\theoremstyle{plain}
\newtheorem{theorem}{Theorem}[section]
\newtheorem{lemma}[theorem]{Lemma}
\newtheorem{proposition}[theorem]{Proposition} 
\newtheorem{corollary}[theorem]{Corollary}
\newtheorem{observation}[theorem]{Observation}
\theoremstyle{remark}
\newtheorem{remark}[theorem]{Remark}
\theoremstyle{definition}
\newtheorem{definition}[theorem]{Definition} 
\newtheorem{notation}[theorem]{Notation} 
\newtheorem*{assumption}{Assumption} 
\newtheorem{example}[theorem]{Example}
\newcommand{\C}{\mathbf{C}}
\newcommand{\Q}{\mathbf{Q}}
\newcommand{\Z}{\mathbf{Z}}
\newcommand{\Ps}{\mathbf{P}}
\newcommand{\Aoldone}{A2}
\newcommand{\Aoldtwo}{A1}
\newcommand{\Adwa}{A1}
\newcommand{\Ajeden}{A0}
\newcommand{\dedelta}{{\mathfrak{d}}}
\newcommand{\FX}{F}
\newcommand{\fgg}{f_{0}}
\newcommand{\hhh}{f_{1}}
\newcommand{\segcub}{C_{3}}
\newcommand{\Cppp}{C'_{3}}
\newcommand{\Cbbb}{C''_{3}}
\newcommand{\aaa}{a_1}
\newcommand{\bb}{a_2}
\newcommand{\cc}{a_3}
\newcommand{\dd}{a_4}
\newcommand{\ee}{b_3}
\newcommand{\ff}{b_4}
\DeclareMathOperator{\CH}{CH}
\DeclareMathOperator{\sing}{sing}
\DeclareMathOperator{\red}{red}
\DeclareMathOperator{\rank}{rank}
\DeclareMathOperator{\Pic}{Pic}
\DeclareMathOperator{\sat}{sat}
\DeclareMathOperator{\Jac}{Jac}
\numberwithin{equation}{section}
\title{Quintic threefolds with triple points}
\author[R.~Kloosterman]{Remke Kloosterman}
\address{Universit\`a degli Studi di Padova,
Dipartimento di Matematica,
Via Trieste 63,
35121 Padova, Italy} 
\email{klooster@math.unipd.it}
\author[S.~Rams]{Slawomir Rams}
\address{Institute of Mathematics, Jagiellonian University,
 ul.~{\L}ojasiewicza~6, 30-348 Krak\'ow, Poland} 
\email{slawomir.rams@uj.edu.pl}
\date{\today}
\thanks{S.~R. is partially supported  by National Science Centre, Poland, grant 2014/15/B/ST1/02197. 
Part of this research was completed during a visit financed by the programme ``Mobilit\`a docenti" of the University of Padova}
\subjclass{}
\begin{document}

\begin{abstract}
We study the geometry of quintic threefolds  $X\subset \Ps^4$  with only ordinary triple points as singularities. In particular, we show that if a quintic threefold $X$  has a reducible hyperplane section then $X$ has at most $10$ ordinary triple points, and that this bound is sharp.

We construct various examples of quintic threefolds with triple points and discuss their defect. 
\end{abstract}

\maketitle

\section{Introduction}\label{secIntro}

The main aim of this note is to study three-dimensional quintic hypersurfaces with ordinary triple points as singularities. The interest in quintic threefolds with isolated singularities can be partially justified by the fact that their canonical class is trivial. That is why there is extensive literature on three-dimensional nodal quintics (see Meyer's book \cite{MeyBook}) and their small resolutions. Hardly anything is known on quintics with higher singularities.

Let $X\subset \Ps^4$ be a quintic hypersurface with only ordinary triple points as singularities. Then the blow-up of $X$ at its triple points is a Calabi-Yau threefold. 
One can show (see Corollary~\ref{cor-def10}) that if $X$ has at least 10 triple points, then $X$  is not $\Q$-factorial, i.e., $X$ contains several  Weil divisors which are not $\Q$-Cartier. 
We investigate which implications the existence of such Weil divisors has on the geometry of $X$. In particular, we show the following theorem:

\begin{theorem} \label{thm-no-eleven}
 Let $X$ be a quintic threefold with only ordinary triple points as singularities. If a hyperplane section of  $X$ is reducible, then $X$ has at most 10 singular points.
\end{theorem}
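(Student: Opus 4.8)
The plan is to argue by contradiction: suppose $X$ has at least $11$ ordinary triple points and a reducible hyperplane section $H \cap X = S_1 \cup S_2$, where $\deg S_1 = d$ and $\deg S_2 = 5-d$ with $1 \le d \le 2$. The key input is Corollary~\ref{cor-def10}, which tells us that $X$ is not $\Q$-factorial and in fact carries Weil divisors that are not $\Q$-Cartier; the surfaces $S_1, S_2 \subset X$ are natural candidates for such divisors. First I would distribute the $11$ triple points between $S_1$ and $S_2$, noting that every singular point of $X$ lying on $H$ must lie on $S_1 \cap S_2$ (a generic point of $S_i$ is a smooth point of the surface and, since $X$ is smooth away from its triple points, a smooth point of $X$), so a triple point of $X$ on $H$ forces $S_1$ and $S_2$ to meet badly there. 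Then I would bound, in terms of the degrees of $S_1$ and $S_2$ and the Picard/defect data, how many such points can occur.

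**The degree and intersection-theoretic bookkeeping**

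The main computation I expect to carry out is a comparison on the surface $S_1$ (say the one of smaller or equal degree $d$, so $d \in \{1,2\}$): the curve $C = S_1 \cap S_2$ has degree $d(5-d)$ in $\Ps^4$, hence degree $d(5-d)$ as a divisor on $S_1 \subset \Ps^3 = H$. The triple points of $X$ that lie on $H$ are singular points of the curve $C$, and more importantly they impose conditions coming from the local structure: near an ordinary triple point of $X$, the hyperplane section $H$ (which must pass through the point) cuts out a surface with at worst an ordinary triple point, and $S_1 \cup S_2$ has a point of multiplicity $\ge 3$ there, so $\mathrm{mult}_p S_1 + \mathrm{mult}_p S_2 \ge 3$ along with the reducibility. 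Counting with the arithmetic genus of $C$ on each $S_i$, or alternatively using that each such $p$ contributes at least $1$ (or $2$) to $C \cdot C$ computed two ways, should yield an inequality of the form (number of triple points on $H$) $\le f(d)$ with $f(1), f(2) \le 10$. The case $d=1$ — where $S_1$ is a plane — is the cleanest: a plane meets $X$ in a plane quintic curve, and the triple points of $X$ on that plane are triple points of a plane quintic, of which there can be at most... here one uses that a plane quintic with too many triple points degenerates.

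**Handling points off the hyperplane and the main obstacle**

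The subtle point is that $X$ may also have triple points \emph{not} lying on $H$; the reducibility of $H\cap X$ constrains these only indirectly, through the global geometry (e.g.\ via the defect, which by Corollary~\ref{cor-def10} jumps once we pass $10$ points). So I would not try to control the off-$H$ points directly; instead I would feed the existence of the non-$\Q$-Cartier divisors $S_1, S_2$ back into a defect computation: the class of $S_1$ in the divisor class group, modulo the Cartier divisors (hyperplane classes), is nontrivial precisely because $S_1$ fails to be $\Q$-Cartier, and this failure is detected at the triple points lying on $S_1$. Combining the Cayley--Bacharach / syzygy-type constraints that govern when triple points impose independent conditions on quintics — which is presumably the content of the machinery built up before Corollary~\ref{cor-def10} — with the degree bound on $C = S_1 \cap S_2$ should close the gap. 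The main obstacle I anticipate is precisely this step: showing that having $11$ triple points, some possibly off $H$, is incompatible with one of the two components being a non-$\Q$-Cartier Weil divisor of such small degree; this requires knowing that the triple points on $S_1$ alone already force enough defect (or enough conditions on the quintic) to contradict the count, and getting the numerics to land exactly at the threshold $10$ rather than, say, $9$ or $12$ is where the sharpness of the theorem — and hence the delicacy of the argument — really bites.
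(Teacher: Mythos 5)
Your proposal correctly identifies the starting point (a reducible hyperplane section forces a low-degree surface, in fact a $2$-plane, on $X$, and a hyperplane can only carry a bounded number of the triple points), but it has a genuine gap at exactly the step you flag as the ``main obstacle'': controlling the triple points that do \emph{not} lie on $H$. The substitute you propose --- feeding the non-$\Q$-Cartier classes of $S_1,S_2$ back into a defect computation --- cannot close this gap, because the defect inequality of Corollary~\ref{cor-def10} gives only $\delta(X)\geq 11t-101$, i.e.\ $\delta\geq 20$ for $t=11$, and a large defect is not by itself a contradiction; the paper explicitly records that a hypothetical $11$-point quintic would simply have $\delta\geq 20$ and many Weil divisors. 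Nothing in the Cayley--Bacharach/Hilbert-function machinery of Section~\ref{secInv} rules out $t=11$ numerically, so an argument confined to $H$ plus defect bookkeeping cannot reach the conclusion. (There is also a smaller slip: when $S_1$ is a plane it is \emph{contained} in $X$, so it does not meet $X$ in a plane quintic curve; the relevant bound is that a plane in $X$ carries exactly four triple points, Proposition~\ref{prpBasic}.\ref{p31-four}.)

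The paper's route to the off-$H$ points is quite different and is the real content of the proof. One first extracts a $2$-plane $\Pi_0\subset X$ (Proposition~\ref{prpBasic}.\ref{p31-eight}) and studies the pencil of hyperplanes through $\Pi_0$, i.e.\ the fibration $\pi\colon X'\to\Ps^1$ by residual quartic surfaces $Q(H)$. An Euler-characteristic count over this pencil (equations \eqref{eq-euler-char-bl-plane-one}--\eqref{eq-playing-with-numbers}, together with the classification of degenerate fibers in Section~\ref{secFib}) shows that $t=11$ forces at least three fibers to break up further into $2$-planes and quadrics (Proposition~\ref{corPlanes}). Iterating projections from these additional $2$-planes pins down the coordinates of the singular points until $10$ of the $11$ form the Segre configuration (Proposition~\ref{prop-segreconf}), and a final explicit computation with the Segre cubic shows an eleventh ordinary triple point cannot exist (Lemma~\ref{lem-no11}). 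Your sketch contains none of this global fibration/configuration analysis, so as it stands it is a plausible opening move rather than a proof.
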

Moreover, we exhibit several examples of quintic threefolds with 10 ordinary triple points, each of which contains a $2$-plane. Thus Theorem~\ref{thm-no-eleven} gives a  sharp bound
(for the discussion of the same bound under weaker assumptions - see Remark~\ref{remark-strongerclaim}).

We are not aware of any example of a quintic threefold with  only ordinary triple points as singularities which is not $\Q$-factorial and contains no $2$-planes.

To prove Theorem~\ref{thm-no-eleven} we show  that  the quintic $X$ contains a $2$-plane $\Pi_0$ (Proposition~\ref{prpBasic}.\ref{p31-eight}). Then we study the geometry of the projection from 
 $\Pi_0$.
We show that if $X$ has 11 singularities  then the projection has at least three reducible fibers such that  their Zariski-closures contain additional $2$-planes (Proposition~\ref{corPlanes}). Then
we study  the projections from those extra $2$-planes to show that 10 of the 11 triple points form the so-called Segre configuration (Proposition~\ref{prop-segreconf}). 
Finally, we prove that a quintic threefold with ordinary triple points at the Segre configuration cannot have an 11th ordinary triple point (Lemma~\ref{lem-no11}). 

In general it is unknown what is the maximal number of isolated singularities of a given type on a quintic threefold. For $A_1$ singularities the best examples were given by Hirzebruch and  Van Straten (see \cite{DucoQuintic})
and the best upper bound results from  Varchenko's spectral bound \cite{VarSpec}. 
Moreover, \cite{VarSpec} implies that there are no quintic threefolds  with more than 11 ordinary triple points. As a by-product our investigation shows that a hypothetical quintic threefold $X$  with 11 triple points contains no low-degree surfaces ($2$-planes, quadrics and cubics) but its defect is at least 20 (see Corollary~\ref{cor-def10}), which means that it contains many Weil divisors 
 (i.e.,  $\rank \CH^{1}(X)/\Pic(X) \geq  20$). 

The organisation of this paper is as follows:
In Section~\ref{secInv} we determine the main numerical invariants of a quintic hypersurface $X \subset \Ps^4_{\C}$ with ordinary triple points. Section~\ref{secBas} contains various  restrictions on the position of triple points on the quintic $X$. In Section~\ref{secPlane} we study the geometry of the quintic threefold $X$ (resp. its hyperplane sections) under the additional assumption that $X$ contains a $2$-plane $\Pi_0$.  
Section~\ref{secSegre} is devoted to  a hypothetical quintic $X$  with 11 triple points that contains a $2$-plane. We use the results from the previous section to conclude that there are at least five further $2$-planes on $X$.
The projections from the five planes enable us to prove that 10 of the 11 triple points form the so-called Segre configuration.
In Section~\ref{sec11} we show that a quintic threefold with 10 ordinary triple points, which form the Segre configuration cannot have an 11th ordinary triple point.
Section~\ref{secExa} contains various examples: we discuss the possible values for the defect $\delta(X)$ for a quintic threefold $X$ with at most 6 triple points, we give various examples with 7, 8 and 9 triple points and construct three families of quintic threefolds with 10 triple points and distinct values for the defect. \\
The considerations of Sections~\ref{secPlane} and~\ref{secSegre} are partially based on the (elementary and tedious)  study of possible degenerate fibers 
of the resolution of the projection from the plane $\Pi_0$.  In Section~\ref{secFib} we examine the degenerate fibers. 
Finally in Appendix~\ref{appPen} we prove some elementary results on pencils of quartic curves that we need in Section~\ref{secFib}. \newline

\section{Invariants of quintic threefolds}\label{secInv}

Let  $X \subset \Ps^4_{\C}$ be a quintic hypersurface. 
In this note we work under the following assumption

\begin{assumption} {\bf [\Ajeden]}  
The complex quintic threefold   $X$ is assumed to have  \emph{only finitely many singularities} $P_1$, $\ldots$, $P_t$ all of which
are \emph{ordinary triple points}. 
\end{assumption}

Recall that a triple point $P \in X$ is called   an ordinary triple point if and only if the  tangent cone of $X$ at $P$ is a cone over a smooth cubic surface.

The Milnor number of an ordinary triple point is 16, whereas its Tjurina number is either 15 or 16. 
A smooth quintic threefold has Euler number $(-200)$.  Therefore, by \cite[Corollary 5.4.4]{Dim}, we have 
\begin{equation}   \label{eq-euler-char-singular}
\chi(X) = -200+16t.
\end{equation}

\begin{definition} Let $Y\subset \Ps^4$ be a hypersurface with isolated singularities. We call the integer \[\delta := \delta(Y) := h^4(Y)-1\]
the \emph{defect} of $Y$.
\end{definition}

The Lefschetz theorem \cite[Theorem 5.2.6]{Dim} yields that $h^i(X,\C) = h^i(\Ps^4,\C)$ for $i < 3$. The same equality holds for $i=5,6$ by  \cite[Theorem 5.2.11]{Dim}.
Thus we have that $H^i(X,\C) = 0$ for $i \neq  0,2,3,4,6$ and $b_i(X)=1$ for $i=0,2,6$. By definition of defect we have $b_4(X)=1+\delta$.
Finally,  \eqref{eq-euler-char-singular} yields $b_3(X)= 204 + \delta - 16t$.
 Moreover, the isomorphisms in the Lefschetz theorem preserve 
the mixed Hodge structure on $H^i(X,\C)$, so we have 
$H^{2k}(X,\C)=\C(-k)$ for  $k=0,1$ (i.e. $H^{2}(X,\C)=H^{1,1}(X)= \C$). 

We put $\tilde{X}$ (resp. $E$)  to denote the blow-up of  $X$ along $\sing(X)$ (resp. its exceptional divisor) 
and apply the standard Leray spectral sequence to see that $h^i({\mathcal O}_{\tilde X}) = 0$ for $i =1, 2$.
One can easily check that  $\tilde{X}$ is smooth and each point $P_j \in \sing(X)$ is replaced by a smooth cubic surface on $\tilde{X}$. 
Hence $\chi(\tilde{X})=-200+24t$.

Thanks to the vanishing of certain Betti numbers of $X$ and  $E$, the long exact sequence  
\cite[Corollary-Definition 5.37]{PetersSte} yields the exact sequence:
\begin{equation} \label{eq-lsshort}
0 \longrightarrow H^4(X) \longrightarrow H^4(\tilde{X}) \longrightarrow H^4(E) \longrightarrow 0 
\end{equation}
of mixed Hodge structures. 
By standard arguments we have  $H^4(\tilde{X})= H^{2,2}(\tilde{X}) \simeq H^{1,1}(\tilde{X})$, the other Hodge numbers $h^{p,q}(\tilde{X})$ vanish for $p+q=4$, whereas $H^4(E)= H^{2,2}(E) \simeq  \C^t$.
Thus, by \eqref{eq-lsshort}  the only non-zero Hodge number of $H^4(X)$ is $h^{2,2}$ and we have the equality:
\begin{equation} \label{eq-deltaonetwo}
1+\delta = h^{2,2}(H^4(X)) = h^{2,2}(\tilde{X}) - h^{2,2}(E) =  h^{2,2}(\tilde{X}) - t
\end{equation}

Our considerations yield the following proposition.
\begin{proposition} \label{prop-hodgetildeX} The following equalities hold
$$
h^{3,0}(\tilde{X})=1, \quad h^{2,1}(\tilde{X})=101-11t+\delta \mbox{ and } h^{1,1}(\tilde{X})=1+t+\delta.
$$ 
\end{proposition}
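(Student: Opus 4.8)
The plan is to pin down the three Hodge numbers from the two pieces of data already assembled above --- the Euler characteristic $\chi(\tilde{X})=-200+24t$ and equation~\eqref{eq-deltaonetwo} --- together with the vanishing $h^i(\mathcal{O}_{\tilde{X}})=0$ for $i=1,2$ and the triviality of the canonical bundle of $\tilde{X}$. First I would establish $h^{3,0}(\tilde{X})=1$. Blowing up $\Ps^4$ at an ordinary triple point $P_j$ of $X$ produces an exceptional $\Ps^3$ along which the total transform of $X$ vanishes to order $3$, which is also the discrepancy $4-1$ of the ambient blow-up; hence by adjunction the induced morphism $\pi\colon\tilde{X}\to X$ is crepant, $K_{\tilde{X}}=\pi^{*}K_{X}$. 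Since $X$, being a quintic hypersurface in $\Ps^4$, has trivial dualizing sheaf $\omega_{X}\cong\mathcal{O}_{X}$ by adjunction, we get $K_{\tilde{X}}\cong\mathcal{O}_{\tilde{X}}$ and therefore $h^{3,0}(\tilde{X})=h^{0}(\tilde{X},\omega_{\tilde{X}})=h^{0}(\tilde{X},\mathcal{O}_{\tilde{X}})=1$ (equivalently, one invokes directly that $\tilde{X}$ is a Calabi--Yau threefold).

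Next, equation~\eqref{eq-deltaonetwo} reads $h^{2,2}(\tilde{X})=1+t+\delta$, and Hodge symmetry $h^{p,q}(\tilde{X})=h^{3-p,3-q}(\tilde{X})$ on the smooth projective threefold $\tilde{X}$ gives $h^{1,1}(\tilde{X})=h^{2,2}(\tilde{X})=1+t+\delta$, which is the second asserted equality.

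Finally I would recover $h^{2,1}(\tilde{X})$ from the Euler characteristic. Since $h^{1}(\mathcal{O}_{\tilde{X}})=h^{2}(\mathcal{O}_{\tilde{X}})=0$, Serre duality gives $b_{1}(\tilde{X})=b_{5}(\tilde{X})=0$ and $h^{3,1}(\tilde{X})=h^{1,3}(\tilde{X})=0$; as already noted $H^{4}(\tilde{X})=H^{2,2}(\tilde{X})$, so $b_{2}(\tilde{X})=b_{4}(\tilde{X})=h^{1,1}(\tilde{X})=1+t+\delta$, while $b_{0}(\tilde{X})=b_{6}(\tilde{X})=1$ and, using $h^{3,0}(\tilde{X})=1$, $b_{3}(\tilde{X})=2\bigl(h^{3,0}(\tilde{X})+h^{2,1}(\tilde{X})\bigr)=2+2h^{2,1}(\tilde{X})$. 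Substituting into $\chi(\tilde{X})=b_{0}-b_{1}+b_{2}-b_{3}+b_{4}-b_{5}+b_{6}=-200+24t$ yields $2+2t+2\delta-2h^{2,1}(\tilde{X})=-200+24t$, hence $h^{2,1}(\tilde{X})=101-11t+\delta$.

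The computation is essentially bookkeeping once $h^{3,0}(\tilde{X})=1$ is known; the only point requiring genuine care is exactly that input --- the crepancy of the resolution at an ordinary triple point, needed to separate $h^{3,0}$ from $h^{2,1}$ in the Euler-characteristic relation. This is a local adjunction computation on the blow-up of $\Ps^4$ at a point, and it uses in an essential way that the multiplicity $3$ of the singularity coincides with the discrepancy $4-1$ of that blow-up (the hypothesis that the triple point is \emph{ordinary} then only enters to guarantee that $\tilde{X}$ is smooth, the exceptional divisor being a smooth cubic surface).
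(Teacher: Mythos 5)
Your proposal is correct and follows essentially the same route the paper intends: the paper gives no separate proof (it states that "our considerations yield" the proposition), and those considerations are precisely the ingredients you use, namely $K_{\tilde X}\cong\mathcal{O}_{\tilde X}$ (the paper asserts in the introduction that $\tilde X$ is Calabi--Yau, which you justify by the crepancy computation), equation~\eqref{eq-deltaonetwo} giving $h^{2,2}(\tilde X)=1+t+\delta$ together with Hodge symmetry, and the Euler characteristic $\chi(\tilde X)=-200+24t$ combined with the vanishing $h^i(\mathcal{O}_{\tilde X})=0$ for $i=1,2$. The bookkeeping is accurate throughout.
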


To compute the Hodge numbers of $H^3(X)$ observe that from \cite[Corollary-Definition 5.37]{PetersSte} one obtains the following exact sequence of mixed Hodge structures
\begin{equation*}
0 \longrightarrow H^2(X) \longrightarrow  H^2(\tilde{X})  \longrightarrow H^2(E)   
\longrightarrow H^3(X) \longrightarrow  H^3(\tilde{X})  \longrightarrow 0    \, .
\end{equation*}
Moreover, the Hodge structure on  $H^j(E)$ can be easily determined, so Proposition~\ref{prop-hodgetildeX} and \eqref{eq-deltaonetwo}
imply the following proposition.
\begin{proposition} \label{prp-formulae}
For $k=0,1,3$ we have that $H^{2k}(X,\C)=\C(-k)$, whereas  $H^4(X,\C)\cong \C(-2)^{1+\delta}$.
Moreover, the following equalities hold
\[  h^{0,3}(H^3(X))=h^{3,0}(H^3(X))=1; h^{1,2}(H^3(X))=h^{2,1}(H^3(X))=101-11t+\delta\]
and $h^{1,1}(H^3(X))=6t-\delta$.
All other Hodge numbers vanish.
\end{proposition}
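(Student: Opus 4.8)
The plan is to compute the Hodge numbers of $H^3(X)$ by a diagram chase on the exact sequence of mixed Hodge structures
\[
0 \longrightarrow H^2(X) \longrightarrow H^2(\tilde X) \longrightarrow H^2(E) \longrightarrow H^3(X) \longrightarrow H^3(\tilde X) \longrightarrow 0,
\]
reading off the contribution of each graded piece separately. First I would record the input data: from the Lefschetz-type vanishing established above, $H^2(X,\C)=\C(-1)$ is pure of type $(1,1)$; from Proposition~\ref{prop-hodgetildeX}, $H^3(\tilde X)$ has Hodge numbers $h^{3,0}=h^{0,3}=1$ and $h^{2,1}=h^{1,2}=101-11t+\delta$, while $H^2(\tilde X)=H^{1,1}(\tilde X)$ is pure $(1,1)$ of dimension $1+t+\delta$. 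Finally $E$ is a disjoint union of $t$ smooth cubic surfaces, each with $h^{1,1}=7$ and all other $h^{p,q}$ in degree $2$ vanishing, so $H^2(E)=H^{1,1}(E)\cong\C^{7t}$ is pure of type $(1,1)$.

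Next I would use strictness of morphisms of mixed Hodge structures to split the six-term sequence into its $(p,q)$-components. Since $H^2(X)$, $H^2(\tilde X)$ and $H^2(E)$ are all of pure type $(1,1)$, the only Hodge piece of $H^3(X)$ that can receive a nonzero image from $H^2(E)$ is the $(1,1)$-part; consequently the map $H^3(X)\to H^3(\tilde X)$ is an isomorphism on every Hodge component $(p,q)$ with $(p,q)\neq(1,1)$. This immediately gives $h^{0,3}(H^3(X))=h^{3,0}(H^3(X))=1$ and $h^{1,2}(H^3(X))=h^{2,1}(H^3(X))=101-11t+\delta$, and shows all Hodge numbers of $H^3(X)$ outside $(1,1)$, $(2,1)$, $(1,2)$, $(0,3)$, $(3,0)$ vanish. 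For the $(1,1)$-part I would take Euler characteristics (alternating sums of dimensions) along the exact sequence in the $(1,1)$-graded piece: the dimensions are $1$ for $H^2(X)$, $1+t+\delta$ for $H^2(\tilde X)$, $7t$ for $H^2(E)$, an unknown $a$ for the $(1,1)$-part of $H^3(X)$, and $101-11t+\delta$ for the $(1,1)$-part of $H^3(\tilde X)$; exactness forces the alternating sum to vanish, giving
\[
a = 7t - (1+t+\delta) + 1 = 6t-\delta,
\]
which is the claimed value $h^{1,1}(H^3(X))=6t-\delta$. The statement $H^{2k}(X,\C)=\C(-k)$ for $k=0,1,3$ and $H^4(X,\C)\cong\C(-2)^{1+\delta}$ is just a restatement of the vanishing results and \eqref{eq-deltaonetwo} already derived, together with the observation that the weight on $H^3(X)$ in its $(p,q)$ summands with $p+q=3$ is forced by purity to be $3$, i.e. these are Tate twists of the right weight.

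The only genuine subtlety — and the point I would be most careful about — is justifying that the connecting morphism $H^2(E)\to H^3(X)$, although it raises cohomological degree by one, is a morphism of mixed Hodge structures that is strict for the Hodge filtration, so that the graded decomposition of the six-term sequence is again exact in each $(p,q)$-slot. This is exactly the content of the functoriality and strictness statements in \cite[Corollary-Definition 5.37]{PetersSte} (the Mayer--Vietoris / blow-up sequence is a sequence of MHS, and morphisms of MHS are strictly compatible with $F^{\bullet}$ and $W_{\bullet}$), so once that is invoked the rest is bookkeeping. I would also note in passing that $H^2(E)$ carries weight $2$ and $H^3(X)$ has weights $\le 3$, with the image landing in the weight-$2$ part, which is consistent with the $(1,1)$-piece absorbing the entire cokernel of $H^2(\tilde X)\to H^2(E)$; no cancellation in other weights can occur. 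After that, the three displayed equalities and the vanishing of the remaining Hodge numbers follow immediately.
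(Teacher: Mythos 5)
Your proof follows the same route as the paper: the paper's own argument is precisely the diagram chase on the six-term exact sequence of mixed Hodge structures from \cite[Corollary-Definition 5.37]{PetersSte}, combined with Proposition~\ref{prop-hodgetildeX}, \eqref{eq-deltaonetwo}, and the (easily determined) Hodge structure of $H^2(E)\cong\C(-1)^{7t}$, and your strictness argument and the alternating-sum computation in the $(1,1)$-slot are exactly the intended bookkeeping.

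One slip to correct: you list the dimension of the $(1,1)$-part of $H^3(\tilde X)$ as $101-11t+\delta$, which is the $(2,1)$-Hodge number; since $\tilde X$ is smooth projective, $H^3(\tilde X)$ is pure of weight $3$ and its $(1,1)$-part is $0$. Your displayed formula $a=7t-(1+t+\delta)+1$ tacitly uses the correct value $0$ (with $101-11t+\delta$ in that slot the alternating sum would give $a=101-5t$, not $6t-\delta$), so the conclusion stands, but as written the listed datum contradicts the computation and should be fixed.
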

As a consequence of the non-negativity of  $h^{2,1}(H^3(X))$ and Proposition~\ref{prp-formulae}  we obtain the following corollary (c.f. Example~\ref{example-sevthree}). 
\begin{corollary} \label{cor-def10} If $X$ satisfies {\rm  [\Ajeden]}, then  $\delta(X) \geq 11t - 101$.
\end{corollary}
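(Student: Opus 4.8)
The plan is to read the inequality straight off the Hodge-theoretic bookkeeping of Section~\ref{secInv}. By Proposition~\ref{prp-formulae} the mixed Hodge structure on $H^3(X)$ satisfies
$$ h^{2,1}(H^3(X)) = 101 - 11t + \delta. $$
Since $h^{2,1}(H^3(X))$ is, by definition, the dimension of a graded piece of $H^3(X,\C)$ for the Hodge and weight filtrations, it is a non-negative integer. Hence $101 - 11t + \delta \geq 0$, which is precisely the asserted bound $\delta(X) \geq 11t - 101$.

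The one point worth a remark is that $101-11t+\delta$ is a \emph{genuine} Hodge number, not merely a formal alternating combination of such: this is exactly what Proposition~\ref{prp-formulae} delivers, as it is derived from the exact sequences of mixed Hodge structures provided by \cite[Corollary-Definition 5.37]{PetersSte}, every term of which carries an honest (mixed) Hodge structure. Equivalently, one may invoke the non-negativity of $h^{2,1}(\tilde{X})$ from Proposition~\ref{prop-hodgetildeX} — $\tilde{X}$ being a smooth projective threefold — since that quantity also equals $101-11t+\delta$; or one may argue with $h^{1,2}(H^3(X))$ in place of $h^{2,1}(H^3(X))$. All three routes yield the same inequality.

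There is essentially no obstacle here: the substantive work has already been carried out in computing the Hodge numbers of $\tilde{X}$ and of $H^3(X)$, and Corollary~\ref{cor-def10} is just the observation that a dimension cannot be negative — which converts the Euler-characteristic identity \eqref{eq-euler-char-singular} together with the defect bookkeeping of \eqref{eq-deltaonetwo} into an effective lower bound for $\delta(X)$ in terms of the number $t$ of triple points.
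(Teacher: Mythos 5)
Your proposal is correct and is exactly the paper's argument: the corollary is stated as "a consequence of the non-negativity of $h^{2,1}(H^3(X))$ and Proposition~\ref{prp-formulae}", i.e., $101-11t+\delta = h^{2,1}(H^3(X)) \geq 0$. Your additional remarks on why this is a genuine (non-negative) Hodge number, and the equivalent route via $h^{2,1}(\tilde{X})$, are consistent with the paper's setup and add nothing that changes the method.
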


Hence to determine the Hodge numbers of both $X$ and $\tilde{X}$ it suffices to determine $\delta$. 
Observe, that  \eqref{eq-deltaonetwo} combined with \cite[Corollary~6]{CynkTriple} implies  that the defect $\delta$ coincides with the integer that is introduced in   \cite[Definition~2]{CynkTriple}. 
\begin{proposition}\label{prpDefectFormula}
Let $G\in \C[x_0,\dots,x_4]_d$ be such that $Y:=V(G)$ is a hypersurface with only ordinary triple points at $P_1,\dots,P_t$. If $J$ is the ideal 
\begin{equation} \label{eq-def-ideal}
J :=  ((\cap_{i=1}^t I(P_i)^ 3)+\Jac(G))^{\sat}.
\end{equation}
then the following equality holds 
\[ \delta= 11t - h_J(2d-5), \] 
where $h_J(\dot)$ is the Hilbert function of $J$.
\end{proposition}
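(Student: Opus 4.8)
The plan is to translate the defect, as defined via the mixed Hodge structure on $H^4(X)$, into the language of the graded ideal $J$ and its Hilbert function, passing through the explicit comparison of $\delta$ with the invariant of \cite[Definition~2]{CynkTriple} that was already noted right before the statement. Concretely, the equality \eqref{eq-deltaonetwo} together with \cite[Corollary~6]{CynkTriple} identifies our $\delta$ with the integer of \cite[Definition~2]{CynkTriple}; so it suffices to show that the latter equals $11t - h_J(2d-5)$. First I would recall the setup from \cite{CynkTriple}: for a hypersurface $Y = V(G)$ of degree $d$ with ordinary triple points, one considers the homogeneous ideal generated by the partials of $G$ (the Jacobian ideal) together with the cube of the ideal $\cap_i I(P_i)$ of the singular locus, and the defect-type invariant is read off from the failure of this ideal to impose independent conditions in a suitable degree.

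Next I would set $R = \C[x_0,\dots,x_4]$ and analyse the ideal $\tilde J := (\cap_{i=1}^t I(P_i)^3) + \Jac(G)$, then its saturation $J = \tilde J^{\mathrm{sat}}$. The degree in which the comparison takes place is $2d - 5$ (for $d = 5$ this is degree $5$); the factor $11$ per point reflects that an ordinary triple point, whose local ring is the cone over a smooth cubic surface, contributes $11 = \dim_\C (R/I(P)^3)_{\mathrm{small}}$-type data — more precisely, the codimension of $(\cap_i I(P_i)^3)$-imposed conditions, counted with the $h^{2,2}$-bookkeeping from \eqref{eq-deltaonetwo}. I would make precise that $h^{2,2}(\tilde X) - t = 1 + \delta$ matches $h^{2,2}$ computed via the symmetrizer/Jacobian ring description from \cite{CynkTriple}, so that $\delta = 11t - h_J(2d-5)$ is exactly the restatement of \eqref{eq-deltaonetwo} under the dictionary of \cite[Corollary~6]{CynkTriple}. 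The saturation is needed because only the saturated ideal has a Hilbert function that stabilizes to the Hilbert polynomial and correctly counts conditions in the relevant degree; unsaturated defects in low degree would spoil the count, and $2d-5$ is chosen large enough relative to the regularity that $h_J$ and $h_{\tilde J}$ agree there anyway — a point I would flag but not belabor.

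The key steps, in order: (1) recall the definition of the invariant in \cite[Definition~2]{CynkTriple} and the comparison isomorphism \cite[Corollary~6]{CynkTriple}; (2) invoke \eqref{eq-deltaonetwo}, which gives $1 + \delta = h^{2,2}(\tilde X) - t$, to identify our $\delta$ with that invariant; (3) unwind the definition of that invariant as $11t$ minus the value at $2d-5$ of the Hilbert function of the ideal \eqref{eq-def-ideal}; (4) observe that saturation does not change the Hilbert function in degree $2d-5$ (or simply use the saturated ideal throughout, as in the statement), yielding $\delta = 11t - h_J(2d-5)$.

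The main obstacle I expect is step (3): making the bridge between the Hodge-theoretic defect and the purely commutative-algebra quantity $h_J(2d-5)$ fully rigorous, i.e., checking that the "$11t$" normalization and the choice of degree $2d-5$ in \cite{CynkTriple} are exactly compatible with our normalization $\delta = h^4(X) - 1$ and with the exact sequence \eqref{eq-lsshort}. This is essentially a careful reading and transcription of \cite{CynkTriple} rather than new mathematics, but the constants must be matched with care — in particular verifying that an ordinary triple point (cone over a smooth cubic) contributes precisely $11$, not some nearby value, which is where the smoothness of the cubic tangent cone (as opposed to a general triple point) enters, consistently with Proposition~\ref{prp-formulae} where the same "$11t$" and "$6t$" coefficients appear.
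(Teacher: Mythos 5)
Your overall strategy---reduce everything to \cite[Corollary~6]{CynkTriple} via the identification of $\delta$ with Cynk's invariant, which the paper has already set up through \eqref{eq-deltaonetwo}---is the same as the paper's. But there is a genuine gap at your step (3). Cynk's invariant is \emph{not} defined in terms of the global ideal $J$ of \eqref{eq-def-ideal}: it is defined via an ideal $J'=\cap_{P\in\sing(Y)} I_P$, where each $I_P$ is generated by ${\mathfrak m}_P^3$ together with the partial derivatives of a \emph{local} equation $f_P$ of $Y$ at $P$. The entire mathematical content of the proposition is the identity $J=J'$, i.e.\ the passage from these locally defined ideals to the globally defined ideal built from $\Jac(G)$. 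Your proposal assumes this identification from the outset ("one considers the homogeneous ideal generated by the partials of $G$ \dots"), which begs the question; and your stated worry about matching the constants $11$ and the degree $2d-5$ is not where the difficulty lies.

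The missing argument runs as follows: localize $J$ at a triple point $P$; among the five partials of $G$, four generate locally the same ideal as the partials of $f_P$, and the Euler relation shows that the ideal generated by all five partials of $G$ coincides with the ideal generated by $f_P$ and its four local partials; since $f_P\in{\mathfrak m}_P^3$, the term $f_P$ is absorbed into ${\mathfrak m}_P^3$, so the localization of $J$ at $P$ equals $I_P$. As both $J$ and $J'$ are saturated and define the same zero-dimensional scheme, they are equal, and since $J'$ has length $11t$ (each $I_P$ has colength $11$, using that the tangent cone is a cone over a \emph{smooth} cubic, so the four partials of the cubic are linearly independent quadrics), the defect in degree $2d-5$ is $11t-h_J(2d-5)$. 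Your step (4) is also slightly off target: the saturation is not a regularity issue to be waved away but is exactly what lets one conclude $J=J'$ from equality of the associated schemes.
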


\begin{proof}
By \cite[Corollary~6]{CynkTriple}
the integer $\delta$ is the defect in degree $(2d-5)$ of the ideal $J'$ that can be constructed as follows: \\
Let $P$ be an ordinary triple point of $Y$ and  let $f_P(z_1,z_2,z_3,z_4)=0$ be a local equation of $Y$ around $P$. 
We put ${\mathfrak m}_P$ to the denote the ideal of the point $P$ and
define $\mbox{I}_P$ as the ideal  generated by ${\mathfrak m}_P^3$ and all partials of $f_P$. Finally, we identify $\mbox{I}_P$ with its pullback to $\C[x_0,\dots,x_4]$ and define  $J':=\cap_{P  \in \sing(Y)} \mbox{I}_P$. By construction, $J'$ is a saturated ideal of length  $11t$.
 
We claim that $J' = J$. Indeed, observe that the localization of $J$ at $P$ is  generated by ${\mathfrak m}_P^3$ and the $(n+1)$ partial derivatives of $G$.
 Among these derivatives there are $n$ which locally generate the same ideal as the partials of $f_P$. Using the Euler relation for $G$ we see that the ideal generated by  its $n+1$ partials coincides with  
 the ideal generated by $f_P$ and its $n$ partial derivatives. 
By assumption, we have  $f_P\in {\mathfrak m}_P^3$, so both $I_P$ and the localization of $J$ at $P$ coincide. Hence $J$ and $J'$ are saturated ideals that define the same scheme and therefore they are equal.
\end{proof}

Finally, we use Varchenko's spectral bound \cite{VarSpec} to bound the number of triple points of $X$.

\begin{lemma} The number of triple points of $X$ cannot exceed  11.\label{lemSpec}
\end{lemma}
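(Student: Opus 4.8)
The plan is to apply Varchenko's spectral semicontinuity bound directly to the quintic $X$. Recall that Varchenko's theorem gives, for a hypersurface $Y \subset \Ps^n$ of degree $d$ with isolated singularities, an upper bound on the number of singularities of a fixed local type in terms of the spectrum of the singularity and the number of monomials of appropriate degree. Concretely, if $\sigma$ denotes the spectral numbers of the ordinary triple point singularity $z_1^3 + z_2^3 + z_3^3 + z_4^3 = 0$ (there are $16$ of them, since the Milnor number is $16$), then for each half-open interval $(\alpha, \alpha+1]$ the number of singularities of $X$ times the number of spectral numbers lying in $(\alpha,\alpha+1]$ cannot exceed the number of spectral numbers of the cone singularity $x_0^5 + \dots + x_4^5 = 0$ (whose zero locus is a smooth quintic) lying in the same interval — or more precisely one compares with the relevant graded piece of the Milnor/Jacobian algebra of a generic quintic.

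First I would write down the spectrum of the ordinary triple point. For the Brieskorn–Pham singularity $z_1^3+z_2^3+z_3^3+z_4^3$ the spectral numbers are $\{\tfrac{i_1+i_2+i_3+i_4}{3} : 1 \le i_j \le 2\}$, so they range from $\tfrac{4}{3}$ (once) up to $\tfrac{8}{3}$ (once), symmetric about the center $2$. Then I would write down the spectrum associated to the smooth quintic in $\Ps^4$, i.e.\ the spectral numbers of $x_0^5+\dots+x_4^5$, which are $\{\tfrac{i_0+\dots+i_4}{5} : 1 \le i_j \le 4\}$, lying in $(0,4)$ and symmetric about $2$. Choosing the interval centered symmetrically about $2$ — say $(\tfrac{3}{2}, \tfrac{5}{2}]$ or an even smaller interval around $2$ — one counts how many triple-point spectral numbers fall inside (a value like $5$ or so, for a sufficiently small window) and how many quintic spectral numbers fall inside, and the semicontinuity inequality $t \cdot (\#\{\text{triple spectrum in } I\}) \le \#\{\text{quintic spectrum in } I\}$ forces $t \le 11$. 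The arithmetic is a finite, explicit count; one simply picks the interval that makes the ratio smallest, which will be a narrow interval about the midpoint $2$.

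The main obstacle is purely bookkeeping: choosing the optimal interval $I$ and correctly counting lattice points (i.e.\ the number of $4$-tuples, resp.\ $5$-tuples, of integers in prescribed ranges summing to a value whose scaled version lies in $I$). There is no conceptual difficulty — Varchenko's bound is a black box here — but one must be careful that the interval has length $\le 1$ (as required by the semicontinuity theorem) and that one uses the version of the bound comparing against a smooth hypersurface of the same degree and dimension. Since this is a known computation (it is precisely the observation already attributed to \cite{VarSpec} in the introduction), I would keep the proof short: state the two spectra, exhibit the interval $I = (\tfrac{11}{6}, \tfrac{13}{6}]$ (or whatever turns out optimal), record the two counts, and conclude $11 t \le 121$, hence $t \le 11$.
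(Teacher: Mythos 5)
Your overall strategy -- invoke Varchenko's semicontinuity of the spectrum, count spectral numbers in a unit interval, and divide -- is exactly the strategy of the paper, and your spectrum of the ordinary triple point ($16$ numbers $\tfrac{i_1+\dots+i_4}{3}$, $1\le i_j\le 2$) is correct up to an overall shift of the normalization. But there is a genuine gap in the choice of reference singularity. You propose to compare against the cone singularity $x_0^5+\dots+x_4^5$ in \emph{five} variables (Milnor number $4^5=1024$). Varchenko's theorem only applies when the singularities of $X$ arise as a (lower) deformation of the reference singularity, and the triple points of $X$ are four-variable singularities: they are obtained by the rescaling $F(s,x_1,\dots,x_4)$, $s\to 0$, as a deformation of the \emph{ordinary fivefold point of a threefold}, i.e.\ the four-variable cone $x_1^5+\dots+x_4^5$ over a smooth quintic surface, with Milnor number $4^4=256$. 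There is no deformation from the five-variable cone to a collection of four-variable triple points (the cone over a singular $X$ has non-isolated singularities, so the isolated-singularity semicontinuity theorem is not available there), and spectra of singularities in different numbers of variables live in different ranges and are not directly comparable; your own description already shows the mismatch, since the five-variable spectrum is centered at $5/2$, not at $2$ as you state. With your reference singularity the total-Milnor-number ratio is $1024/16=64$, so even if some inequality held it would be far from $11$.

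The second gap is that the decisive arithmetic is never done: the interval ``$(\tfrac{11}{6},\tfrac{13}{6}]$ or whatever turns out optimal'' and the conclusion ``$11t\le 121$'' are placeholders, not a computation. With the correct reference singularity the paper's count is: in the interval $(\tfrac{2}{5},\tfrac{7}{5})$ (in the normalization where the triple-point spectrum is $(\tfrac13)+4(\tfrac23)+6(1)+4(\tfrac43)+(\tfrac53)$ and the fivefold-point spectrum is $(\tfrac{-1}{5})+4(0)+10(\tfrac15)+20(\tfrac25)+31(\tfrac35)+40(\tfrac45)+44(1)+40(\tfrac65)+\dots$), the fivefold point contributes $31+40+44+40=155$ spectral numbers and each triple point contributes $14$, giving $14t\le 155$, hence $t\le 11$. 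Note that the optimal window is \emph{not} a narrow interval about the center: a short symmetric interval around the center of the triple-point spectrum captures $6$ of its $16$ numbers versus $44$ of the $256$, giving only $6t\le 44+\dots$, which is weaker; the interval must be tuned so that the triple point contributes a large fraction of its spectrum while the fivefold point does not. So both the deformation set-up and the actual count need to be supplied before the argument closes.
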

\begin{proof}
The spectrum of an ordinary triple point is
\[ (\frac{1}{3})+4(\frac{2}{3})+6(1)+4(\frac{4}{3})+(\frac{5}{3})  , \]
whereas the spectrum of an ordinary fivefold point equals
\[ (\frac{-1}{5})+4(0)+10(\frac{1}{5})+20(\frac{2}{5})+31(\frac{3}{5})+40( \frac{4}{5})+44(1)+40(\frac{6}{5})+\dots\]
By \cite[Thickeness Thm]{VarSpec}, 
for any $\alpha$ the interval $(\alpha,\alpha+1)$ is a semincontinuity set for the spectrum (for any arbitrary lower deformation). For $\alpha=2/5$ the spectrum of the fivefold point 
has length $31+40+44+40=155$, whereas the spectrum of the triple point has length $14$ 
in the interval $(2/5,7/5)$. Thus $X$  has at most $\frac{155}{14}=11+\frac{1}{14}$ triple points.
\end{proof}

\section{Basic properties}\label{secBas}
In this section we maintain the notation of Section~\ref{secInv}. In particular, $X$ satisfies [\Ajeden].
Moreover, we put $\FX$ to denote the \emph{generator of the ideal} $\mbox{I}(X) \subset \C[x_0, \ldots, x_4]$.  
The projective subspace of $\Ps^4(\C)$   spanned by $P_1, \ldots, P_k$  is denoted 
by $\langle P_1, \ldots, P_k \rangle$.

Below we collect some basic facts on the configuration of the  triple points of the quintic $X$.

\begin{proposition}\label{prpBasic}
Let $X$ be a quintic threefold with ordinary triple points $P_1$, $\ldots$, $P_t$ as its only singularities. 
\begin{enumerate}
 \item  \label{p31-one} If $j_1 \neq j_2$, then the line 
 $\langle P_{j_1}, P_{j_2} \rangle$  is contained in $X$.
 \item  \label{p31-two}  No three triple points of $X$ are collinear.
 \item \label{p31-three} A two-plane contains at most four triple points of $X$.
 \item \label{p31-four} A two-plane $\Pi$ contains four triple points of $X$ 
 if and only if 
$\Pi \subset X$.
\item \label{p31-six} If $H$ is a hyperplane and  $\sing(X)\cap H = \{P_1, \ldots, P_k\}$,  then  $k \leq 6$. Moreover, if $k=6$ then $X\cap H$ contains a $2$-plane.
\item \label{p31-five} Let $\Pi_1, \Pi_2, \Pi_3 \subset  X$ be distinct 2-planes. Then $\Pi_1\cap \Pi_2\cap \Pi_3$ is finite.

\item \label{p31-seven} If $X$ contains  an irreducible quadric surface $S_2$ then 
precisely 6 triple points of $X$ lie on $S_2$. In particular, $X$ contains a $2$-plane.
\item \label{p31-eight} If $H$ is a hyperplane then $X\cap H$ is reduced. If it is reducible, then it contains at least one  $2$-plane and at most three $2$-planes.
 \end{enumerate}

\end{proposition}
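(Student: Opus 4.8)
The plan is to analyze a hyperplane section $Y := X \cap H$ of a quintic threefold $X$ satisfying [\Ajeden], using the earlier parts of the proposition as the main tools. Write $Y = \sum m_i Z_i$ with $Z_i \subset H \cong \Ps^3$ irreducible of degree $d_i$ and multiplicity $m_i$, so that $\sum m_i d_i = 5$.

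First I would prove that $Y$ is reduced, i.e.\ every $m_i = 1$. Suppose some component $Z$ appears with multiplicity $m \geq 2$. Then $Z$ is contained in the singular locus of $Y$; but $\sing(Y) \subseteq \sing(X) \cap H \cup (X \cap \sing(H)) = \sing(X) \cap H$ is a \emph{finite} set (as $H$ is a hyperplane, it is smooth, and $\sing(X)$ is finite by [\Ajeden]). Hence $Z$ would be a positive-dimensional component of a finite set, a contradiction. This already forces all $m_i = 1$, so $Y$ is reduced and $\sum d_i = 5$.

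Next, assuming $Y = Z_1 \cup \dots \cup Z_r$ is reducible ($r \geq 2$) with $\sum d_i = 5$, I must locate a $2$-plane among the $Z_i$ and bound their number by three. The key observation is that two distinct components $Z_i, Z_j$ meet along a curve $C_{ij} := Z_i \cap Z_j$ (since they are hypersurfaces in $\Ps^3$ of positive degree), and every point of $C_{ij}$ is a singular point of $Y$, hence lies in the finite set $\sing(X) \cap H$ --- unless one resolves this by noting $C_{ij}$ is positive-dimensional, which is again a contradiction. Wait: that cannot be right in general, since $Y$ reducible certainly happens. The correct reading is that $C_{ij}$ \emph{is} positive-dimensional but its points need not be singular points of $X$ lying on $H$; rather, $C_{ij} \subset \sing(Y)$ but $\sing(Y)$ need not inject into $\sing(X)$. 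So instead I would argue as follows: if $Z_i$ is a plane we are done with the existence claim; otherwise no $d_i = 1$, and the only partitions of $5$ into parts $\geq 2$ are $5 = 5$ (irreducible, excluded), $5 = 2+3$, so $Y = S_2 \cup S_3$ with $S_2$ an irreducible quadric surface and $S_3$ a cubic surface. By part~\ref{p31-seven}, since $X$ contains the irreducible quadric $S_2$, the quintic $X$ contains a $2$-plane $\Pi$; and $\Pi \cap H$ is a line or $\Pi$ itself. To upgrade this to: $H$ itself contains a plane, I would take the plane $\Pi \subset X$ guaranteed by \ref{p31-seven}, intersect with $S_2 \cup S_3 = X \cap H$, and use that $\Pi$ meets the quadric $S_2$ in a conic; combined with \ref{p31-three}--\ref{p31-four} on the six triple points lying on $S_2$, deduce that these force a plane component inside $H$.

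For the upper bound of three $2$-planes: the planes among the $Z_i$ are distinct irreducible components of $Y$, each of degree $1$, and their degrees sum to at most $5$, so there are at most $5$ of them a priori; to cut down to $3$, I would invoke part~\ref{p31-five}, which says three distinct $2$-planes in $X$ meet in a finite set, together with the constraint that a $2$-plane in $X$ carries (generically) certain triple points --- indeed using \ref{p31-one}, \ref{p31-two}, and \ref{p31-three} one shows at least $4$ coplanar triple points on a plane of $X$ (by \ref{p31-four}) versus the bound $t \leq 11$ and the incidence geometry, $4$ or $5$ mutually distinct planes in a single hyperplane would force too many collinear or coplanar triple points, violating \ref{p31-two} or \ref{p31-three}. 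The hardest step, and the one I expect to require the most care, is precisely this last counting argument ruling out $4$ or $5$ coplanar $2$-planes: it is a delicate incidence computation with the triple points, leaning on all of parts~\ref{p31-one}--\ref{p31-five} simultaneously, and on the arithmetic $\sum d_i = 5$.
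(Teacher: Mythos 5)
Your overall skeleton (reducedness, the degree partition $5=2+3$ when no component is a plane, part~\ref{p31-seven} for the quadric case, an incidence argument for the bound of three planes) matches the paper's, but three steps have genuine problems. The most serious is the reducedness argument: the inclusion $\sing(X\cap H)\subseteq \sing(X)\cap H$ is false. A hyperplane section of a variety can be singular at smooth points of the variety (any tangent hyperplane section is), and indeed a reducible $X\cap H$ is singular along the whole intersection curve of two components without $X$ being singular there --- you notice this yourself for the components, but the same objection kills your multiplicity argument. The paper's fix is to work with the equation of $X$ itself: if $X\cap H$ has a multiple component, then $F=\ell g_1+g_2^2g_3$ with $\ell$ defining $H$, and a direct computation of the partials shows $X$ (not merely $X\cap H$) is singular along the curve $V(\ell,g_1,g_2)$, contradicting [\Ajeden]. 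One could instead invoke finiteness of the Gauss map of $X$ on its smooth locus, but some such input is needed; your stated justification does not suffice.

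The other two steps are incomplete rather than wrong. To pass from ``$X$ contains a $2$-plane'' (the conclusion of part~\ref{p31-seven}) to ``$X\cap H$ contains a $2$-plane,'' your proposal of intersecting that plane with $S_2\cup S_3$ and ``deducing'' a plane component is not an argument --- the plane produced by part~\ref{p31-seven} may meet $H$ only in a line. The clean route, which the paper takes, is through part~\ref{p31-six}: the six triple points on $S_2$ all lie in $H$, so $k=6$ and part~\ref{p31-six} directly says $X\cap H$ contains a $2$-plane (contradicting the assumption that no component of $X\cap H$ is a plane). Finally, for the bound of three planes you only assert that an incidence computation ``should'' rule out four or five planes and flag it as the hardest step; this is precisely the part that must be carried out. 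The paper's version: if $\Pi_1,\Pi_2,\Pi_3\subset X\cap H$ are planes, then $H$ carries exactly six triple points by part~\ref{p31-six}, each $\Pi_i$ carries four of them by part~\ref{p31-four}, and parts~\ref{p31-six} and~\ref{p31-five} force each pairwise intersection line to carry exactly two of the six; this pins the third plane down to the four points $\{P_3,P_4,P_5,P_6\}$ determined by the first two, so any fourth plane would have to coincide with $\Pi_3$. Without this (or an equivalent) computation the upper bound is unproved.
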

\begin{proof}

\noindent
(\ref{p31-one}): Let $\ell := \langle P_{j_1}, P_{j_2} \rangle$. 
The degree $5$ polynomial $\FX|_{\ell}$ has triple roots  at both $ P_{j_1}$ and  $P_{j_2}$, so it vanishes along $\ell$. 

\noindent (\ref{p31-two}):  We can assume that  $P_{j_1} = (1\colon0\colon\ldots\colon 0)$ and  $P_{j_2} = (0\colon 1\colon 0\colon \ldots\colon 0)$.
Then $\ell := \langle P_{j_1}, P_{j_2} \rangle = V(x_2, x_3, x_4)$ and,  by (1), we have
$\FX= \sum_{j=2}^{4} x_j f_j$. \\
Obviously the partials  $\partial \FX/\partial x_0$,  $\partial \FX/\partial x_1$  vanish along $\ell$. Moreover, for $j \geq 2$ 
the choice of the points $P_{j_1}$, $P_{j_2}$ implies that 
there exist $\alpha_j \in \C$ such that
$$
\partial \FX / \partial x_j =  f_j  = \alpha_j (x_0 x_1)^2 \mbox{ modulo the ideal } \mathrm{I}(\ell).
$$
Suppose that $X$ has  another triple point on $\ell$, say  $P_{j_3}=(c_0:c_1:0:0:0)$. 
Since $c_0 c_1 \neq 0$, all the coefficients $\alpha_2, \ldots, \alpha_4$ vanish. This yields 
$\ell \subset \sing(X)$ and contradicts the assumption that $\sing(X)$ is finite.

\noindent (\ref{p31-three}) and (\ref{p31-four}):
Suppose that $\Pi = V(x_0,x_1)$ and  $\sing(X) \cap \Pi = \{P_1, \ldots, P_k\}$.

 If $\Pi$ is not contained in $X$ then the intersection $X \cap \Pi$ is a quintic curve. 
By (\ref{p31-one}) it contains all the lines $\langle P_{j_1}, P_{j_2} \rangle$ for distinct  $j_1, j_2 \leq k$. From (\ref{p31-two}) we obtain   
 the inequality  $k \leq 3$.

Assume now that $\Pi$ is contained in $X$. Then we can write 
\begin{equation} \label{eq-expx0x1}
\FX=x_0 \fgg +x_1 \hhh .
\end{equation}
Observe that the hypersurface $V(\fgg)$ (resp. $V(\hhh)$) meets the plane $\Pi$ along a quartic curve and 
the points $P_1$, $\ldots$, $P_k$ are double points of both quartic curves. Thus for $j \leq k$ the two planar quartic curves meet at each  $P_j$ with multiplicity at least  four and the claim (\ref{p31-three}) follows from B\'ezout. 

To complete the proof of (\ref{p31-four}) it suffices to   show that  the intersection multiplicity of the curves given on $\Pi$  by $\fgg$ and $\hhh$ 
at $P_1 := (0\colon 0\colon 0\colon 0\colon 1)$   is precisely four.  We consider the expansion 
$f_{j}=\sum_{i=2}^4 f_{j,i}(x_0,x_1,x_2,x_3)x_4^{4-i}$
for $j=0,1$.
Then the tangent cone of $X$ at $P_1$ is given by 
\begin{equation} \label{eq-tang-cone}
g:=(x_0 f_{0,2}+x_1 f_{1,2})
\end{equation} 
Obviously the partials $\partial g / \partial x_2$, $\partial g / \partial x_3$ vanish along $V(x_0,x_1)$, whereas for $j=0,1$
we have 
\[ (\partial g / \partial x_j)|_{V(x_0,x_1)} = f_{j,2}(0,0,x_2,x_3).\]
By assumption $P_1$ is an ordinary triple point, so $g \in \C[x_0, \ldots, x_3]$ defines a smooth cubic surface.
Thus $f_{0,2}(0,0,x_2,x_3)$ and $f_{1,2}(0,0,x_2,x_3)$ have no common factor and therefore the intersection multiplicity of the planar quartic curves at $P_1$ is exactly four
(observe that the tangent cone of $V(f_{j}) \cap \Pi$ at $P_1$ is given by $f_{j,2}(0,0,x_2,x_3)$). Moreover, for every $(\lambda_0 \colon \lambda_1) \in \Ps^1$
\begin{equation} \label{eq-always-double}
  \mbox{ the curve } V(\lambda_0 \fgg +  \lambda_1 \hhh) \cap \Pi \mbox{ has a double point in } P_1. 
\end{equation}  

\noindent (\ref{p31-six}):
Assume that $X_H:=X\cap H$ is a hyperplane section containing $k=6$ triple points of $X$ and no plane, or $k\geq 7$ triple points of $X$.

We first show that $X_H$ is reduced:
Suppose $X_H$ has a non-reduced component then  defining polynomial of $X$ is of the form
$\ell g_1+g_2^2 g_3$. Then  $X$ would be singular along the curve $\ell=g_1=g_2=0$, a contradiction.

We now show that $X_H$ contains at most two planes:

If $X_H$ contains five planes then each triple point is contained on at least three planes, and each plane contains four of the triple points. Hence we can have at most $\lfloor 5\cdot 4/3 \rfloor=6$ triple points, contradicting $k\geq 7$.
If $X_H$ is the union of three planes $\Pi_i$ with $i\in \{1,2,3\}$ and a quadric surface $S$ then all $k\geq 7$ triple points are contained in $\Pi_1\cup \Pi_2\cup \Pi_3$. 
Since each plane contains four triple points, we have at most 5 points on more then one plane, and  at least  two of the $k$ points are smooth points of  $\Pi_1\cup \Pi_2\cup \Pi_3$. These points are double points of $S$. This implies that $S$ is reducible, a contradiction.

Hence $X_H$ has either three irreducible components, one of which is a plane, or at most two irreducible components.

If $X_H$ contains two planes $\Pi_1,\Pi_2$ then we claim that at most one of the triple points is on $\Pi_1\cap \Pi_2$. If there were two triple points then at most six of the triple points are contained in $\Pi_1\cup \Pi_2$. Hence the residual cubic $S$ has a triple point and therefore $S$ an irreducible cubic cone. The four triple points on $(\Pi_1\cup \Pi_2)\setminus (\Pi_1\cap \Pi_2)$ are double points of $S$. Since no three triple points are on a line we find that $S$ has four double lines and therefore is non-reduced. This contradicts the reducedness of $X_H$.

For the final step of the proof we renumber the points in such a way that $P_1,P_2,P_3,P_4$ are in general linear position. If $X_H$  contains a plane then we assume that the points are chosen such that $P_1,P_2,P_3$ and $P_7$ are coplanar. 
Then we have that $P_5$ and $P_6$ are not on any plane spanned by three points from $\{P_1,P_2,P_3,P_4\}$.

Let $\varphi:H \dashrightarrow H$ be the Cremona transformation centered at $P_1$, $P_2$, $P_3$ and $P_4$. Since $X_H$ has triple points at $P_1,P_2,P_3,P_4$ a direct calculation yields that $\varphi(X_H)$ has degree at most 3.

The rational map $\varphi$ contracts the planes spanned by any subset of three points from $\{P_1,P_2,P_3,P_4\}$ and is an immersion everywhere else.  In particular, $\varphi(X_H)$ has at most two irreducible components and each component is reduced. A cubic surface with at least two triple points is either non-reduced or is the union of three planes, hence $\varphi(X_H)$ has at most one triple point. On the other hand, $\varphi(P_5)$ and $\varphi(P_6)$ are distinct triple points of $\varphi(X_H)$, a contradiction.

\noindent (\ref{p31-five}):
Suppose $X$ contains three planes  $\Pi_1,\Pi_2,\Pi_3$ through a line $\ell$.
Without loss of generality we may assume that the three 2-planes  $\Pi_i$ on $X$  are $V(x_2,x_3)$, $V(x_2,x_4)$, $V(x_3,x_4)$.
Suppose for the moment that two of the triple points are on $\ell$. Then we may assume that   
\[ P_1=(1\colon 0\colon 0\colon 0\colon 0)\mbox { and }P_2=(0\colon 1\colon 0\colon 0\colon 0).\] 
A straightforward computation shows that every monomial of every generator of the ideal
\[ \mathrm{I}(P_1)^3 \cap \mathrm{I}(P_2)^3 \cap \mathrm{I}(\Pi_1) \cap \mathrm{I}(\Pi_2) \cap \mathrm{I}(\Pi_3)\]
of degree at most 5 is divisible by $x_0^a x_1^{2-a}$ for some $a$. In particular, one has $\ell \subset \sing(X)$. A contradiction.

If at most one of the triple points of $X$ is on $\ell$ then there are seven triple points contained in the hyperplane containing $\Pi_1$ and $\Pi_2$. This is excluded by (\ref{p31-six}).

\noindent (\ref{p31-seven}):
Let $S_2$ be given by $x_0=q_2(x_1,\dots,x_4)=0$. A quintic threefold $X$ containing $S_2$ is of the form
\[ x_0 h_0(x_0,\dots,x_4)+q_2h_2(x_1,\dots,x_4)=0.\]
The singular locus of $X$ contains the intersection $x_0=h_0=q_2=h_2=0$. This is a scheme of length 24. 

Let $P$ be a triple points of $X$. If $p$ is a singular point of both $q_2=0$ and $h_2=0$ then the tangent cone of $X$ at $p$ is reducible, contradicting the fact that $p$ is an ordinary triple point. Hence we can pick local coordinates $z_1,\dots,z_n$ such that $X$ is locally given by 
\[z_1g_1(z_1,z_2,z_3,z_4)+z_2g_2(z_2,z_3,z_4)=0\]
and we want to determine the length of $z_1=z_2=g_1=g_2=0$. The argument used in the case of a $2$-plane (see (\ref{p31-four})) can also be applied here and we find that the length of this scheme equals  4 at each triple point, hence there are exactly 6 triple points of $X$ 
on $S_2$.

\noindent (\ref{p31-eight}): The hyperplane section $X_H := X\cap H$ is reduced by the proof of part (\ref{p31-six}). 
If $X_H$  is reducible then it contains either a $2$-plane or a quadric surface. Thus, by (\ref{p31-seven}),  it contains a $2$-plane. \\
Suppose $X_H$ contains three $2$-planes $\Pi_1,\Pi_2,\Pi_3$. Then $\Pi_j \cap \sing(X)$ consists of four points by 
(\ref{p31-four}).
Moreover,  claims (\ref{p31-six}) and (\ref{p31-five}) yield that each pair of the  $2$-planes meets along a line that contains exactly two triple points.
Hence we may assume that $\Pi_1$ contains the points $P_1,P_2,P_3,P_4$ and $\Pi_2$ contains $P_1,P_2,P_5,P_6$. Then,
the quadruplet of triple points on 
the $2$-plane $\Pi_3$ splits into a pair of triple points that belong to  $\Pi_1$ and a pair of triple points  on $\Pi_2$.
 Hence $\Pi_3$ contains the points $\{P_3,P_4,P_5,P_6\}$ and these four points are on a 2-plane. If there were a fourth plane $\Pi_4 \subset X$ then by the same reasoning it would have to contain these four points and hence 
$\Pi_3=\Pi_4$, a contradiction.
\end{proof}

As a consequence of the proof of Proposition~\ref{prpBasic}.\ref{p31-eight} we obtain the following observation.

\begin{observation} \label{obs-useful}
{\rm (1)} If  the hyperplane section $H \cap X$ contains two  $2$-planes $\Pi_0$, $\Pi_1$, then $H$ contains  $6$ triple points of $X$ and exactly two singularities of $X$ belong to the line  $\Pi_0 \cap \Pi_1$. \\
{\rm (2)} Suppose   the hyperplane section $H \cap X$ contains three (distinct) $2$-planes $\Pi_0$, $\Pi_1$, $\Pi_2$. Then 
 the set of the six triple points of $X$ on $H$  splits into a pair of points in $\Pi_0 \cap \Pi_1$, a pair
in  $\Pi_0 \cap \Pi_2$ and a pair in  $\Pi_1 \cap \Pi_2$ .
\end{observation}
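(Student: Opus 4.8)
Both statements are essentially what the proof of Proposition~\ref{prpBasic}.\ref{p31-eight} already establishes, and the plan is simply to isolate that bookkeeping. The facts I would use are: a $2$-plane contained in $X$ carries exactly four triple points of $X$ (combine items~\ref{p31-three} and~\ref{p31-four} of Proposition~\ref{prpBasic}); a line carries at most two triple points (item~\ref{p31-two}); a hyperplane carries at most six triple points (item~\ref{p31-six}); and three distinct $2$-planes contained in $X$ have only finitely many common points (item~\ref{p31-five}).

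For part~(1) I would set $\ell := \Pi_0\cap\Pi_1$, a line because $\Pi_0\neq\Pi_1$ both lie in the $3$-space $H$. Since $\Pi_0,\Pi_1\subset X$, each of them meets $\sing(X)$ in exactly four points, while $\ell$ meets $\sing(X)$ in at most two points; hence by inclusion--exclusion $|\sing(X)\cap(\Pi_0\cup\Pi_1)| = 8 - |\sing(X)\cap\ell| \geq 6$. On the other hand $\sing(X)\cap(\Pi_0\cup\Pi_1)\subseteq\sing(X)\cap H$, and the right-hand side has at most six elements. So all the inequalities must be equalities: $H$ carries exactly six triple points of $X$, all of them lie on $\Pi_0\cup\Pi_1$, and exactly two of them lie on $\ell$.

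For part~(2) I would apply part~(1) to each of the pairs $(\Pi_0,\Pi_1)$, $(\Pi_0,\Pi_2)$, $(\Pi_1,\Pi_2)$ --- all three pairs are contained in $H\cap X$ --- to conclude that $\sing(X)\cap H$ consists of six points $P_1,\dots,P_6$ and that each of the three lines $\ell_{ij} := \Pi_i\cap\Pi_j$ carries exactly two of them. Normalising notation, let $\ell_{12}$ carry $P_1,P_2$; since $\Pi_1$ and $\Pi_2$ meet only along $\ell_{12}$, the two triple points of $\Pi_1$ off this line are disjoint from the two of $\Pi_2$ off this line, so after relabelling $\Pi_1\cap\sing(X) = \{P_1,P_2,P_3,P_4\}$ and $\Pi_2\cap\sing(X) = \{P_1,P_2,P_5,P_6\}$. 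The remaining task is to show $\Pi_0\cap\sing(X) = \{P_3,P_4,P_5,P_6\}$, since then $\ell_{01} = \Pi_0\cap\Pi_1$ and $\ell_{02} = \Pi_0\cap\Pi_2$ necessarily carry $\{P_3,P_4\}$ and $\{P_5,P_6\}$, which is the asserted splitting. Suppose instead $P_1\in\Pi_0$. Then $P_1\in\ell_{01}\cap\ell_{02}$; moreover $\ell_{01}\neq\ell_{12}$, for otherwise $\ell_{12}\subseteq\Pi_0\cap\Pi_1\cap\Pi_2$, contradicting item~\ref{p31-five}. Hence the second triple point on $\ell_{01}$ lies in $\{P_3,P_4\}$ and, likewise, the second triple point on $\ell_{02}$ lies in $\{P_5,P_6\}$; this already identifies three of the four triple points of $\Pi_0$. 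Inspecting the candidates for the fourth one --- it would be $P_2$, or the remaining member of $\{P_3,P_4\}$, or the remaining member of $\{P_5,P_6\}$ --- each case forces three triple points onto one of the lines $\ell_{01}$, $\ell_{02}$, $\ell_{12}$, contradicting item~\ref{p31-two}. So $P_1\notin\Pi_0$, and by the same argument $P_2\notin\Pi_0$; thus $\Pi_0\cap\sing(X)\subseteq\{P_3,P_4,P_5,P_6\}$, with equality by cardinality.

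I expect the whole argument to be routine counting; the only mildly delicate step is the last part of~(2), namely ruling out that $\Pi_0$ passes through one of the two triple points lying on $\ell_{12}$, and it is exactly there that items~\ref{p31-two} and~\ref{p31-five} get used in an essential way.
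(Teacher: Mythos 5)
Your proof is correct and follows essentially the same route as the paper, which derives this Observation from the counting already done in the proof of Proposition~\ref{prpBasic}.\ref{p31-eight}: exactly the same ingredients (items~\ref{p31-two}, \ref{p31-four}, \ref{p31-five}, \ref{p31-six}) combined by the same inclusion--exclusion on $\Pi_i\cup\Pi_j$. Your part~(2) merely spells out the case analysis behind the paper's one-line assertion that the quadruplet on the third plane splits into a pair on each of the other two, which is a welcome elaboration rather than a different argument.
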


\section{Quintic threefolds containing a plane}\label{secPlane}

In this section we maintain the assumption [\Ajeden] from Section~\ref{secInv}, i.e., all singularities of a quintic $X \subset \Ps^4$ 
are ordinary triple points $P_1$, $\ldots$, $P_{t}$.   Moreover we make the additional assumption:

\begin{assumption}
{\rm {\bf [\Adwa]}}  the quintic threefold $X$ contains a $2$-plane $\Pi_0$.
\end{assumption}

For a hyperplane $H$ such that $\Pi_0 \subset H$ we define the surface
\begin{equation} \label{eq-def-qh}
Q := Q(H) := \overline{(X\cap H)\setminus \Pi_0}
\end{equation}
As we will show in the proof of Proposition~\ref{prop-fibers}.1, the surface  $Q$ is always a quartic.

The main results of this section is  the following proposition. 

\begin{proposition}\label{corPlanes}
Suppose $X$ is  a quintic threefold $X$ satisfying {\rm [\Ajeden], [\Adwa]}. If $X$ has at least 11 ordinary triple points, then there exists three distinct hyperplanes $H_1$, $H_2$, $H_3$ $\in$ $|{\mathcal O}(1) - \Pi_0|$ such that 
$Q(H_1)$ and  $Q(H_2)$ are both the union of two planes together with a quadric surface, whereas $Q(H_3)$   is either a plane together with a cubic surface with isolated singularities 
or again the union of two planes with a quadric surface.
\end{proposition}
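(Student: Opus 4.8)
The plan is to study the linear projection
\[
\pi_{\Pi_0}\colon X \dashrightarrow \Ps^1
\]
whose fibres are exactly the surfaces $Q(H)$ for $H \in |\mathcal O(1)-\Pi_0|\cong\Ps^1$, resolved along $\Pi_0$. First I would record the structure of a \emph{general} fibre: by the remark after \eqref{eq-def-qh} (which, as the authors say, is proved in Proposition~\ref{prop-fibers}.1) each $Q(H)$ is a quartic surface, and for generic $H$ it is irreducible with isolated singularities. The triple points of $X$ distribute among the fibres; since no three are collinear with a point of $\Pi_0$ in a bad way, and since by Proposition~\ref{prpBasic}.\ref{p31-six} any hyperplane through $\Pi_0$ carries at most $6$ of them (the four on $\Pi_0$ itself being excluded from $Q(H)$ except when they lie on $Q(H)$ too — one has to be careful here), the $11$ triple points force several fibres to be \emph{special}. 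The key numerical input is a count: if $\Pi_0$ contains $m\in\{0,1,2,3,4\}$ of the triple points, the remaining $11-m$ lie off $\Pi_0$ and hence on the closures $Q(H)$, and I would combine \eqref{eq-euler-char-singular} / the Euler-characteristic bookkeeping of Section~\ref{secInv} with the fibrewise Euler characteristic of the resolution $\widetilde{X}\to\Ps^1$ to bound how many fibres can be irreducible.

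Second, I would translate ``many triple points in one fibre'' into ``that fibre is very reducible.'' A quartic surface $Q(H)$ with a triple point is a cone over a plane quartic curve or splits off a plane; more relevantly, the interaction of $Q(H)$ with $\Pi_0$ inside the hyperplane $H$ is a plane quartic curve $Q(H)\cap\Pi_0$, and the analysis in the proof of Proposition~\ref{prpBasic}.\ref{p31-four},\ref{p31-seven} shows each triple point of $X$ lying on both $\Pi_0$ and $Q(H)$ contributes intersection multiplicity $4$, while the combinatorics of Proposition~\ref{prpBasic}.\ref{p31-three} bounds how many triple points a quartic surface of each reducibility type can acquire (a plane: $\le 4$; a quadric: $\le 6$ by \ref{p31-seven}; an irreducible cubic with isolated singularities: a bounded number one computes from adjunction/Bézout). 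Running this bound: an irreducible $Q(H)$ can carry only a limited number of triple points (I expect $\le 6$, with equality forcing extra structure), a union plane$+$cubic a few more, and a union of two planes $+$ a quadric the maximum. So to fit $11-m$ triple points in the fibres one is \emph{forced} to have at least three fibres whose quartics are maximally degenerate: precisely the statement. The proof would be a pigeonhole argument: list the possible ``types'' of $Q(H)$ by reducibility, attach to each type the maximal number of triple points it can contain together with the resulting increment in $\chi$ (equivalently, the number of extra $2$-planes in $X$ it produces), and check that unless at least two fibres are of type ``two planes $+$ quadric'' and a third of type ``plane $+$ cubic'' or ``two planes $+$ quadric,'' the total is $<11-m$.

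The main obstacle — and where most of the work sits — is the exhaustive classification of degenerate quartic fibres $Q(H)$ together with a \emph{sharp} count of how many ordinary triple points of $X$ each can support, because this is not merely a statement about abstract quartic surfaces but about quartics that appear as residual intersections $\overline{(X\cap H)\setminus\Pi_0}$, so their singularities are constrained both by being ordinary triple points of the ambient $X$ (reducible tangent cones are forbidden, cf.\ the argument in \ref{p31-seven}) and by their position relative to $\Pi_0$ inside $H$. I would organize this as a separate lemma, invoking the detailed fibre analysis that the introduction promises in Section~\ref{secFib} and Appendix~\ref{appPen} (pencils of plane quartic curves), and I would handle the five reducibility patterns of a quartic surface (irreducible; plane $+$ irreducible cubic; two planes $+$ quadric; quadric $+$ quadric; plane $+$ plane $+$ plane $+$ plane, etc.) case by case, in each case using Bézout in $H$ against $\Pi_0$ plus Proposition~\ref{prpBasic}.\ref{p31-one}--\ref{p31-five} to cap the triple-point count. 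A secondary subtlety is bounding the \emph{number} of reducible fibres from above (so that the pigeonhole is tight and one gets exactly ``three,'' not merely ``at least one''); this is where the Euler-characteristic identity $\chi(\widetilde X)=-200+24t$ and the additivity of $\chi$ over the fibration of $\widetilde X\to\Ps^1$ enter, each reducible fibre contributing a definite positive excess, so that finitely many of them exhaust the budget and their types are pinned down.
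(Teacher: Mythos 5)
Your framework---the pencil of hyperplanes through $\Pi_0$, the classification of degenerate residual quartics $Q(H)$, and the Euler characteristic of the fibration---is the right one, and it is essentially the paper's. But the central mechanism you propose, a pigeonhole on how many triple points each reducibility type of $Q(H)$ can support, does not work, and the roles you assign to the two counting devices are inverted. By Proposition~\ref{prpBasic}.\ref{p31-six} \emph{every} hyperplane through $\Pi_0$ contains at most $6$ triple points of $X$, four of which are the points of $\sing(X)\cap\Pi_0$ (and there are exactly four of these by Proposition~\ref{prpBasic}.\ref{p31-four}, so your parameter $m$ is always $4$). Hence every fiber, irreducible or maximally reducible, carries at most $2$ of the $7$ off-plane triple points; a reducible fiber does not carry more of them than an irreducible one. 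Counting triple points therefore only shows that at least four fibers are \emph{singular} away from $\Pi_0$; it can never force any fiber to be reducible, since nothing in your argument excludes an irreducible quartic fiber with two triple points of $X$ (and indeed the paper's Section~\ref{secFib} allows such fibers---they simply satisfy $\chi(Q)+\epsilon(Q)\leq 20$).

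The actual engine is the Euler-characteristic deficit, which you relegate to a ``secondary subtlety'' and attach to the wrong space ($\chi(\tilde X)=-200+24t$ is the blow-up at the triple points; the fibration lives on the blow-up $X'$ along $\Pi_0$, with $\chi(X')=-196+16t$). For $t=11$ one has $\sum_{P}(\chi(\pi^{-1}(P))-20)=-60$, whereas if all fibers had isolated singularities the seven off-plane triple points would each cost at least $8$ and the four nodes on $\Pi_0$ would each degenerate on exactly five fibers (Proposition~\ref{prop-fibers}.2, via the ten lines meeting a fixed line on the tangent-cone cubic---an input your sketch omits), costing a further $20$, for a total of at least $76>60$. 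The surplus of $16$ must be absorbed by fibers whose Euler characteristic is anomalously \emph{large} relative to their singularity content, i.e.\ fibers with $\chi(Q)+\epsilon(Q)>20$; Proposition~\ref{prop-epsilon} shows these are exactly the reducible types you list, each with excess at most $6$ and each containing exactly two off-plane triple points. It is the latter fact, together with $7<2\cdot 4$, that bounds the number of excess fibers above by three (not the Euler characteristic, as you suggest), and then $16\leq 6+6+6$ with at most three summands pins down the types: two fibers of excess $\geq 5$ (necessarily two planes plus a quadric) and a third of excess $\geq 4$ (plane plus cubic, or again two planes plus a quadric). Without this rebalancing your pigeonhole has no teeth.
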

The proof of Proposition~\ref{corPlanes} can be found at the end of this section.

By Proposition~\ref{prpBasic} we can label the triple points in such a way that $P_1,\ldots,P_4$ are the only triple points of $X$ on the plane $\Pi_0$. At first we study 
the behaviour of hyperplane sections around $P_1,\ldots,P_4$. 
\begin{proposition} \label{prop-fibers}
{\rm (1)} For general  $H \in |{\mathcal O}(1) - \Pi_0|$ the surface $Q$ is a quartic with  exactly four singular points   $P_1,\ldots,P_4$ and each
singularity of $Q$ is a node (i.e., an $A_1$-singularity).

\noindent
{\rm (2)} Fix $j \leq 4$. Then there exist precisely five hyperplanes $H \in |{\mathcal O}(1) - \Pi_0|$ such that  $P_j$ is no longer a singularity of type $A_1$ on $Q=Q(H)$. 
\end{proposition}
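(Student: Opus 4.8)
The plan is to study everything locally around one triple point $P_j$, say $P_1$, and to count when the hyperplane section degenerates.

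First I would set up coordinates so that $\Pi_0 = V(x_0,x_1)$ and $P_1 = (0:0:0:0:1)$, and write $\FX = x_0 \fgg + x_1 \hhh$ as in \eqref{eq-expx0x1}. A hyperplane $H \in |\mathcal{O}(1)-\Pi_0|$ is $V(\lambda_0 x_0 + \lambda_1 x_1)$ for $(\lambda_0:\lambda_1)\in\Ps^1$; on such $H$ we may eliminate one variable and obtain $Q(H) = V(\lambda_1 \fgg - \lambda_0 \hhh)$ inside $H \cong \Ps^3$ (restricting to $H$). For part (1) I first check $Q$ is a genuine quartic, i.e.\ that $\lambda_1\fgg - \lambda_0\hhh$ does not vanish on all of $H$: if it did, $\Pi_0$ would have to split off with multiplicity and $X$ would be singular along a curve, contradicting [\Ajeden]. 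Next, away from $\Pi_0$ the singularities of $Q$ are exactly the singularities of $X$ lying on $H$; along $\Pi_0$, by \eqref{eq-always-double} the point $P_1$ (and likewise $P_2,P_3,P_4$) is a double point of $Q$, and by Proposition~\ref{prpBasic}.\ref{p31-six} no triple point of $X$ other than $P_1,\dots,P_4$ lies on a general $H\supset\Pi_0$. So for general $H$ the only candidate singularities of $Q$ are $P_1,\dots,P_4$. To see they are nodes for general $H$, I analyze the quadratic part of $Q$ at $P_1$: by \eqref{eq-tang-cone} the tangent cone of $X$ at $P_1$ is $x_0 f_{0,2} + x_1 f_{1,2}$ with $\{f_{0,2}(0,0,x_2,x_3)=0\}$ and $\{f_{1,2}(0,0,x_2,x_3)=0\}$ having no common factor (ordinary triple point), and the tangent cone of $Q$ at $P_1$ on $H$ is the corresponding linear combination $\lambda_1 f_{0,2} - \lambda_0 f_{1,2}$ restricted to $H$; this is a quadratic form in three variables whose rank is $3$ (a node) except for finitely many $(\lambda_0:\lambda_1)$. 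A dimension/genericity argument over $\Ps^1$ then gives that for general $H$ all four points are simultaneously nodes.

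For part (2), fix $j$, say $j=1$, and examine when $P_1$ fails to be an $A_1$-point of $Q(H)$. Working in an affine chart on $H$ centered at $P_1$, write $Q$ locally as $q_2 + q_3 + \cdots$ where $q_2$ is the rank-$\leq 3$ quadratic form above depending linearly on $(\lambda_0:\lambda_1)$. The point $P_1$ is an $A_1$-singularity precisely when $q_2$ has rank $3$. The locus where $q_2$ drops rank is cut out by a determinant: $q_2$ is (the restriction to the $3$-dimensional tangent space of $H$ at $P_1$ of) $\lambda_1 f_{0,2} - \lambda_0 f_{1,2}$, a pencil of quadratic forms in three variables, so the bad locus is the vanishing of a cubic homogeneous polynomial in $(\lambda_0:\lambda_1)$. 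A priori this gives three bad values, not five, so the key subtlety is that failing to be $A_1$ can also happen \emph{without} the tangent cone degenerating — namely, the ambient hyperplane $H$ could become tangent to $X$ along $\Pi_0$ in a way that makes $P_1$ a worse singularity of $Q$ even with $q_2$ of rank $3$ but, more importantly, we must account for the hyperplanes where $Q$ fails to be reduced or where $P_1$ lies on a line of singularities. The honest count of five will come from intersecting $X$ appropriately: I expect that the correct framework is to consider the projection of $X$ from $\Pi_0$ and its resolution (the fibration set up informally in the introduction and, per the paper, analyzed in Section~\ref{secFib}), under which the relevant ``discriminant'' contribution at $P_1$ has a precise multiplicity, and a Euler-characteristic or degree computation for that fibration — using $\chi(X) = -200+16t$ from \eqref{eq-euler-char-singular} and the local contribution $16$ of an ordinary triple point — pins the number of special hyperplanes through $P_1$ at exactly five.

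The main obstacle will be the precise bookkeeping in part (2): proving that the number is exactly five rather than merely ``at most some bound''. The degeneration of $Q$ at $P_1$ as $(\lambda_0:\lambda_1)$ varies combines (a) rank drop of the tangent cone, detected by a cubic in $(\lambda_0:\lambda_1)$, and (b) higher-order degenerations, and one must show these contribute with the right multiplicities summing to five. I expect the cleanest route is a local intersection-theoretic computation: blow up $X$ along $\Pi_0$ (or along the relevant ideal), identify the surface over $P_1$ in the resulting fibration, and compute the degree of the discriminant divisor of $\Pi_0$-projection restricted to a small neighbourhood of $P_1$, which should equal five. The pencil-of-quartics input from Appendix~\ref{appPen} is presumably exactly what makes this local count rigorous, so I would invoke it at this step rather than attempt the calculation by hand here.
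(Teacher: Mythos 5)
Your part (1) follows the paper's route (write $\FX=x_0\fgg+x_1\hhh$, use \eqref{eq-always-double} for the double points and Proposition~\ref{prpBasic} plus genericity for the rest), though note that excluding singularities of $Q$ at \emph{smooth} points of $X$ where $H$ is tangent requires a Bertini argument, not just Proposition~\ref{prpBasic}.\ref{p31-six}. The genuine gap is in part (2), and it stems from a concrete miscount. You assert that the tangent cone of $Q$ at $P_1$ is a quadratic form in three variables ``depending linearly on $(\lambda_0:\lambda_1)$'', so that the rank-drop locus is a cubic in $(\lambda_0:\lambda_1)$, and you then (correctly) notice that three does not match five. But the mismatch is an artifact of your computation: the three-variable form is obtained from $\lambda_1 f_{0,2}-\lambda_0 f_{1,2}$ by \emph{also} substituting the $H$-dependent parametrization of $H$ (say $x_0=\lambda_1 s$, $x_1=-\lambda_0 s$). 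Its Gram matrix in the coordinates $(s,x_2,x_3)$ therefore has entries of degree $3$, $2$, $1$ in $(\lambda_0,\lambda_1)$ according to how many indices equal $s$, and every term of its determinant has degree $5$, not $3$. Moreover, since $P_j$ is an isolated surface hypersurface singularity, it is of type $A_1$ \emph{if and only if} this quadratic part has rank $3$ (holomorphic Morse lemma), so there are no additional ``higher-order degenerations'' or non-reducedness phenomena to account for: the whole count sits in this degree-$5$ discriminant. Your closing paragraph, which defers the count to an unspecified Euler-characteristic computation and to Appendix~\ref{appPen}, is not a proof and also misattributes the appendix, which is used for the fiber analysis in Section~\ref{secFib}, not for this proposition.

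The paper's actual argument is geometric and, unlike a bare discriminant bound, also yields that the five hyperplanes are \emph{distinct}: the tangent cone of $X$ at $P_j$ is a cone over a smooth cubic surface $S$ containing the line $\ell$ that corresponds to $\Pi_0$; a hyperplane $H\supset\Pi_0$ cuts this cone in the cone over $\ell\cup C_H$, where $C_H$ is the conic residual to $\ell$ in the plane section of $S$ by the corresponding plane through $\ell$, and \eqref{eq-always-double} guarantees that $\Pi_0$ is not a component of the tangent cone of $Q$, so that tangent cone is exactly the cone over $C_H$. Hence $P_j$ fails to be a node precisely when $C_H$ is reducible, i.e.\ when the plane contains two of the ten lines of $S$ meeting $\ell$; these ten lines pair off into exactly five planes through $\ell$, which gives precisely five hyperplanes. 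To repair your write-up you should either adopt this argument or redo the determinant computation with the correct degrees and then still prove that the resulting quintic in $(\lambda_0:\lambda_1)$ has five distinct roots — a point that matters downstream, since the identity $\sum_{R\in\Delta}\epsilon(\pi^{-1}(R))=8(t-4)+20$ in the proof of Proposition~\ref{corPlanes} uses the exact count $4\times 5=20$.
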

\begin{proof} (1) Let us  maintain the notation from the proof of parts (\ref{p31-three}), (\ref{p31-four}) of Proposition~\ref{prpBasic}. In particular  
$\Pi_0 = V(x_0,x_1)$,  and  $\FX$ can be written as \eqref{eq-expx0x1}.  
For $H = V(\lambda_0 x_0 - \lambda_1 x_1)$, the surface $Q(H)$ is given by $(\lambda_0 \fgg +\lambda_1 \hhh)$.
This shows that $Q$ is a quartic for every $H$. Moreover, by \eqref{eq-always-double}, $P_1$, $\ldots$, $P_4$ are double points on $Q$. 
By Bertini,  $Q(H)$ is smooth away from  $P_1$, $\ldots$, $P_4$ for general $H$. Finally, the quartic $Q$ is nodal for general $H \in |{\mathcal O}(1) - \Pi_0|$
by (2).

\noindent   
(2) Assume that $P_j = (0:\ldots:0:1)$ (i.e., $j=1$). Then the tangent cone $C_{P_1}X$ is a cone over a smooth cubic surface given by \eqref{eq-tang-cone}.
It contains $\Pi_0$ which is a cone over a line $\ell$ on the cubic  \eqref{eq-tang-cone}, so the hyperplane section $(C_{P_1}X) \cap H$  is a cone over a degree three curve which contains $\ell$.

Assume that  the conic residual to $\ell$ on a hyperplane section of the cubic surface  is irreducible. Then $C_{P_1}X \cap H$  splits into $\Pi_0$ and the cone over the conic.
Moreover, by \eqref{eq-always-double}, the intersection $\Pi_0 \cap Q$ has always a double point in $P_1$, so $\Pi_0$ cannot be a component of the tangent cone $C_{P_1}Q$.
Thus $C_{P_1}Q$ is a cone over a smooth quadric curve and 
$P_1$ is an $A_1$ singularity. 

Since there are 
10 lines on the cubic  \eqref{eq-tang-cone}  that meet the line $\ell$, there are precisely five hyperplanes for which the residual conic is reducible. Those hyperplanes yield the degeneration of the $A_1$ singularity.
\end{proof}

Let $X'$ be the blowup of $X$ along $\Pi_0$. Since $\Pi_0$ is a Cartier divisor on $X$ away from the triple points the blow-up is isomorphism away from the points $P_1,\ldots,P_4$, 
and each $P_j \in \Pi_0$ is replaced by a smooth rational curve on $X'$. In particular, from \eqref{eq-euler-char-singular} we obtain:
\begin{equation} \label{eq-euler-char-bl-plane-one}
\chi(X')=-196+16t.
\end{equation}
Let $\pi\colon X'\to \Ps^1$ be the morphism  induced on $X'$ by the projection from $\Pi_0$. One can easily see that its fibers are isomorphic to the quartic surfaces $Q(H)$, which were introduced in Proposition~\ref{prop-fibers}.
Thus, Proposition~\ref{prop-fibers} implies for  general  $P \in \Ps^1$, that we have $\chi(\pi^{-1}(P)) = 20$, and as in \cite[Proposition~III.11.4.ii]{bphv}
\begin{equation} \label{eq-euler-char-bl-plane-two}
\chi(X')=20 \chi(\Ps^1) + \sum_{P\in \Ps^1} (\chi(\pi^{-1}(P))-20)
\end{equation}

In order to state  Proposition~\ref{prop-epsilon} we introduce  the following notation.
\begin{notation}\label{notStd}
Assume that the quintic threefold $X$ satisfies [\Ajeden], [\Adwa], and  $\sing(X) \cap \Pi_0 = \{P_1,\dots,P_4\}$. Let  $H \in |{\mathcal O}(1) - \Pi_0|$  and let $Q=Q(H)$ be given by \eqref{eq-def-qh}. Then
\begin{itemize}

\item
$r(Q)$ is the number of points  in  $\sing(X) \cap (Q \setminus \Pi_0)$, 

\item
$s(Q)$ is the number of points $P_i \in \Pi_0$ such that $P_i$ is not a node of the quartic $Q$ 

\item 
$\epsilon(Q):=s(Q)+8r(Q)$.
\end{itemize}
\end{notation}
The proof of  Proposition~\ref{corPlanes} is based on careful study of Euler numbers of non-nodal quartic surfaces:
\begin{proposition} \label{prop-epsilon}
 Let  $H \in |{\mathcal O}(1) - \Pi_0|$. 
If $\chi(Q(H))+\epsilon(Q(H)) > 20$, then one of the following holds.
\begin{enumerate}
\item $Q$ is the union of an irreducible cubic surface and a plane. The cubic surface has 4 double points, two of them are also on the plane $\Pi_0$. We have $s(Q)=2$, $r(Q)=2$ and $\chi(Q)+\epsilon (Q) \leq 24$.
\item $Q$ is the union of two quadric cones, such that the vertex of each cone is contained on the other cone. In this case we have $\chi(Q) =  3$, $s(Q)=4$, $r(Q)=2$ and $\chi(Q)+\epsilon(Q) =  23$. 
\item $Q$ is the union of two planes and a quadric. In this case we have $\chi(Q) \leq 6$, $s(Q)=4$, $r(Q)=2$ and therefore  $\chi(Q)+\epsilon(Q) \leq 26$.
\end{enumerate}
\end{proposition}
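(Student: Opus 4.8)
The plan is to classify the quartic surfaces $Q = Q(H)$ satisfying $\chi(Q) + \epsilon(Q) > 20$ by first showing that such $Q$ must be reducible, and then analyzing each reducibility type. First I would record the baseline: by Proposition~\ref{prop-fibers}(1), a general $Q$ is a nodal quartic with exactly four nodes $P_1, \dots, P_4$, hence $\chi(Q) = 20$, $s(Q) = r(Q) = 0$, and $\epsilon(Q) = 0$. For the inequality $\chi(Q) + \epsilon(Q) > 20$ to hold we need either extra singularities (which boosts $\chi$ and the $r, s$ counts) or a non-reduced/reducible $Q$. Note $Q$ is reduced: if $Q$ had a multiple component, then writing $\FX = x_0 f_0 + x_1 f_1$ with $Q = V(\lambda_0 f_0 + \lambda_1 f_1)$ on $H$, a square factor in $\lambda_0 f_0 + \lambda_1 f_1$ would force a whole curve of singularities on $X$, contradicting [\Ajeden] (this is the same argument as in the proof of Proposition~\ref{prpBasic}.\ref{p31-six}). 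So $Q$ is a reduced quartic surface.

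Next I would dispose of the irreducible case. An irreducible quartic surface in $\Ps^3$ has $\chi \le 24$ with equality only for a specific rational case, and more relevantly, the possible values of $\chi$ for quartics with isolated singularities are bounded: $\chi(Q) = 24 - \mu$-type corrections are not available directly, but one can argue via the minimal resolution $\tilde Q \to Q$: $\chi(\tilde Q) = \chi(Q) + \sum_{p \in \sing Q}(\chi(\text{exc. fiber}) - 1)$, and $\chi(\tilde Q) \le 24$ when $\tilde Q$ is a smooth surface dominated by a K3 or rational surface (the only possibilities for minimal resolutions of quartic surfaces with isolated singularities). An irreducible $Q$ has at most finitely many singular points lying on $X$, so $r(Q)$ and $s(Q)$ are controlled; combining the bound on $\chi(\tilde Q)$ with the contributions of the singularities, one checks the inequality $\chi(Q) + \epsilon(Q) > 20$ cannot be achieved. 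The cleanest route: since a general fiber has $\chi = 20$, equation~\eqref{eq-euler-char-bl-plane-two} bounds $\sum_P(\chi(\pi^{-1}(P)) - 20)$ globally, but for the local statement I think the surface-classification argument is more direct. The point is that ordinary triple points of $X$ on $Q \setminus \Pi_0$ are heavy (contributing $8$ each to $\epsilon$), but geometrically they force $Q$ to contain many lines through them (Proposition~\ref{prpBasic}.\ref{p31-one}), pushing $Q$ toward reducibility.

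Then I would treat the reducible case by degree: $Q = $ (plane) $+$ (cubic), or $Q = $ (plane) $+$ (plane) $+$ (quadric), or $Q = $ (quadric) $+$ (quadric), or $Q = $ four planes, etc. Each plane component $\Pi \subset X$ meets $\sing(X)$ in exactly four points by Proposition~\ref{prpBasic}.\ref{p31-four}, and by Observation~\ref{obs-useful} two such planes in the same hyperplane meet along a line with exactly two triple points. For $Q = $ plane $+$ cubic with the cubic having isolated singularities, the intersection of the plane and cubic is a plane cubic curve, the singularities of the cubic surface on that curve are forced by Bézout-type counts as in the proof of Proposition~\ref{prpBasic}.\ref{p31-seven}; I would show the cubic has exactly $4$ double points, two on $\Pi_0$ (giving $s(Q) = 2$) and two off (giving $r(Q) = 2$), then compute $\chi(Q)$ of the reduced union via inclusion-exclusion $\chi(Q) = \chi(\text{plane}) + \chi(\text{cubic}) - \chi(\text{intersection curve})$, and bound it to get $\chi(Q) + \epsilon(Q) \le 24$. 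For $Q = $ two quadric cones, the vertices are the triple points off $\Pi_0$; here I compute $\chi$ of the union of two cones meeting in a curve directly, getting $\chi(Q) = 3$. For $Q = $ two planes $+$ quadric, the two plane components contribute $s(Q) = 4$ and the quadric's relation to the planes gives $r(Q) = 2$, and $\chi(Q) \le 6$ for the reduced union of two planes and a quadric. Cases like four planes or plane $+$ irreducible cubic cone are excluded because they would force too many triple points in a single hyperplane, contradicting Proposition~\ref{prpBasic}.\ref{p31-six}.

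The main obstacle will be the bookkeeping in the reducible cases — precisely pinning down $r(Q)$ and $s(Q)$ and the exact topological Euler characteristic of each (possibly singular, reduced) reducible quartic. The subtlety is that the components intersect in curves that may themselves be singular or reducible, so the inclusion-exclusion $\chi(A \cup B) = \chi(A) + \chi(B) - \chi(A \cap B)$ must be applied carefully, and one must verify that all triple points of $X$ lying on $Q$ are accounted for correctly by the constraints from Section~\ref{secBas} (no three collinear, at most four on a plane, at most six in a hyperplane). The deeper structural input — that a triple point off $\Pi_0$ lying on $Q$ actually forces $Q$ to be reducible of the specific listed type — comes from the tangent cone analysis in Proposition~\ref{prop-fibers}(2) together with the line configurations of Proposition~\ref{prpBasic}.
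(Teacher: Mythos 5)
There is a genuine gap, and it sits exactly where the paper spends most of its effort: the irreducible case. Your sketch for irreducible $Q$ only addresses isolated singularities (where the bound is indeed a one-line Milnor-number computation, as in the paper's first lemma of Section~\ref{secFib}), but the dangerous irreducible quartics are the ones with \emph{non-isolated} singularities --- a double line, a double conic, or a union of such. For those, your formula $\chi(\tilde Q)=\chi(Q)+\sum_{p}(\chi(\text{exc.\ fiber})-1)$ does not apply, since the exceptional locus is a curve, and the assertion that ``one checks the inequality cannot be achieved'' is precisely the content one has to prove. Note that $\epsilon(Q)=s(Q)+8r(Q)$ can a priori reach $20$, so one must either pin down $r(Q)$ and $s(Q)$ for each shape of $\Sigma$ or obtain strong upper bounds on $\chi(Q)$; the paper does this via the classification of $\Sigma$ (Proposition~\ref{prpClass}), the constraint that a triple point of $X$ off $\Pi_0$ must lie on every component of $\Sigma$ (Lemma~\ref{lemm-Roneach}), the ruled/non-ruled dichotomy (Propositions~\ref{prpIrrRuled} and~\ref{prpIrrNonRuled}), and a detailed study of the pencil $|\mathcal{O}_Q(1)-2\ell|$ of residual conics in Appendix~\ref{appPen} (Proposition~\ref{prpLine}), including counting singular and tangent members and controlling the transversal singularity type along $\ell$. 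None of this is replaced by your resolution argument.

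A second, related misstatement: you say that a triple point of $X$ on $Q\setminus \Pi_0$ ``forces $Q$ to be reducible of the specific listed type.'' It does not --- irreducible $Q$ can carry such points (with $r(Q)=1$, contributing $\mu\geq 8$ to the Milnor sum, or even $r(Q)=2$ with the points on a double line, a configuration the paper must explicitly rule out inside the proof of Proposition~\ref{prpLine} using Lemma~\ref{lemSingLine}). What forces reducibility is the full inequality $\chi(Q)+\epsilon(Q)>20$, and establishing that implication for irreducible $Q$ is the technical core of the proposition. Your treatment of the reducible cases (inclusion--exclusion for $\chi$, the counts of $r$ and $s$ via Proposition~\ref{prpBasic} and Observation~\ref{obs-useful}, exclusion of four planes by Proposition~\ref{prpBasic}.\ref{p31-eight}) matches the paper's Lemmata~\ref{lemCubSur}--\ref{lemQuaSur} in outline and would go through, but without the irreducible non-isolated case the proof is incomplete.
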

For the sake of clarity of exposition  we give now the  proof of  Proposition~\ref{corPlanes}, and present the proof of
 Proposition~\ref{prop-epsilon} in  Section~\ref{secFib}.  

\begin{proof}[{Proof of Proposition~\ref{corPlanes}}]
Let $\Delta$ be the set of points $R \in \Ps^1$ such that $\pi^{-1}(R)$ is not (isomorphic to) a nodal quartic with exactly four singularities. 
For each  point $R \in \Delta$ we define the non-negative integer:
$$
\dedelta(R):= \max \{0, \chi(\pi^{-1}(R))+\epsilon(\pi^{-1}(R))-20 \}
$$
From Proposition~\ref{prop-fibers}.2 one obtains  $\sum_{R\in \Delta} \epsilon(\pi^{-1}(R))=8(t-4)+20$, so \eqref{eq-euler-char-bl-plane-two}
yields
\begin{equation} \label{eq-playing-with-numbers}
\chi(X')+20+8(t-4)=20\cdot 2 +\sum_{R\in \Delta} (\chi(\pi^{-1}(R))+\epsilon(\pi^{-1}(R))-20)
\end{equation}
Observe that the right hand side of \eqref{eq-playing-with-numbers} is bounded by $40+\sum_{R \in \Delta} \dedelta(R)$. From \eqref{eq-euler-char-bl-plane-one} we obtain the inequality
\[ 24t\leq 248 +\sum_{R \in \Delta} \dedelta(R)\]
 
If $t>10$ then $t=11$ by Lemma~\ref{lemSpec} and we have
\begin{equation} \label{eq-sixteen}
 16 \leq \sum_{R \in \Delta} \dedelta(R).
\end{equation}
By Proposition~\ref{prop-epsilon}, if  $\dedelta(R)$ is (strictly) positive, then  we have $r(\pi^{-1}(R)) =2$.  Since $t=11$, we can have at most three points $R \in \Delta$  with $\dedelta(R)>0$.

Moreover, Proposition~\ref{prop-epsilon} implies that  $\dedelta(R)\leq 6$, so there are   at least two fibers satisfying $\dedelta \geq 5$ and a third fiber satisfying $\dedelta\geq 4$.
\end{proof}

\section{Reduction to the Segre configuration}\label{secSegre}
It is well-known (see e.g. \cite{scub}, \cite[Section 3.2]{hunt}) that there is a unique  cubic threefold with 10 nodes (up to an automorphism of $\Ps^4(\C)$). 
The 10 nodes of such a cubic threefold form the so-called \emph{Segre configuration}. This configuration can be realised as the set of 10 such  points that each 
has exactly three coordinates equal to $1$ and the remaining two coordinates are equal to $(-1)$. 
However, to simplify our calculations  we will use a different  representation of the Segre  configuration in this section.

Below we show the following result:
\begin{proposition} \label{prop-segreconf}
Let $X \subset \Ps^4(\C)$ be a quintic threefold. Assume  that

{\rm [\Aoldtwo]} $X$ contains a $2$-plane $\Pi_0$,

{\rm [\Aoldone]} $\sing(X)$  consists of 11 ordinary triple points $P_1$, $\ldots$, $P_{11}$.

\noindent
Then a subset of $\sing(X)$ forms the Segre configuration.
\end{proposition}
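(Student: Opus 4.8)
The plan is to use Proposition~\ref{corPlanes} to place many $2$-planes on $X$ and then to exploit the combinatorics of their mutual intersections together with the incidence restrictions of Proposition~\ref{prpBasic}. Since $X$ satisfies [\Ajeden], [\Adwa] and, by [\Aoldone], has $11$ triple points, Proposition~\ref{corPlanes} applies: there are hyperplanes $H_1,H_2,H_3\in|\cO(1)-\Pi_0|$ such that $Q(H_1),Q(H_2)$ are each the union of two $2$-planes and a quadric, while $Q(H_3)$ is either of the same shape or a $2$-plane together with an irreducible cubic surface with isolated singularities. Writing $X\cap H_i=\Pi_0\cup\Pi_i\cup\Pi_i'\cup(\text{residual})$, with $\Pi_i'$ present except possibly for $i=3$, one notes that no $H_i$ carries a fourth $2$-plane by Proposition~\ref{prpBasic}.\ref{p31-eight}, and that planes $\Pi_i,\Pi_i'$ coming from different $H_i$ cannot coincide, since they would otherwise both lie in $H_a\cap H_b=\Pi_0$ and hence equal $\Pi_0$. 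Thus $X$ contains at least five $2$-planes $\Pi_1,\dots,\Pi_5$ distinct from $\Pi_0$. The analysis of Section~\ref{secPlane} (and Proposition~\ref{corPlanes} itself) used only [\Ajeden] and the presence of a single $2$-plane, so I would re-run it with each of $\Pi_1,\dots,\Pi_5$ in the role of $\Pi_0$ to produce whatever further $2$-planes and constraints are needed.

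The heart of the argument is then combinatorial. For any two $2$-planes on $X$: each carries exactly four triple points (Proposition~\ref{prpBasic}.\ref{p31-four}), no three of which are collinear (\ref{p31-two}); two such planes that are coplanar inside a hyperplane meet along a line containing exactly two triple points (Observation~\ref{obs-useful}); and no line lies on three of the planes (\ref{p31-five}). Starting from $\sing(X)\cap\Pi_0=\{P_1,\dots,P_4\}$, the lines $\Pi_0\cap\Pi_i$ and $\Pi_0\cap\Pi_i'$ thus account for at least five of the six sides of the complete quadrilateral on $P_1,\dots,P_4$, which already constrains how the triple points of $X$ distribute over the hyperplanes $H_i$ (recall that \ref{p31-six} bounds $\#(\sing(X)\cap H)$ by six) and over the planes $\Pi_i,\Pi_i'$. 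Propagating these incidences through the projections from $\Pi_1,\dots,\Pi_5$ --- and, in the case where $Q(H_3)$ is a plane plus a cubic, keeping track of the four double points forced on that cubic --- I would show that, after renumbering, ten of the points $P_1,\dots,P_{11}$ together with the $2$-planes through them realise exactly the node--plane incidence of the Segre cubic: ten points labelled by the $3$-element subsets of a $6$-element set modulo complementation, coplanar precisely along the perfect matchings.

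Finally, after normalising coordinates so that a suitable subcollection of these $2$-planes is put in standard position, a direct computation determines the coordinates of the ten points and exhibits them as a projective image of the Segre configuration --- equivalently, one recognises them as the nodes of the essentially unique $10$-nodal cubic threefold (\cite{scub}, \cite[Section 3.2]{hunt}). The eleventh triple point is irrelevant for this statement and is dealt with separately in Section~\ref{sec11}.

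I expect the main obstacle to be the combinatorial step: organising the case split coming from Proposition~\ref{corPlanes} (one versus two extra $2$-planes in $H_3$, and controlling the residual cubic in the former case), and, above all, verifying that the $2$-planes produced by iterating the Section~\ref{secPlane} argument close up precisely into the Segre incidence pattern rather than into some a priori different --- and, as one must then also check, geometrically impossible --- configuration. The genuine content is the rigidity of this configuration, not the mere production of the planes.
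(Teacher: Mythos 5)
Your overall strategy --- use Proposition~\ref{corPlanes} to produce at least five further $2$-planes on $X$, then project from them and propagate coplanarity constraints (this is exactly what Lemma~\ref{lem-fibers} packages) until the ten points are pinned down --- is the same as the paper's. But there is a genuine gap at the step you yourself identify as the heart of the matter. You assert that the eleventh triple point is irrelevant to this statement and that the incidence combinatorics of ten points and their planes alone forces the Segre pattern. That cannot work: the paper points out immediately after the statement of Proposition~\ref{prop-segreconf}, and exhibits in Examples~\ref{example-sevtwo} and~\ref{example-sevthree}, quintic threefolds with ten ordinary triple points, four of them on a $2$-plane, whose singular set does \emph{not} form the Segre configuration. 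Those examples satisfy every incidence restriction you invoke (four points per plane, no three collinear, two points on each pairwise intersection line, no three planes through a line, at most six points per hyperplane). So the ten-point-plus-planes configuration is not rigid, and the ``closing up'' into the Segre incidence that your plan relies on is simply false without further input.

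The further input in the paper's proof is precisely $P_{11}$, together with two computations that are not incidence-combinatorial. The projections reduce the unknowns to essentially one parameter $\aaa$ (Lemmata~\ref{lem-coord}--\ref{lem-final}); one degenerate branch is killed by Lemma~\ref{lem-reducible} (every quintic with triple points at those ten points is reducible --- a Gr\"obner-basis computation); and the remaining case $\aaa^5=1$, $\aaa\neq 1$ is excluded by writing $P_{11}=(1:c_1:c_2:c_3:c_4)$, noting that Lemma~\ref{lem-fibers}.1 forces $\psi(P_{11})$ to coincide with one of two prescribed fiber values for each of five plane-projections, and checking that the resulting system of five quadrics in the $c_i$ has no solution. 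Your proposal omits the only mechanism that distinguishes the Segre configuration from the genuinely occurring non-Segre ten-point configurations, so as written it would stall exactly where you predict the difficulty lies.
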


The proof of Proposition~\ref{prop-segreconf} will be preceded by several  lemmata. It should be pointed out that the claim of Proposition~\ref{prop-segreconf}
does not hold for quintics with $10$ ordinary triple points, four of which lie on a $2$-plane (see Example~\ref{example-sevtwo},~\ref{example-sevthree}). 

Whenever we speak of  [\Aoldtwo],[\Aoldone] we mean the conditions stated in Proposition~\ref{prop-segreconf}.  
We say that $P_1, \ldots, P_4$ are {\sl coplanar} if and only if  $\langle P_1, \ldots, P_4 \rangle$ is two-dimensional. 
Whenever it leads to no ambiguity  
we say that a variety is {\sl contained in the fiber of a rational map} if and only if it is contained in its Zariski-closure.

At first we 
carry out a convenient change of coordinates.

\begin{lemma} \label{lem-coord}
Suppose that $X$ satisfies {\rm  [\Aoldtwo],[\Aoldone]}. Then we can  assume that there exist $a_1,\dots, a_4,b_3,b_4 \in \C$ such that the following equalities hold:
\begin{equation} \label{eq-10points}
\begin{matrix}
P_1 = (0\colon 0\colon 1\colon 0\colon 0), & & 
P_2 = (0\colon 0\colon 0\colon 1\colon 0), \\
P_3 = (0\colon 0\colon 0\colon 0\colon 1), & & 
P_4 = (0\colon 0\colon 1\colon 1\colon 1), \\
P_5  = (0\colon 1\colon 0\colon 0\colon 0), & & 
P_6 = (0\colon 1\colon 1\colon 1\colon 0), \\
P_7 = (1\colon 0\colon 0\colon 0\colon 0), & & 
P_8 = (1\colon 0\colon 1\colon 0\colon 1), \\
P_9 = (1\colon \aaa\colon \bb\colon \cc \colon \dd), & & 
P_{10} = (1\colon \aaa\colon \bb\colon \ee\colon \ff) .\\
\end{matrix}
\end{equation}
\end{lemma}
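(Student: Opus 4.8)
The plan is to place the eleven triple points in convenient coordinates step by step, using only the combinatorial restrictions on their configuration coming from Proposition~\ref{prpBasic}. The key structural input is that $P_1,\dots,P_4$ are the four triple points lying on $\Pi_0$ (Proposition~\ref{prpBasic}.\ref{p31-four}), that no three triple points are collinear (Proposition~\ref{prpBasic}.\ref{p31-two}), and that a hyperplane contains at most six triple points (Proposition~\ref{prpBasic}.\ref{p31-six}). In particular, since $\Pi_0$ already carries $P_1,\dots,P_4$, the hyperplane $\langle \Pi_0, P_j\rangle$ for any $j\ge 5$ contains at least five triple points, and the residual quartic $Q$ carries the remaining ones; more importantly, any hyperplane we write down can contain at most six of the $P_i$, which is what will force the coordinates of $P_9,P_{10}$ to share a pattern with earlier points.

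First I would work inside $\Pi_0=V(x_0,x_1)$: the four points $P_1,\dots,P_4$ are coplanar and, by Proposition~\ref{prpBasic}.\ref{p31-two}, no three of them are collinear, so they are four points in general position in the plane $\Pi_0\cong\Ps^2$. Hence a projective change of coordinates on $\Pi_0$ (extended arbitrarily to $\Ps^4$) puts them at the four standard reference points $(0:0:1:0:0)$, $(0:0:0:1:0)$, $(0:0:0:0:1)$, $(0:0:1:1:1)$, which fixes the first four rows of \eqref{eq-10points}. This uses up the $\GL_3$ acting on $\Pi_0$; what remains of the coordinate freedom is the subgroup of $\GL_5$ preserving $\Pi_0$ and acting as this fixed transformation on it — essentially a group acting on $x_0,x_1$ together with a unipotent part mixing $x_0,x_1$ into $x_2,x_3,x_4$ and rescaling. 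Next I would use the freedom in the $(x_0,x_1)$-plane: $P_5,\dots,P_{11}$ are the seven triple points off $\Pi_0$, so none of them has $x_0=x_1=0$. Choosing $P_5$ and $P_7$ to be the points where two of these rows project to the two coordinate points of the $\Ps^1$ of directions transverse to $\Pi_0$ lets me normalise $P_5=(0:1:0:0:0)$ and $P_7=(1:0:0:0:0)$ (the vanishing of the last three coordinates for $P_5$, resp. $P_7$, is then arranged by the unipotent part). Then I would normalise $P_6$ and $P_8$: once $P_5,P_7$ are fixed, the residual torus (diagonal scalings of $x_2,x_3,x_4$, plus an overall scalar) together with the remaining unipotent transformations can be used to bring one further point in the hyperplane $x_4=0$ to $P_6=(0:1:1:1:0)$ and one in $x_3=0$ to $P_8=(1:0:1:0:1)$; here I must check, using Proposition~\ref{prpBasic}.\ref{p31-two}, \ref{p31-three}, that such choices of $P_6,P_8$ with these patterns of zeros are actually available (e.g. that not too many of $P_5,\dots,P_{11}$ can be forced onto a common plane or line, which would obstruct the normalisation). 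Finally, after all this the coordinate freedom is exhausted, and the two remaining points $P_9,P_{10}$ simply get names $(1:a_1:a_2:a_3:a_4)$ and $(1:a_1:a_2:b_3:b_4)$ — the point being that their first three coordinates agree. That agreement is not a free choice: it comes from the constraint that the hyperplane $\langle P_7,P_9,P_{10},\dots\rangle$ or rather an appropriate hyperplane through enough already-placed points cannot contain too many triple points, which forces $P_9$ and $P_{10}$ to lie on a common line through $P_{11}$ (the twelfth relation), and after normalising $P_{11}$ and the line this becomes the stated coincidence of coordinates; I would spell out exactly which Proposition~\ref{prpBasic} item gives the line $\langle P_9,P_{10},P_{11}\rangle$-type relation.

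The main obstacle I expect is bookkeeping of the residual group at each stage and verifying that the geometric genericity needed for each normalisation step actually holds — i.e. that the configuration of the seven points off $\Pi_0$ is generic enough (no unexpected collinearities or coplanarities beyond those forced) that $P_5,P_6,P_7,P_8$ can be simultaneously moved to the indicated special positions, and that the last coordinate freedom precisely suffices to force the shared pattern $(1:a_1:a_2:\ast:\ast)$ for $P_9,P_{10}$ rather than merely for one of them. This is where the hyperplane bound (Proposition~\ref{prpBasic}.\ref{p31-six}) does the real work: one must locate, among the $\binom{11}{4}$ hyperplanes spanned by quadruples of triple points, the ones through $P_7$ (say) that would be over-full unless $P_9$ and $P_{10}$ are coordinated, and there is some case-checking to confirm no alternative configuration survives. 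I would present this as a sequence of short normalisation claims, each reducing the residual symmetry group, rather than a single change of variables.
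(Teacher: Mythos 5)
Your plan has a genuine gap: you are trying to obtain the special coordinate patterns of $P_6$, $P_8$, $P_9$, $P_{10}$ by spending residual coordinate freedom, but after $P_1,P_2,P_3,P_5,P_7$ are sent to the five standard points and $P_4$ to $(0\colon 0\colon 1\colon 1\colon 1)$, essentially only a torus remains (and it is then used up rescaling $x_0$ and $x_1$). The shape $P_6=(0\colon 1\colon 1\colon 1\colon 0)$ is not a normalisation at all: it encodes the two incidence relations that $P_1,P_2,P_5,P_6$ and $P_3,P_4,P_5,P_6$ are each coplanar, and similarly $P_8$ encodes that $P_1,P_3,P_7,P_8$ and $P_2,P_4,P_7,P_8$ are coplanar, while the agreement of the first three coordinates of $P_9$ and $P_{10}$ encodes that $P_2,P_3,P_9,P_{10}$ are coplanar. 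These are geometric facts that must be \emph{proved}, and the paper proves them using Proposition~\ref{corPlanes}: the projection from $\Pi_0$ has three reducible fibers $H_1,H_2,H_3$, each containing extra $2$-planes of $X$, and each such $2$-plane carries four triple points (Proposition~\ref{prpBasic}.\ref{p31-four}) with the pairing controlled by Observation~\ref{obs-useful}. The pairs $(P_5,P_6)$, $(P_7,P_8)$, $(P_9,P_{10})$ are precisely the off-$\Pi_0$ singularities in $H_1$, $H_2$, $H_3$, and the planes $\Pi_1=\langle P_1,P_2,P_5,P_6\rangle$, $\Pi_2=\langle P_3,P_4,P_5,P_6\rangle$, etc., are components of those fibers. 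Your proposal never invokes Proposition~\ref{corPlanes}, and Proposition~\ref{prpBasic} alone (no three collinear, at most six on a hyperplane) does not supply these coplanarities.

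A second concrete error: you propose to force the shared pattern of $P_9,P_{10}$ by arguing that they ``lie on a common line through $P_{11}$''. Three triple points can never be collinear by Proposition~\ref{prpBasic}.\ref{p31-two}, so this mechanism is impossible; the actual reason is the $2$-plane $\langle P_2,P_3,P_9,P_{10}\rangle\subset X$ coming from the third reducible fiber $H_3$, since the line $\langle P_2,P_3\rangle=V(x_0,x_1,x_2)$ forces the first three coordinates of any point of that plane to be proportional to those of $P_9$. To repair the proof you would need to first establish Proposition~\ref{corPlanes} (or at least the existence of the three reducible fibers and their component planes), derive the coplanarity relations \eqref{eq-1256}, \eqref{eq-1378}, \eqref{eq-23910} from Observation~\ref{obs-useful} and Proposition~\ref{prpBasic}.\ref{p31-five}, and only then carry out the coordinate normalisation, which at that point is routine.
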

\begin{proof} Recall that $\Pi_0$ contains exactly  four triple points of $X$ by  Proposition~\ref{prpBasic}.\ref{p31-four}. We call them $P_1$, $\ldots$, $P_4$.

By   Proposition~\ref{corPlanes} there exist hyperplanes $H_1$, $H_2$, $H_3$  that contain the plane $\Pi_0$ and such that  each of $H_1 \cap X$, $H_2 \cap X$  splits  into three $2$-planes and a quadric,
whereas  $X\cap H_3$ consists either of three $2$-planes and a quadric, or two $2$-planes and a cubic. 

By Observation~\ref{obs-useful}  each $H_j$ contains exactly six singularities of $X$.
We put $P_5$, $P_6$ (resp.  $P_7$ and $P_8$, resp. $P_9$ and $P_{10}$) to denote  the triple points on the hyperplane $H_1$ (resp. $H_2$, resp. $H_3$) that do not belong to $\Pi_0$.

Let   $\Pi_1$, $\Pi_2$ be the $2$-planes residual to $\Pi_0$ in  $H_1 \cap X$. It follows from Observation~\ref{obs-useful}
that the $2$-planes $\Pi_1$, $\Pi_2$ meet along a line that 
contains exactly two singularities of $X$, i.e.,   both these planes contain $P_5$, $P_6$ and we can relabel the $P_i$'s, for $i=1,2,3,4$ such that 
\begin{equation} \label{eq-1256}
P_1,P_2,P_5, P_6  \in \Pi_1 \mbox{ and } P_3, P_4, P_5, P_6 \in \Pi_2 \, .
\end{equation}

Since $\Pi_0$ and $\Pi_1$ meet along the line $\langle P_1,P_2 \rangle$, 
Proposition~\ref{prpBasic}.\ref{p31-five} implies that the point $P_2$ does not belong to the plane  $\langle P_1,P_7,P_8 \rangle$. Hence, after swapping $P_3$, $P_4$ if necessary, Observation~\ref{obs-useful} allows us to  assume that 
\begin{equation} \label{eq-1378}
P_1,P_3,P_7,P_8 \mbox{ and } P_2,P_4,P_7,P_8 \mbox{ are coplanar.}
\end{equation}

Finally, one of the $2$-planes contained in $H_3 \cap X$ has to go through $P_9,P_{10}$ and either through $P_1,P_4$ or $P_2,P_3$ (this is Observation~\ref{obs-useful} combined with Proposition~\ref{prpBasic}.\ref{p31-five} again).
 We still may  swap $(P_1,P_3)\leftrightarrow (P_2,P_4)$. Hence we may  assume that
\begin{equation} \label{eq-23910}
P_2,P_3,P_9,P_{10} \mbox{ are coplanar.}
\end{equation}

By Proposition~\ref{prpBasic} the points $P_1,P_2,P_3,P_5$ and $P_7$ are in general linear position. 
Thus we may assume that $P_1,P_2,P_3,P_5$ and $P_7$ are as in \eqref{eq-10points}.

Moreover, since $P_4 \in \Pi_0$ we arrive at  $P_4=(0\colon 0\colon \alpha\colon \beta\colon \gamma)$, with $\alpha\beta\gamma\neq0$. After rescaling $x_3$, $x_4$ we obtain $\alpha=\beta=\gamma=1$, i.e., $P_4$ is as claimed.

From \eqref{eq-1256} we have $P_6 =(0\colon 1\colon \eta\colon \eta\colon 0)$ with $\eta\neq 0$. After rescaling $x_1$ we may assume that $\eta=1$. 
Furthermore, by \eqref{eq-1378}, we have  $P_8=(1\colon 0\colon \omega\colon 0\colon \omega)$ and after rescaling $x_0$ we assume that $\omega=1$. Thus $P_6$, $P_8$  are as as desired. 

Finally, we can find $\aaa$, $\bb$, $\cc$, $\dd$ such that $P_9=(1\colon \aaa\colon \bb\colon \cc\colon \dd)$. Then  \eqref{eq-23910} implies that $P_{10}=(1\colon \aaa\colon \bb\colon \ee\colon \ff)$ for some  $\ee,\ff \in \C$.
\end{proof}
In order to prove Proposition~\ref{prop-segreconf} (i.e., to find constraints on $\aaa$, $\bb$, $\cc$, $\dd$, $\ee$, $\ff$) 
we will use projections from various $2$-planes on $X$.
For this purpose, we collect some obvious consequences of Proposition~\ref{corPlanes} below. 
\begin{lemma}  \label{lem-fibers}
Assume {\rm [\Aoldone]}. Let $\Pi := \langle P_{i_1}$, $P_{i_2}$, $P_{i_3}$, $P_{i_4} \rangle$ be  a $2$-plane on $X$
and let  $\psi: (X \setminus \Pi) \rightarrow \Ps^1(\C)$ be the projection from $\Pi$. 
Then
\begin{enumerate}
\item exactly three fibers of $\psi$ contain a $2$-plane $\neq \Pi$ and each such fiber contains exactly two points from $\sing(X) \setminus \Pi$, 
\item if two different fibers of $\psi$ contain a $2$-plane $\neq \Pi$ each, then at least one of them contains two $2$-planes,  
\item if $\psi^{-1}(\psi(P_{i_5}))$ contains two $2$-planes, one which is $\langle P_{i_3}, P_{i_4}, P_{i_5}, P_{i_6} \rangle$, then 
$\psi^{-1}(\psi(P_{i_5}))$ contains the $2$-plane  $\langle P_{i_1}, P_{i_2}, P_{i_5}, P_{i_6} \rangle$. Hence
$$
P_{i_1}, P_{i_2}, P_{i_5}, P_{i_6}  \mbox{ are coplanar. }
$$
\end{enumerate}
\end{lemma}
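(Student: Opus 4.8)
The plan is to transfer the results of Section~\ref{secPlane} to the plane $\Pi$: by [\Aoldone] the quintic $X$ satisfies [\Ajeden], and $\Pi\subset X$ supplies [\Adwa] (with $t=11$), so Propositions~\ref{corPlanes} and~\ref{prop-epsilon} apply verbatim with $\Pi$ in place of $\Pi_0$. For $R\in\Ps^1$ let $H_R\in|{\mathcal O}(1)-\Pi|$ be the hyperplane with $\psi^{-1}(R)=(X\cap H_R)\setminus\Pi$, and set $Q_R:=Q(H_R)$, so $Q_R$ is a quartic surface with $\psi^{-1}(R)=Q_R\setminus\Pi$ a dense open subset of $Q_R$, and $\Pi\not\subset Q_R$ since $X\cap H_R$ is reduced (Proposition~\ref{prpBasic}.\ref{p31-eight}). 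The bookkeeping identity that drives everything is that each of the $t-4=7$ triple points of $X$ lying off $\Pi$ belongs to $Q_R\setminus\Pi$ for exactly one $R$, whence $\sum_{R\in\Ps^1}r(Q_R)=7$. Applying Proposition~\ref{corPlanes} to $\Pi$ and reading off its proof, there are \emph{exactly three} points $R_1,R_2,R_3\in\Ps^1$ with $\dedelta(R_j)>0$; each $Q_{R_j}$ then contains a plane other than $\Pi$, at least two of the $R_j$ satisfy $\dedelta\ge5$, and Proposition~\ref{prop-epsilon} forces $r(Q_{R_j})=2$ for each $j$.

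For part~(1) I would show that $Q_R$ contains a plane $\neq\Pi$ if and only if $R\in\{R_1,R_2,R_3\}$. One direction is the last sentence above. For the converse, suppose $\Pi'\subset Q_R$ is a plane $\neq\Pi$; then $\Pi'\subset X$, so $\Pi'$ carries exactly four triple points (Proposition~\ref{prpBasic}.\ref{p31-four}), and since $X\cap H_R$ contains the two distinct planes $\Pi,\Pi'$, Observation~\ref{obs-useful}(1) shows that $\Pi\cap\Pi'$ carries exactly two of them while the other two triple points of $\Pi'$ lie off $\Pi$ (a common triple point of $\Pi$ and $\Pi'$ would lie on $\Pi\cap\Pi'$). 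Hence $r(Q_R)\ge2$; but $\sum_R r(Q_R)=7$ and the $R_j$ already contribute $3\cdot2=6$, so $r(Q_R)\le1$ whenever $\dedelta(R)=0$, forcing $R\in\{R_1,R_2,R_3\}$ and then $r(Q_R)=2$. This gives both statements in~(1).

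Parts~(2) and~(3) then come from the classification in Proposition~\ref{prop-epsilon}. A fibre $Q_R$ containing a plane $\neq\Pi$ has $\dedelta(R)>0$, hence is of type (1), (2) or (3); type (2) contains no plane, so $Q_R$ is of type (1) (exactly one plane $\neq\Pi$, $\dedelta(R)\le4$) or of type (3) (exactly two planes $\neq\Pi$, $\dedelta(R)\le6$). As at least two of $R_1,R_2,R_3$ have $\dedelta\ge5$ they are of type (3), so at most one is of type (1); therefore among any two fibres containing a plane $\neq\Pi$ at least one is of type (3), i.e.\ contains two planes $\neq\Pi$, which proves~(2). For~(3): let $\Pi':=\langle P_{i_3},P_{i_4},P_{i_5},P_{i_6}\rangle$ be one of the two planes in $\psi^{-1}(\psi(P_{i_5}))$ (so $\Pi'\neq\Pi$), and $H:=\langle\Pi,P_{i_5}\rangle$ the corresponding hyperplane. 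By~(1) and the above, $Q(H)$ is of type (3), so $X\cap H$ is the union of exactly three planes $\Pi,\Pi',\Pi''$ and a quadric, and carries exactly six triple points of $X$ (Observation~\ref{obs-useful}(1)); since $P_{i_1},\dots,P_{i_4}$ are the triple points on $\Pi$ and $P_{i_5},P_{i_6}$ are two further triple points on $H$ (on $\Pi'$, off $\Pi$), these six are $P_{i_1},\dots,P_{i_6}$. By Observation~\ref{obs-useful}(2) they partition into a pair on $\Pi\cap\Pi'$, a pair on $\Pi\cap\Pi''$, and a pair on $\Pi'\cap\Pi''$; the first pair contains $P_{i_3},P_{i_4}$ and so equals $\{P_{i_3},P_{i_4}\}$, which forces the pair on $\Pi\cap\Pi''$ to be $\{P_{i_1},P_{i_2}\}$ and the pair on $\Pi'\cap\Pi''$ to be $\{P_{i_5},P_{i_6}\}$. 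Hence $\Pi''\supseteq\{P_{i_1},P_{i_2},P_{i_5},P_{i_6}\}$, and since no three triple points are collinear (Proposition~\ref{prpBasic}.\ref{p31-two}) these four points span $\Pi''$; in particular $\psi^{-1}(\psi(P_{i_5}))$ contains $\langle P_{i_1},P_{i_2},P_{i_5},P_{i_6}\rangle$ and these points are coplanar.

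The only step needing genuine care is the counting in part~(1)—that a fibre carrying an extra plane must be one of the three fibres with $\dedelta>0$—which relies on the identity $\sum_R r(Q_R)=t-4$ together with the facts, extracted from the proof of Proposition~\ref{corPlanes}, that there are exactly three such fibres and that $r=2$ on each. Everything else is a routine combination of Observation~\ref{obs-useful} with Propositions~\ref{prpBasic},~\ref{prop-epsilon}, and~\ref{corPlanes}.
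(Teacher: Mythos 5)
Your proof is correct and follows essentially the same route as the paper's: the paper also derives part (1) by combining Proposition~\ref{corPlanes} with the count that each fiber containing an extra $2$-plane carries exactly two of the seven triple points off $\Pi$, obtains (2) from the fact that at least two of the three special fibers are of the ``two planes plus quadric'' type, and gets (3) from Observation~\ref{obs-useful}(2). The paper's proof is just far terser; your write-up supplies the details it leaves implicit.
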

\begin{proof} Each fiber of $\psi$ is given by a hyperplane section $H \cap X$. If $H \cap X$  contains $\Pi$ and another $2$-plane, then
it contains at least $6$ triple points of $X$ by Proposition~\ref{prpBasic}.\ref{p31-four} and Proposition~\ref{prpBasic}.\ref{p31-two}. Thus  Proposition~\ref{prpBasic}.\ref{p31-six}  shows that $H$ contains exactly $6$ triple points of $X$,
four of which are on $\Pi$.    In particular, each fiber with a $2$-plane $\neq \Pi$ contains exactly two points from $\sing(X) \setminus \Pi$.   

\noindent
(1) follows now from Proposition~\ref{corPlanes} and [\Aoldone].

\noindent
(2) is immediate consequence of (1) and Proposition~\ref{corPlanes}.

\noindent
(3) follows from  Observation~\ref{obs-useful}.
\end{proof}

\begin{lemma} \label{lemma-1579}
Suppose  that {\rm  [\Aoldtwo], [\Aoldone]} are satisfied and \eqref{eq-10points} holds. Then (after a change of coordinates if necessary) we can assume that 
\begin{equation} \label{eq-1579}
P_1,P_5,P_7,P_{9} \mbox{ are coplanar. }
\end{equation} 
while {\rm  [\Aoldtwo], [\Aoldone]} and  \eqref{eq-10points} remain valid.
\end{lemma}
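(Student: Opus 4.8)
The plan is to apply the three-planes principle from Lemma~\ref{lem-fibers} to the projection $\psi$ from the $2$-plane $\Pi_1 = \langle P_1, P_2, P_5, P_6\rangle$ (recall $\Pi_1 \subset X$ by \eqref{eq-1256} and Proposition~\ref{prpBasic}.\ref{p31-four}). By Lemma~\ref{lem-fibers}.1, exactly three fibers of $\psi$ contain a $2$-plane different from $\Pi_1$, and each such fiber contains exactly two of the six remaining triple points $P_3, P_4, P_7, P_8, P_9, P_{10}$. Thus these six points are distributed into three coplanar pairs, each pair spanning (with two of $P_1, P_2, P_5, P_6$) an auxiliary $2$-plane in $X$. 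The task is to show that, up to the coordinate changes still available after \eqref{eq-10points}, one of these pairs can be taken to be $\{P_7, P_9\}$ together with two points among $\{P_1, P_2, P_5, P_6\}$, and moreover that the two partner points can be arranged to be $P_1$ and $P_5$.

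First I would rule out the ``wrong'' pairings by combinatorial constraints coming from the earlier coplanarity relations \eqref{eq-1256}, \eqref{eq-1378}, \eqref{eq-23910} and from Proposition~\ref{prpBasic}.\ref{p31-five} (three $2$-planes in $X$ meet in a finite set, so a line carrying two triple points lies on at most two planes of $X$). For instance, $P_7$ and $P_8$ already lie together on the two planes $\langle P_1, P_3, P_7, P_8\rangle$ and $\langle P_2, P_4, P_7, P_8\rangle$ of \eqref{eq-1378}; by Proposition~\ref{prpBasic}.\ref{p31-five} the line $\langle P_7, P_8\rangle$ cannot lie on a third $2$-plane of $X$, so $\{P_7, P_8\}$ is \emph{not} one of the three $\psi$-pairs. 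Hence in the partition of $\{P_3, P_4, P_7, P_8, P_9, P_{10}\}$ the point $P_7$ is paired with one of $P_3, P_4, P_9, P_{10}$. A parallel argument using \eqref{eq-23910} excludes $\{P_9, P_{10}\}$ as a pair (the line $\langle P_9, P_{10}\rangle$ already lies on $\langle P_2, P_3, P_9, P_{10}\rangle$, and pairing them here would force a third $2$-plane through that line together with two of $P_1, P_2, P_5, P_6$, again contradicting Proposition~\ref{prpBasic}.\ref{p31-five} unless it forces further collinearities excluded by Proposition~\ref{prpBasic}.\ref{p31-two}). One then checks that, after possibly relabelling $P_3 \leftrightarrow P_4$ and correspondingly adjusting, the only combinatorially consistent possibility (still compatible with \eqref{eq-1256}, \eqref{eq-1378}, \eqref{eq-23910}) has $P_7$ paired with $P_9$, and that the partner planes in $X$ through $\langle P_7, P_9\rangle$ must, by Lemma~\ref{lem-fibers}.3 and Observation~\ref{obs-useful}, each contain two points of $\{P_1, P_2, P_5, P_6\}$; after a harmless relabelling within $\{P_1, P_5\}$ vs.\ $\{P_2, P_6\}$ (which are interchangeable by the symmetry of \eqref{eq-10points} that fixes $P_7$ and the normalisations already made) we may take $P_1, P_5, P_7, P_9$ to be coplanar.

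Finally I would verify that imposing \eqref{eq-1579} does not destroy the normalisations \eqref{eq-10points}: the coordinate changes used are permutations of the already-fixed points and the residual torus action rescaling the coordinates, and \eqref{eq-1579} is a single incidence condition that was achieved purely by permuting labels among the $P_i$'s that are not yet pinned down by \eqref{eq-10points} together with, at most, a rescaling that can be absorbed. So [\Aoldtwo], [\Aoldone] and \eqref{eq-10points} remain valid.

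The main obstacle I anticipate is the bookkeeping in the second paragraph: one must carefully enumerate the ways the six points $\{P_3, P_4, P_7, P_8, P_9, P_{10}\}$ can be split into three coplanar pairs subject to all the previously established incidences, and show that exactly one pattern (up to the allowed relabellings) survives — a finite but delicate case analysis where the repeated use of Proposition~\ref{prpBasic}.\ref{p31-two}, \ref{prpBasic}.\ref{p31-five} and Observation~\ref{obs-useful} to kill the spurious patterns is where the real work lies.
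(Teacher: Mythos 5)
Your opening move (project from $\Pi_1$, use Lemma~\ref{lem-fibers} to extract a coplanar quadruple of the form $\{P_a,P_b,P_c,P_d\}$ with $a\in\{1,2\}$, $b\in\{5,6\}$, $c\in\{7,8\}$, $d\in\{9,10\}$) is essentially the paper's first step, although your bookkeeping is slightly off: the points of $\sing(X)\setminus\Pi_1$ are \emph{seven} in number ($P_3,P_4,P_7,\dots,P_{11}$), not six, so they are distributed as $2+2+2+1$ rather than into ``three coplanar pairs'' of your six listed points; a priori $P_{10}$ (say) could be the leftover singleton and $P_{11}$ could sit in a pair. The conclusion you want still follows, but only after the counting argument the paper actually makes (one pair is forced to be $\{P_3,P_4\}$, and since $\psi_1(P_7)\neq\psi_1(P_8)$ and $\psi_1(P_9)\neq\psi_1(P_{10})$, at least one of the two remaining pairs must combine one of $P_7,P_8$ with one of $P_9,P_{10}$).

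The genuine gap is in your final reduction. You claim that once some quadruple $\{P_a,P_5,P_7,P_9\}$ with $a\in\{1,2\}$ is known to be coplanar, a ``harmless relabelling within $\{P_1,P_5\}$ vs.\ $\{P_2,P_6\}$'' reduces to $a=1$. No such symmetry survives the normalisations already imposed: any permutation sending $P_2\mapsto P_1$ must, to preserve \eqref{eq-1378}, also swap $P_3\leftrightarrow P_4$, and this destroys \eqref{eq-23910} (it would turn it into ``$P_1,P_4,P_9,P_{10}$ coplanar'', a genuinely different incidence, because the freedom to choose between the two pairings was already spent in deriving \eqref{eq-23910}). The case where only $P_2,P_5,P_7,P_9$ are coplanar — and none of the eight quadruplets \eqref{eq-eight-quadruplets} is — is precisely where the paper does most of its work: it runs two further projections ($\psi_2$ from $\langle P_2,P_5,P_7,P_9\rangle$ and $\psi_3$ from $\langle P_3,P_5,P_7,P_{10}\rangle$) to pin down $\cc=-1$, $\ee=0$, $\aaa=-\ff$, i.e.\ \eqref{eq-strange-points}, and then exhibits an explicit projective transformation $\varphi(x_0:\dots:x_4)=(-x_1:x_0:x_0-x_2:x_0-x_2+x_3:x_4-x_2)$ — not a coordinate permutation or rescaling — which permutes $\{P_1,\dots,P_8\}$ and moves $P_9$ into a position coplanar with $P_1,P_5,P_7$. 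Without this second half your argument does not close; the ``finite but delicate case analysis'' you defer is not a relabelling exercise but requires new geometric input.
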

\begin{proof} At first we claim that (after relabelling the points $P_1$, $\ldots$, $P_{10}$ if necessary)
either \eqref{eq-1579} holds or the points $P_2,P_5,P_7,P_{9}$ are coplanar
and none of the eight quadruplets of points 
\begin{eqnarray} \label{eq-eight-quadruplets}
&& \{P_1,P_5,P_7,P_9\}, \{P_1,P_6,P_7,P_9\}, \{P_1,P_5,P_8,P_9\}, \{P_1,P_6,P_8,P_9\} \\
&& \{P_1,P_5,P_7,P_{10}\}, \{P_1,P_6,P_7,P_{10}\}, \{P_1,P_5,P_8,P_{10}\}, \{P_1,P_6P_8,P_{10}\} \nonumber
\end{eqnarray}
are coplanar.

Indeed,
let $\psi_1(x_0, \ldots, x_4) := (x_0, x_4)$. Since  $\psi_1$ is  the projection from the  $2$-plane $\Pi_1 = \langle P_1,P_2,P_5,P_6 \rangle$,  
we can apply Lemma~\ref{lem-fibers} to study its fibers.

Observe that the fiber $\psi_1^{-1}(\psi_1(P_3))$  contains the $2$-planes $\langle P_1,P_2,P_3,P_4\rangle$, $\langle P_3,P_4,P_5,P_6 \rangle$. 
Thus, by  Lemma~\ref{lem-fibers}.1,  $\psi_1$ maps the remaining five points $P_7$, $\ldots$, $P_{11}$ to three points in $\Ps^1(\C)$. 

Obviously, we have $\psi_1(P_7) \neq \psi_1(P_{8})$. Moreover,  Proposition~\ref{prpBasic}.\ref{p31-two} shows that 
$P_2,P_9,P_{10}$ cannot be  collinear, so $\dd \neq \ff$ and  $\psi_1(P_9) \neq \psi_1(P_{10})$.
Hence one of the points $P_1,P_2$, one of $P_5,P_6$, one of $P_7,P_8$ and one of $P_9,P_{10}$ span a $2$-plane that is contained in a fiber of $\psi_1$ (recall Proposition~\ref{prpBasic}.\ref{p31-five}).

Our choices \eqref{eq-1256}, \eqref{eq-1378}, \eqref{eq-23910} do fix the points $P_1$, $P_2$ and the sets $\{P_l,P_{l+1}\}$ for $l=5,7,9$, but  
 we can  swap $P_5,P_6$ (resp. $P_7$, $P_8$, resp. $P_9$,$P_{10}$). This means, as we claimed at  the very beginning of the proof, that   
either \eqref{eq-1579} holds or the points $P_2,P_5,P_7,P_{9}$ are coplanar
and none of the eight quadruplets \eqref{eq-eight-quadruplets} are coplanar.

In the next step of the proof we  will show that if the points  $P_2,P_5,P_7,P_9$ are coplanar and 
none of the eight quadruplets \eqref{eq-eight-quadruplets} consists of coplanar points, then there exists an $\aaa \in \C$ such that
\begin{equation} \label{eq-strange-points}
P_9=(1\colon \aaa\colon 0\colon -1\colon 0) \mbox{ and } P_{10}=(1\colon \aaa\colon 0\colon 0\colon -\aaa).
\end{equation}

Suppose that $\langle P_2,P_5,P_7,P_9 \rangle$ is two-dimensional (i.e., we have $\bb=\dd=0$) and consider the projection 
$\psi_2(x_0, \ldots, x_4) := (x_2, x_4)$ from that plane.

By direct check, $\psi_2$ maps the six points $P_j$, where $j \neq 2,5,7,9, 11$,  to exactly three points in $\Ps^1(\C)$, so the fiber $\psi_2^{-1}(\psi_2(P_{11}))$ contains no $2$-planes. 

Suppose that the fiber  $\psi_2^{-1}(\psi_2(P_{1}))$
contains two $2$-planes. Since the fiber in question contains $\Pi_1$, Lemma~\ref{lem-fibers}.3 implies that  the points  $P_1,P_6,P_7,P_9$ are coplanar and we are done.

Thus, Proposition~\ref{corPlanes} allows us to assume the both  fibers $\psi_2^{-1}(\psi_2(P_{3}))$, $\psi_2^{-1}(\psi_2(P_{4}))$ contain two $2$-planes. Since  
both the $2$-planes  $\langle P_2,P_4,P_7,P_8\rangle$, $\langle P_2,P_3,P_9,P_{10}\rangle$ lie on $X$, the quadruplets  $P_4,P_5,P_8,P_9$ and $P_3,P_5,P_7,P_{10}$ 
consist of coplanar points by  Lemma~\ref{lem-fibers}.3. In particular, we have  $\cc=-1 \mbox{ and } \ee=0$.

Let $\psi_3$ be the projection from  the plane $\langle P_3,P_5,P_7,P_{10}\rangle$. Suppose that the fiber  $\psi_3^{-1}(\psi_3(P_{1}))$
contains two $2$-planes. By \eqref{eq-1378} and  Lemma~\ref{lem-fibers}.3  the points $P_1,P_5,P_8,P_{10}$ are coplanar and we are done. 
Thus we can assume that  $\psi_3^{-1}(\psi_3(P_{1}))$ contains exactly one plane.   Lemma~\ref{lem-fibers}.2 combined with \eqref{eq-1256} yields that 
the fiber $\psi_3^{-1}(\psi_3(P_{4}))$ contains two planes, one of which is $\Pi_2$. As before we use Lemma~\ref{lem-fibers}.3 to find the other $2$-plane in the fiber in question.
In particular, $P_4,P_6, P_7, P_{10}$ are coplanar, so we have $\aaa=-\ff$. Altogether, we have arrived at \eqref{eq-strange-points}.

To complete the proof assume \eqref{eq-strange-points} and consider the map
\begin{equation*} \label{eq-us-map}
\varphi(x_0\colon x_1\colon x_2\colon x_3\colon x_4) := (-x_1\colon x_0\colon x_0-x_2\colon x_0-x_2+x_3\colon x_4-x_2).
\end{equation*}
One can easily see that the set $\{P_1,\dots P_8\}$ is invariant under $\varphi$, whereas the points $P_1,P_5,P_7,\varphi(P_9)$ are coplanar, which completes the proof.
\end{proof}

\begin{lemma} \label{lemma-2679}
Suppose  that {\rm  [\Aoldtwo], [\Aoldone]} are satisfied and \eqref{eq-10points}, \eqref{eq-1579} hold. Then (after a change of coordinates if necessary) we can assume that 
\begin{equation} \label{eq-2679}
P_2,P_6,P_7,P_9 \mbox{ are coplanar }
\end{equation}
while  {\rm  [\Aoldtwo], [\Aoldone]} and  \eqref{eq-10points},  \eqref{eq-1579}  remain valid.
\end{lemma}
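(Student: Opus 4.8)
The plan is to project $X$ from the $2$-plane $\Pi_3:=\langle P_1,P_5,P_7,P_9\rangle$, which by \eqref{eq-1579} and Proposition~\ref{prpBasic}.\ref{p31-four} lies on $X$. In the coordinates of \eqref{eq-10points} the points $P_1,P_5,P_7$ span $V(x_3,x_4)$, so \eqref{eq-1579} forces $\cc=\dd=0$, i.e.\ $P_9=(1:\aaa:\bb:0:0)$ and $\Pi_3=V(x_3,x_4)$. I would first observe that the hyperplanes $V(x_4)$ and $V(x_3)$ each contain $\Pi_3$ together with one more $2$-plane of $X$: $V(x_4)\supset\Pi_1=\langle P_1,P_2,P_5,P_6\rangle=V(x_0,x_4)$ and $V(x_3)\supset\langle P_1,P_3,P_7,P_8\rangle=V(x_1,x_3)$ (both quadruples are coplanar by \eqref{eq-1256} and \eqref{eq-1378}, hence span $2$-planes of $X$ by Proposition~\ref{prpBasic}.\ref{p31-four}). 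Writing $\psi=(x_3:x_4)$ for the projection from $\Pi_3$, the fibres $\psi^{-1}((1:0))$ and $\psi^{-1}((0:1))$ are then two distinct fibres each containing a $2$-plane $\neq\Pi_3$, with triple points off $\Pi_3$ equal to $\{P_2,P_6\}$ and $\{P_3,P_8\}$ respectively (by Proposition~\ref{prpBasic}.\ref{p31-six} these are exactly the six triple points on $V(x_4)$, resp.\ $V(x_3)$, namely $\{P_1,P_2,P_5,P_6,P_7,P_9\}$ and $\{P_1,P_3,P_5,P_7,P_8,P_9\}$).

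Next I would apply Lemma~\ref{lem-fibers}.2: one of these two fibres contains a further $2$-plane, so by Proposition~\ref{prpBasic}.\ref{p31-eight} it consists of exactly three $2$-planes and Observation~\ref{obs-useful}(2) determines the third one. In $\psi^{-1}((1:0))$ the line $\Pi_3\cap\Pi_1=\langle P_1,P_5\rangle$ carries the pair $\{P_1,P_5\}$, so the remaining four triple points split as the pair $\{P_7,P_9\}$ on $\Pi_3$ and the pair $\{P_2,P_6\}$ on $\Pi_1$, forcing the third plane to be $\langle P_2,P_6,P_7,P_9\rangle$; symmetrically, in $\psi^{-1}((0:1))$ one has $\Pi_3\cap\langle P_1,P_3,P_7,P_8\rangle=\langle P_1,P_7\rangle$ and the third plane is forced to be $\langle P_3,P_5,P_8,P_9\rangle$. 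Thus either $P_2,P_6,P_7,P_9$ are coplanar --- and \eqref{eq-2679} holds with no coordinate change --- or $P_3,P_5,P_8,P_9$ are coplanar.

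To dispose of the second case I would apply the linear coordinate change $g\colon(x_0:x_1:x_2:x_3:x_4)\mapsto(x_1:x_0:x_2:x_4:x_3)$. It fixes $\Pi_0=V(x_0,x_1)$ and $\Pi_3=V(x_3,x_4)$, permutes $\{P_1,\dots,P_8\}$ as $(P_2\,P_3)(P_5\,P_7)(P_6\,P_8)$ (fixing $P_1$ and $P_4$), and sends $P_9=(1:\aaa:\bb:0:0)$ to $(\aaa:1:\bb:0:0)$ and $P_{10}=(1:\aaa:\bb:\ee:\ff)$ to $(\aaa:1:\bb:\ff:\ee)$; here $\aaa\neq0$, for otherwise $P_9$ would be a fifth triple point on $V(x_1,x_3)$. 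Hence, after relabelling, {\rm [\Aoldtwo], [\Aoldone]} and \eqref{eq-10points}, \eqref{eq-1579} all remain valid, while the plane $\langle P_3,P_5,P_8,P_9\rangle$ of $X$ is carried to $\langle P_2,P_6,P_7,g(P_9)\rangle$ of $g(X)$, so in the new coordinates $P_2,P_6,P_7,P_9$ are coplanar and we are back in the first case. (Consistency check: $P_3,P_5,P_8,P_9$ coplanar $\iff\bb=1$, $P_2,P_6,P_7,P_9$ coplanar $\iff\aaa=\bb$, and $g$ acts by $(\aaa,\bb)\mapsto(1/\aaa,\bb/\aaa)$, turning the latter condition into $\bb=1$.)

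The one place that calls for care is the second paragraph: checking that the lines $\Pi_3\cap\Pi_1$ and $\Pi_3\cap\langle P_1,P_3,P_7,P_8\rangle$ contain precisely the two expected triple points, so that the splitting of the six triple points given by Observation~\ref{obs-useful}(2) pins the third plane down without ambiguity; and verifying that $g$ respects each of the normalisations in \eqref{eq-10points} and \eqref{eq-1579}. Everything else is a direct consequence of Lemma~\ref{lem-fibers}, Observation~\ref{obs-useful}, and a short determinant computation.
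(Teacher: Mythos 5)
Your proposal is correct and follows essentially the same route as the paper: the same projection $(x_3:x_4)$ from $\langle P_1,P_5,P_7,P_9\rangle$, the same dichotomy (either $P_2,P_6,P_7,P_9$ or $P_3,P_5,P_8,P_9$ coplanar) obtained from Lemma~\ref{lem-fibers} and Observation~\ref{obs-useful}, and the identical coordinate swap $(x_0:x_1:x_2:x_3:x_4)\mapsto(x_1:x_0:x_2:x_4:x_3)$ to reduce the second case to the first. Your explicit verification of the induced permutation of $P_1,\dots,P_8$ and the parameter check $(\aaa,\bb)\mapsto(1/\aaa,\bb/\aaa)$ match the paper's (unstated) computation.
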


\begin{proof} 
Recall that by \eqref{eq-1579} we have  $\cc=\dd=0$ in \eqref{eq-10points}. 
Consider the projection $\psi_4(x_0\colon \dots\colon x_4) :=  (x_3\colon x_4) $ from the $2$-plane $\langle P_1,P_5,P_7,P_9 \rangle$.
Now  \eqref{eq-1579} allows us to use Lemma~\ref{lem-fibers}.3
to examine the fibers of $\psi_4$ in the same way we studied the fibers of $\psi_2$, $\psi_3$ in the proof of the previous lemma.

By \eqref{eq-1256} the fiber  $\psi_4^{-1}(\psi_4(P_{2}))$ contains the plane $\Pi_1$ and the extra triple point $P_6$. If $\psi_4^{-1}(\psi_4(P_{2}))$ contains two $2$-planes, then Lemma~\ref{lem-fibers}.3
yields  \eqref{eq-2679}.

Thus we can assume that $\psi_4^{-1}(\psi_4(P_{2}))$ contains exactly one plane. But \eqref{eq-1378} implies that   $\psi_4^{-1}(\psi_4(P_{3}))$
contains the plane $\langle P_1,P_3,P_7,P_8 \rangle$ and the triple point $P_8$. 
Lemma~\ref{lem-fibers}.2 yields that the fiber  $\psi_4^{-1}(\psi_4(P_{3}))$ contains two $2$-planes. From Lemma~\ref{lem-fibers}.3 we infer that
the points $P_3,P_5,P_8,P_9$ are coplanar (i.e.,  $\bb=1$). We obtain 
\begin{equation} \label{eq-strange-points2}
P_9=(1\colon \aaa\colon 1\colon 0\colon 0) \mbox{ and } P_{10}=(1\colon \aaa\colon 1\colon \ee\colon \ff).
\end{equation}

To complete the proof assume that \eqref{eq-strange-points2} holds and  consider the map $\varphi_2(x_0,x_1,x_2,x_3,x_4):=(x_1,x_0,x_2,x_4,x_3)$.
One can easily check that the  set $\{P_1,\dots,P_8\}$ is invariant under $\varphi_2$, whereas  
each of the quadruplets $P_1,P_5,P_7,\varphi_2(P_9)$, $P_2,P_6,P_7,\varphi_2(P_9)$  consist of coplanar points.
\end{proof}

In order to proceed further we have to exclude some degenerate configurations of the $10$ points given by  \eqref{eq-10points}. 
\begin{lemma} \label{lem-reducible}
Let  $P_1, \ldots, P_{10}$ be given by \eqref{eq-10points} with  $\aaa^5 \neq 1$ and 
\begin{equation} \label{eq-strange-points3}
P_9=(1:\aaa:\aaa:0:0), \quad P_{10}=(1:\aaa:\aaa:\aaa:1).
\end{equation}
 Then no quintic threefold with triple points in $P_1, \ldots, P_{10}$
is irreducible. 
\end{lemma}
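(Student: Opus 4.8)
The plan is to show that the $10$ points $P_1,\dots,P_{10}$ in \eqref{eq-strange-points3} impose conditions on the space of quintics that are so strong that the only quintics vanishing to order three at all of them are reducible. First I would set up the projective change of coordinates that reveals the hidden symmetry of this configuration. The points have the shape of a Segre-type configuration, so I expect there is a linear automorphism $\sigma$ of $\Ps^4$, of order dividing $5$, permuting (a relabelling of) $P_1,\dots,P_{10}$ cyclically in two orbits of size $5$, and acting on $\C[x_0,\dots,x_4]$ by scaling the coordinates by powers of a primitive fifth root of unity $\zeta$ (this is where the hypothesis $\aaa^5\neq 1$ will enter: it guarantees $P_9,P_{10}$ are genuine "new" points and $\sigma$ has the expected order rather than degenerating). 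I would first produce $\sigma$ explicitly, check it fixes the configuration, and note that it therefore preserves the linear system $\cL$ of quintics triple at $P_1,\dots,P_{10}$.

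Next I would decompose $\cL$ into eigenspaces for $\sigma$. Since $\sigma$ acts diagonally on monomials, each eigenspace is spanned by monomials of a fixed total $\zeta$-weight. The key numerical input is the dimension count: the ideal $\cap_{i=1}^{10} I(P_i)^3$ imposes $11t=110$ conditions (as in the proof of Proposition~\ref{prpDefectFormula}, each ordinary-triple-point condition in degree $5=2\cdot 3-1$ costs $11$), and $\dim \C[x_0,\dots,x_4]_5=\binom{9}{4}=126$, so $\dim\cL\geq 126-110=16$. In fact one should be able to pin $\cL$ down much more precisely by exhibiting reducible members: products $x_i\cdot(\text{quartic through the relevant }9\text{ or fewer double points})$, products of five planes through pairs of the $P_i$ (lines $\langle P_j,P_k\rangle\subset X$ always, by Proposition~\ref{prpBasic}.\ref{p31-one}), and cones over cubic surfaces. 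I would assemble enough such obviously reducible quintics to span each $\sigma$-eigenspace of $\cL$, the eigenspaces being small (dimension $3$ or $4$ each by the weight count). Showing $\cL$ is spanned by reducible elements does not yet finish the job — a sum of reducible quintics need not be reducible — so the real content is the next step.

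The decisive step, and the one I expect to be the main obstacle, is to show that a \emph{general} member $G$ of $\cL$ — equivalently, by genericity, every member, once we know the statement is closed — must be reducible. Here I would argue on the base locus and the fixed components: because $\cL$ is small-dimensional and the $P_i$ are forced to be triple points, the linear system $|\cL|$ either has a fixed component (a plane or quadric, forced by Proposition~\ref{prpBasic}.\ref{p31-seven}, \ref{p31-four}, so $X$ reducible immediately) or is composed with a pencil / maps to a low-dimensional variety, again forcing reducibility of the members. Concretely, I would look at the restriction of $\cL$ to one of the coordinate planes or to the hyperplane $x_0=0$: by the configuration, many of the $P_i$ lie there and the induced system of quartic/cubic curves triple at those points is empty or consists of non-reduced curves, which by an argument as in the proof of Proposition~\ref{prpBasic}.\ref{p31-six} (an irreducible quintic threefold with only isolated singularities has reduced, hence also "not too degenerate", hyperplane sections) is incompatible with $V(G)$ being an irreducible quintic with isolated triple points. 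Assembling these restrictions for enough coordinate hyperplanes should force every factorisation-free candidate out, leaving only reducible $G$. The bookkeeping of which quadruplets of the ten points are coplanar (governed by \eqref{eq-strange-points3}) is exactly what makes these restricted systems degenerate, so the lemma's hypotheses are used precisely there.
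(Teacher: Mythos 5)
There is a genuine gap, and it begins with the dimension count that your whole strategy rests on. A triple point at $P$ means $F\in I(P)^3$, i.e.\ $F$ and all of its partial derivatives of order $\le 2$ vanish at $P$; that is $1+4+10=15$ linear conditions on a quintic, not $11$. The number $11t$ you quote from Proposition~\ref{prpDefectFormula} is the colength of the saturated ideal generated by $\mathfrak{m}_P^3$ \emph{together with the Jacobian ideal of $F$} — a strictly larger ideal, relevant only to the defect computation — and it does not count conditions for a quintic to acquire a triple point. So ten triple points impose up to $150>126$ conditions, and the correct statement (which is exactly what the paper's proof establishes, by exhibiting a basis of the quintics in $\cap_{i\le 8}I(P_i)^3$ with \cite{Sing} and then imposing the conditions at $P_9,P_{10}$) is that for $\aaa^5\neq 1$ the space of quintics triple at all ten points is \emph{one}-dimensional, spanned by the explicit product of five hyperplanes $(x_2-x_3)x_3(x_1-x_2+x_4)(x_0-x_4)(x_0-\aaa x_1)$. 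Once that is known the lemma is immediate; under your (incorrect) lower bound $\dim\cL\ge 16$, the task of proving that every member of $\cL$ is reducible would be essentially hopeless, and indeed you never carry out your ``decisive step'' — it remains a list of hoped-for mechanisms (fixed components, composition with a pencil, degenerate restrictions) with no argument that any of them actually occurs.

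The symmetry you propose is also pointing the wrong way. A diagonal scaling fixes the coordinate points $P_1,P_2,P_3,P_5,P_7$ individually, so it cannot cycle them; and more importantly, the special, symmetric situation is $\aaa^5=1$ — this is precisely the degenerate case the lemma's hypothesis \emph{excludes} and which the paper must handle separately in Lemma~\ref{lem-final} by a different argument. Reading $\aaa^5\neq 1$ as the condition that ``guarantees $\sigma$ has the expected order'' inverts its actual role: it is the genericity condition under which the linear system collapses to a single (reducible) quintic.
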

\begin{proof}
Obviously, the quintic
\[  (x_2-x_3)x_3(x_1-x_2+x_4)(x_0-x_4)(x_0-\aaa x_1)\]
has multiplicity three in the ten points $P_{j}$, where $j \leq 10$. Computation with \cite{Sing} gives the basis of the vector space of quintics in 
$\cap_{i=1}^8 I(P_i)^3$.  Then one checks directly that the extra condition of vanishing of all  second derivatives in the points $P_9$, $P_{10}$ given by \eqref{eq-strange-points3} defines a unique degree $5$ threefold 
as soon as $\aaa^5 \neq 1$. 
\end{proof}

\begin{lemma} \label{lem-final}
Suppose  that {\rm  [\Aoldtwo], [\Aoldone]} are satisfied. Then  we can assume that \eqref{eq-10points} holds with
\begin{equation*} \label{eq-strange-points4}
P_9=(1:1:1:0:0), \quad P_{10}=(1:1:1:\ee:\ff).
\end{equation*}
\end{lemma}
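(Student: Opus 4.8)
The plan is to run the projection-from-a-plane technique of Sections~\ref{secPlane} and~\ref{secSegre} once more, reduce to the configuration of Lemma~\ref{lem-reducible}, and then use that $X$ is irreducible.

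By Lemma~\ref{lemma-2679} we may assume that \eqref{eq-10points}, \eqref{eq-1579} and \eqref{eq-2679} all hold, so (recalling $\cc=\dd=0$ and $\bb=\aaa$)
\[
P_9=(1\colon\aaa\colon\aaa\colon0\colon0),\qquad P_{10}=(1\colon\aaa\colon\aaa\colon\ee\colon\ff),
\]
and, besides $\Pi_0,\Pi_1,\Pi_2$ and the planes from \eqref{eq-1378} and \eqref{eq-23910}, the planes $\langle P_1,P_5,P_7,P_9\rangle$ and $\langle P_2,P_6,P_7,P_9\rangle$ lie on $X$. I would then apply Proposition~\ref{corPlanes} and Lemma~\ref{lem-fibers} to the projection $\pi$ from $\Pi_0$. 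Its three special fibres are those through $\{P_5,P_6\}$, $\{P_7,P_8\}$ and $\{P_9,P_{10}\}$; the first two already contain two $2$-planes each, and Proposition~\ref{corPlanes} forces the third to be the plane $\langle P_2,P_3,P_9,P_{10}\rangle$ together with either an irreducible cubic with isolated singularities or a second $2$-plane and a quadric. In the second case the complementary pair gives that $P_1,P_4,P_9,P_{10}$ are coplanar, i.e.\ $\ee=\ff$. Combining this with the analogous analysis of the projection from $\langle P_2,P_3,P_9,P_{10}\rangle$ (using Observation~\ref{obs-useful} and that no two of the $P_i$ are collinear with a third), the fibre count yields linear constraints on $\ee,\ff$ in terms of $\aaa$; splitting into the finitely many resulting subcases — in several of which one reads off $\aaa=1$ immediately — and applying, where needed, a linear automorphism of $\Ps^4$ permuting $\{P_1,\dots,P_8\}$, in the spirit of the maps $\varphi$, $\varphi_2$ of Lemmas~\ref{lemma-1579} and~\ref{lemma-2679}, I aim to normalise to $P_{10}=(1\colon\aaa\colon\aaa\colon\aaa\colon1)$, which is exactly the configuration of Lemma~\ref{lem-reducible}.

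Now I would invoke irreducibility. A quintic threefold with only finitely many singular points is reduced and irreducible, since a nontrivial factorisation $\FX=F_1F_2$ would force $X$ to be singular along the surface $V(F_1,F_2)$, contradicting the finiteness of $\sing(X)$. Hence Lemma~\ref{lem-reducible} applies and gives $\aaa^5=1$. Finally, for $\aaa$ a nontrivial fifth root of unity one exhibits an explicit linear change of coordinates that permutes $\{P_1,\dots,P_8\}$ and sends $P_9$ to $(1\colon1\colon1\colon0\colon0)$, so that after relabelling \eqref{eq-10points} holds with $P_9=(1\colon1\colon1\colon0\colon0)$ and $P_{10}=(1\colon1\colon1\colon\ee\colon\ff)$; such a map exists because of the large symmetry group of the ten-point configuration.

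I expect the main difficulty to be the middle step: the careful bookkeeping needed to guarantee that \emph{every} configuration surviving the fibre count is either $\aaa=1$ or the one treated in Lemma~\ref{lem-reducible}, together with the explicit coordinate normalisations used along the way. By contrast the passage from $\aaa^5=1$ to $\aaa=1$ is routine once the right automorphism is produced; alternatively, one could instead run the projection argument once more to show that $\aaa\ne1$ with $\aaa^5=1$ is itself impossible.
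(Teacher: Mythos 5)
Your overall strategy---normalize via Lemmata~\ref{lem-coord}--\ref{lemma-2679}, reduce by further plane projections to the dichotomy ``$\aaa=1$ or $P_{10}=(1\colon\aaa\colon\aaa\colon\aaa\colon1)$'', invoke Lemma~\ref{lem-reducible} together with the irreducibility of $X$ (which you justify correctly) to get $\aaa^5=1$---is the same as the paper's up to that point. One caveat on the middle step: the paper obtains the two dichotomies ``$\aaa=1$ or $\ee=\aaa$'' and ``$\aaa=1$ or $\ff=1$'' by projecting from $\langle P_2,P_6,P_7,P_9\rangle$ and from $\langle P_1,P_3,P_7,P_8\rangle$, whereas your projection from $\Pi_0$ has already been fully exploited in Lemma~\ref{lem-coord}, and in your ``plane plus irreducible cubic'' branch the fibre through $P_9,P_{10}$ yields no new coplanarity at all; so the promised ``linear constraints on $\ee,\ff$'' are only a plan, not yet an argument. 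Still, that part is repairable along the paper's lines.

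The genuine gap is your final step. You assert that for $\aaa$ a nontrivial fifth root of unity there is a linear automorphism of $\Ps^4$ permuting $\{P_1,\dots,P_8\}$ and sending $P_9$ to $(1\colon1\colon1\colon0\colon0)$, ``because of the large symmetry group of the ten-point configuration''. No such normalization is available, and the evidence is already in the configuration: the quadruple $P_4,P_6,P_8,P_9$ is coplanar precisely when $\aaa=1$ (this is exactly the branch of the first dichotomy that yields $\aaa=1$), so the labelled coplanarity structure of the ten points genuinely changes with $\aaa$, and an automorphism permuting $\{P_1,\dots,P_8\}$ and moving $P_9$ to the $\aaa=1$ position would have to match these structures up --- a strong combinatorial constraint you have not verified and which the paper's authors evidently could not satisfy. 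What the paper actually proves is that the case $\aaa^5=1$, $\aaa\neq1$ is \emph{impossible}, not that it can be transformed away: it introduces the eleventh triple point $P_{11}=(1\colon c_1\colon c_2\colon c_3\colon c_4)$, applies Lemma~\ref{lem-fibers}.1 to the projections from five of the $2$-planes already constructed to derive five quadratic equations in $c_1,\dots,c_4$, and checks with \cite{Sing} that this system is inconsistent when $\aaa\neq 1$. Your fallback remark (``run the projection argument once more to show $\aaa\neq1$ is impossible'') points in the right direction, but the exclusion cannot be carried out among the ten points alone --- it requires the eleventh point and a nontrivial elimination --- so as written the proof is missing its decisive step.
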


\begin{proof} By Lemmata~\ref{lem-coord},~\ref{lemma-1579},~\ref{lemma-2679} we can assume that \eqref{eq-10points} holds with 
$$
P_9=(1:\aaa:\aaa:0:0), \quad P_{10}=(1:\aaa:\aaa:\ee:\ff).
$$
We claim that  $\aaa^5=1$.
Indeed,  we consider the projection $\psi_5$ from the $2$-plane $\langle P_2,P_6, P_7,P_9 \rangle \subset X$. By \eqref{eq-23910} (resp. \eqref{eq-1378})
its fiber over $\psi_5(P_{3})$ (resp. $\psi_5(P_{4})$) contains the $2$-plane $\langle P_2,P_3,P_9,P_{10} \rangle$ (resp. $\langle P_2,P_4,P_7,P_8 \rangle$).
By Lemma~\ref{lem-fibers}.3 and Lemma~\ref{lem-fibers}.2
either  $P_4,P_6,P_8,P_9$ or $P_3,P_6,P_7,P_{10}$ are  coplanar. The former yields $\aaa=1$, in which case we are done. Thus we can assume that the latter quadruplet consists of coplanar points and
the equality  $\aaa=\ee$ holds.

We consider the projection from the plane $\langle P_1,P_3,P_7,P_8 \rangle$. Its fibers contain the planes $\langle P_1,P_5,P_7,P_9 \rangle$, $\langle P_3,P_6,P_7,P_{10} \rangle$, so 
either $\langle P_3,P_5,P_8,P_9 \rangle$ or $\langle P_1,P_6,P_8,P_{10} \rangle$ is two-dimensional. In the first case we have $\aaa=1$ and in the second case we obtain $\ff=1$. Thus 
either $\aaa=1$ or \eqref{eq-strange-points3} holds. Lemma~\ref{lem-reducible} yields the equality $\aaa^5 =1$. 

Assume that $\aaa^5=1$, $\aaa\neq 1$ and  put $P_{11}:=(1:c_1:c_2:c_3:c_4)$.  We  consider the projection $\psi_4$ from the $2$-plane $\langle P_1,P_5,P_7,P_9 \rangle$ again. 
By direct check (see the proof of Lemma~\ref{lemma-2679}) we have   $\psi_4(P_2) = \psi_4(P_6)$,  $\psi_4(P_3) = \psi_4(P_8)$. 
Moreover,  Lemma~\ref{lem-fibers}.1 implies that the projection $\psi_4$, when restricted to the set of seven triple points away from the center $\langle P_1,P_5,P_7,P_9 \rangle$,
has exactly four fibers: three of them consist of two points and one of exactly one point. Therefore, from $\aaa\neq 1$ and
$$
\psi_4(\{P_4, P_{10},P_{11}\}) = \{ (1:1), (\aaa:1), (c_3:c_4) \} \, .
$$
we infer that either $(1:1)=(c_3:c_4)$ or $(\aaa:1)=(c_3:c_4)$.  Thus the following equality holds
\[ 
(c_3-c_4)(\aaa c_4 - c_3) = 0  \, .
\]

Repeating the above argument for the following $2$-planes $\langle P_1,P_6,P_8,P_{10}  \rangle$, $\langle P_2,P_3,P_9,P_{10} \rangle$, $\langle P_2,P_4,P_7,P_8 \rangle $ and $\langle P_3,P_4,P_5,P_6 \rangle$ we find the 
additional  quadrics:
\begin{eqnarray*}
(1-c_4-c_1+c_3)(1-c_4-\aaa c_1+\aaa c_3)&=& 0\\
(c_2-\aaa)((1-\aaa)(c_1-\aaa)+\aaa(c_2-\aaa))&=& 0\\
(c_2-c_4)((\aaa-1)c_1-\aaa(c_2-c_4))&=& 0\\
(1+c_3-c_2)(\aaa+c_3-c_2)&=& 0
\end{eqnarray*}
One can check with \cite{Sing} that the above system of five equations has no solutions whenever $\aaa^4-\aaa^3+\aaa^2-\aaa+1=0$.
This completes the proof.
\end{proof}

After those preparations we are in position to give the proof of Proposition~\ref{prop-segreconf}.

\begin{proof}[{Proof of Proposition~\ref{prop-segreconf}}]
By Lemma~\ref{lem-final} we can assume that \eqref{eq-10points}  holds with  $P_9=(1:1:1:0:0)$ and  $P_{10}=(1:1:1:\ee:\ff)$. 

We claim that $\ee=\ff=1$. Indeed, 
recall that  $\langle P_2,P_3,P_9,P_{10} \rangle$ is a $2$-plane on $X$ (see \eqref{eq-23910}) and put $\psi_6$ to denote the projection from that plane.
Since the $2$-planes $\langle P_1,P_2,P_3,P_4 \rangle$, $\langle P_2,P_6,P_7,P_9 \rangle$ and  $\langle P_3,P_5,P_8,P_9 \rangle$ are contained in fibers of  $\psi_6$,
two of the quadruplets $\{ P_1,P_4,P_9,P_{10} \}$, $\{P_3,P_6$,$P_7,P_{10}\}$ and $\{P_2,P_5$,$P_8,P_{10}\}$ are coplanar (this is  Lemma~\ref{lem-fibers} again). 
Thus two of the three equalities $\ee=\ff,\ee=1,\ff=1$ hold and we obtain 
\begin{equation} \label{eq-strange-segre}
P_9=(1:1:1:0:0) \quad P_{10}=(1:1:1:1:1)  \, .
\end{equation}

Finally, one can easily check that the cubic
\begin{equation} \label{eq-segrecubic}
\segcub := x_0 x_1 x_3 - x_0 x_1 x_4 - x_0 x_2 x_3 + x_0 x_3 x_4 + x_1 x_2 x_4 - x_1 x_3 x_4
\end{equation}
has double points at $P_1$, $\ldots$, $P_{10}$ given by \eqref{eq-10points}, \eqref{eq-strange-segre} and no further singularities.
Thus the points in question form the Segre configuration by \cite{scub} (see also \cite[Section 3.2]{hunt}). 
\end{proof}

\section{Proof of Theorem~\ref{thm-no-eleven}}\label{sec11}

Here we show that if a hyperplane section of a  quintic threefold $X$ is reducible then $X$ can have at most $10$ ordinary triple points (Theorem~\ref{thm-no-eleven}). Examples in Section~\ref{secExa}
show that  this bound is sharp. 

The proof of Theorem~\ref{thm-no-eleven} will be based on the following lemma.
\begin{lemma} \label{lem-no11}
Let $Y \subset \Ps^4(\C)$ be an irreducible  quintic threefold. If $\sing(Y)$ contains 11 triple points, 10 of which form the Segre configuration then $Y$ has a singularity that is not an    
ordinary triple point.
\end{lemma}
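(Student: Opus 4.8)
The plan is to combine the explicit description of the Segre configuration with the very special geometry of the Segre cubic, and then to locate the eleventh point by a short case analysis. By Proposition~\ref{prop-segreconf} (more precisely, by its proof) we may choose coordinates so that $P_1,\dots,P_{10}$ are given by \eqref{eq-10points}, \eqref{eq-strange-segre}, and so that $\segcub$ of \eqref{eq-segrecubic} defines the Segre cubic threefold $\mathcal{C}:=V(\segcub)$, whose singular locus is exactly the ten nodes $P_1,\dots,P_{10}$; recall that $\mathcal{C}$ is irreducible and (see \cite{scub}, \cite[Section~3.2]{hunt}) contains exactly fifteen $2$-planes $\Pi_1,\dots,\Pi_{15}$, each carrying precisely four of the ten points. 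First I reduce to the case that no three of $P_1,\dots,P_{11}$ are collinear: if three of them lay on a line $\ell$, then arguing as in the proof of Proposition~\ref{prpBasic}.\ref{p31-two} (which uses only that $Y$ has multiplicity $\geq 3$ at each, and extends verbatim to a third multiple point on $\ell$) one gets $\ell\subset\sing(Y)$, so $Y$ has a non-isolated — in particular non-ordinary — singularity and we are done.

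Second, $Y$ contains each $\Pi_j$. Indeed, if some $\Pi_j\not\subset Y$, then $Y\cap\Pi_j$ would be a plane quintic curve with points of multiplicity $\geq 3$ at the four $P_i$ lying on $\Pi_j$; each of the six lines joining two of those four points would meet this curve with multiplicity $\geq 6>5$, hence be a component, and the six lines are distinct because no three of the $P_i$ are collinear — impossible for a curve of degree $5$. Hence $\bigcup_j\Pi_j\subseteq Y\cap\mathcal{C}$. Since $Y$ is irreducible of degree $5$ and $\mathcal{C}$ is irreducible of degree $3$, the surface $Y\cap\mathcal{C}$ has degree $15$; as it already contains the reduced degree-$15$ surface $\Pi_1\cup\dots\cup\Pi_{15}$, we conclude $Y\cap\mathcal{C}=\Pi_1\cup\dots\cup\Pi_{15}$.

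Third, suppose $P_{11}\in\mathcal{C}$. Then $P_{11}\in Y\cap\mathcal{C}=\bigcup_j\Pi_j$, so $P_{11}$ lies on one of the planes, say $\Pi_1\subset Y$. Thus $\Pi_1$ carries the five distinct points consisting of $P_{11}$ and the four $P_i$ on $\Pi_1$, all of multiplicity $\geq 3$ on $Y$. Writing $F=\ell_0 g_0+\ell_1 g_1$ with $\ell_0,\ell_1$ linear forms vanishing on $\Pi_1$ (possible since $\Pi_1\subset Y$), the argument in the proof of Proposition~\ref{prpBasic}.\ref{p31-four} shows that the two plane quartic curves cut on $\Pi_1$ by $g_0$ and $g_1$ have a double point at each of these five points, so the length of their intersection is at least $4\cdot 5=20>16$. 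By Bézout they therefore share a curve $D$, and on $D$ we have $\ell_0=\ell_1=g_0=g_1=0$, whence every partial derivative of $F$ vanishes along $D$; thus $Y$ is singular along $D$ — again not an ordinary triple point.

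Finally, suppose $P_{11}\notin\mathcal{C}$. As in Proposition~\ref{prpBasic}.\ref{p31-one}, $Y$ also contains the ten lines $\ell_i:=\langle P_{11},P_i\rangle$, which are pairwise distinct by the collinearity reduction and are not contained in $\mathcal{C}$. These lines, together with the fifteen planes $\Pi_j\subset Y$ and the relation $Y\cap\mathcal{C}=\bigcup_j\Pi_j$, leave very little freedom: using the group $S_6$ of projective automorphisms preserving the Segre configuration one reduces the position of $P_{11}$ to finitely many strata (according to which spans of subsets of $\{P_1,\dots,P_{10}\}$ contain it, equivalently to the combinatorial incidence type of the $\ell_i$ with the $\Pi_j$), and on each stratum one checks — by analysing the linear system of quintics with multiplicity $\geq 3$ at $P_1,\dots,P_{10}$ (all of which restrict on $\mathcal{C}$ to the fixed divisor $\bigcup_j\Pi_j$), imposing in addition the triple point at $P_{11}$, either near the nodes or by an explicit computation with \cite{Sing} as in Lemma~\ref{lem-reducible} — that this system contains only reducible quintics, contradicting the irreducibility of $Y$. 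Hence the case $P_{11}\notin\mathcal{C}$ does not occur. The main obstacle is precisely this last step: once the eleventh point leaves $\mathcal{C}$ the geometric constraints no longer suffice by themselves, and one must carry out the (finite but genuine) stratified case analysis, or the corresponding computer-algebra verification.
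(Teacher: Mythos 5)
Your preliminary reductions are sound: the collinearity reduction, the fact that each of the fifteen planes of the Segre cubic $\mathcal{C}=V(\segcub)$ lies on $Y$ (this is essentially Proposition~\ref{prpBasic}.\ref{p31-four}), the identification $Y\cap\mathcal{C}=\Pi_1\cup\dots\cup\Pi_{15}$, and the disposal of the case $P_{11}\in\mathcal{C}$ are all correct. But the case $P_{11}\notin\mathcal{C}$ --- which you yourself flag as the main obstacle and leave as an unexecuted ``stratified case analysis'' --- is not a sketch of a proof; it is the entire content of the lemma. Indeed, this is the only case that genuinely needs an argument: if all singularities of $Y$ were ordinary triple points, then $P_{11}$ could not lie on any hyperplane containing six of $P_1,\dots,P_{10}$ (Proposition~\ref{prpBasic}.\ref{p31-six}), hence not on any of the fifteen planes, and your own identity $Y\cap\mathcal{C}=\bigcup_j\Pi_j$ then forces $P_{11}\notin\mathcal{C}$. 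Your proposal therefore terminates exactly where the real work begins, and the appeal to ``finitely many strata'' plus an unspecified computer check does not constitute a proof.

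The paper closes this case with a concrete algebraic mechanism that your outline does not contain. A dimension count shows that the quintics with triple points at the ten Segre points form a six-dimensional space, of which a five-dimensional subspace is $\{G_2\segcub : G_2 \text{ a quadric through the ten points}\}$; hence the equation of $Y$ can be written as $F=G_2\segcub-G$, where $G$ is a fixed product of five hyperplanes, each containing six of the ten points. Restricting this identity to each line $\ell_i=\langle P_i,P_{11}\rangle\subset Y$ gives $(G_2\segcub)|_{\ell_i}=G|_{\ell_i}$; comparing root multiplicities at $P_i$ forces the unique nonzero root of $\segcub|_{\ell_i}$ to be a root of $G|_{\ell_i}$, and an explicit evaluation (carried out for $i=1$) shows this places $P_{11}$ on one of the five hyperplanes of $V(G)$ --- a hyperplane already containing six triple points, contradicting Proposition~\ref{prpBasic}.\ref{p31-six}. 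To salvage your approach you would need to supply an argument of comparable precision for your final step; as written, the proposal has a genuine gap.
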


\begin{proof} Let $P_1$, $\ldots$, $P_{11}$ be isolated triple points  of the irreducible quintic $Y$. We assume \eqref{eq-10points}, \eqref{eq-strange-segre} and put 
\begin{equation} \label{eq-ideal-ten}
\mathfrak{I}_j := \cap_{i\leq 10} I(P_i)^j \mbox{ for } j=1,2,3.
\end{equation} 
 Moreover we consider the union of  five hyperplanes each of which contains exactly 6 points $P_j$, $j \leq 10$, given by  the degree $5$ polynomial  $G \in \mathfrak{I}_3$ :
$$
G:=(x_1-x_0)(x_2-x_4)(x_0-x_2+x_3)(x_3-x_1)x_4.
$$

Let $\segcub \in \mathfrak{I}_2$ be given by \eqref{eq-segrecubic}.  A  calculation with \cite{Sing} shows that the vector space of quintics in  $\mathfrak{I}_3$ (resp. quadrics in  $\mathfrak{I}_1$) is six-dimensional (resp. five-dimensional). Hence every degree $5$
irreducible polynomial $F \in \mathfrak{I}_3$ can be written as
$$
F = G_2 \segcub - G
$$
where $G_2 \in \mathfrak{I}_1$ is a quadric. 

Suppose now that $F$ is a generator of the ideal  of $Y$ and put $\ell_i := \langle P_i, P_{11} \rangle$ to denote  the line connecting $P_i$ and $P_{11}$ for $i \leq 10$. By Proposition~\ref{prpBasic}.\ref{p31-six} we have $G(P_{11})\neq 0$, so $\segcub(P_{11}) \neq 0$. Moreover,
since $\ell_i  \subset Y$, we have  $$ (G_2 \segcub)|_{\ell_i}=G|_{\ell_i}.$$ 
Parametrize $\ell_i$ in such a way that the parameter $t=0$ (resp. $t=\infty$) corresponds to the point $P_i$ (resp. $P_{11}$).
Then the restriction $G|_{\ell_i}$ (resp. $\segcub|_{\ell_i}$)
is a degree $5$ (resp.  degree $3$)  polynomial with a triple root (resp. double root)  at $t=0$. Hence the non-zero root of $\segcub|_{\ell_i}$ is also a root of the restriction $G|_{\ell_i}$.

Put $P_{11}:=(c_0:c_1:c_2:c_3:c_4)$. By direct computation the non-zero  root of  $\segcub|_{\ell_1}$ 
is $(c_0 c_3 - c_1 c_4)/\segcub(P_{11})$. Substitution of that root into $(G|_{\ell_1})/t^3$ yields the product
$$
c_0 c_1 c_3 c_4 (c_0-c_1) (c_1 - c_3)(c_3 - c_4)(c_4-c_0)(c_0-c_1+c_3-c_4) 
$$
up to the square of $\segcub(P_{11})$ in the denominator.
This implies that $P_{11}$ lies on a hyperplane which contains already six of the triple points $P_1$ $\ldots$, $P_{10}$. 
Proposition~\ref{prpBasic}.\ref{p31-six} completes the proof.
\end{proof}

\begin{proof}[{Proof of Theorem~\ref{thm-no-eleven}}]
The number of points in $\sing(X)$ cannot exceed $11$ by Lemma~\ref{lemSpec}.
Moreover, by  Proposition~\ref{prpBasic}.\ref{p31-eight}, the quintic $X$ contains a $2$-plane.
If $\sing(X)$ consists of $11$ points,  Proposition~\ref{prop-segreconf} and Lemma~\ref{lem-no11} yield a contradiction.
\end{proof}

\begin{remark} \label{remark-strongerclaim} 
We conjecture that  as soon as $X$ contains a complete intersection surface of bidegree $(a,b)$ with $a,b <5$, then it also contains a $2$-plane, so the bound of Theorem~\ref{thm-no-eleven}
holds even under weaker assumptions.
Indeed, if $X$ contains a complete intersection surface, then it contains a complete intersection surfaces of bidegree $(1,1)$, $(1,2)$ or $(2,2)$. In Proposition~\ref{prpBasic} we show that if it contains a surface of bidegree $(1,2)$ then it contains one of bidegree $(1,1)$. Suppose now that $X=V(F)$ contains a complete intersection surface of bidegree $(2,2)$. Then we can write $F$ as
$g_1h_1+g_2h_2$
with $\deg(g_i)=2$ and $\deg(h_i)=3$. Thus, to show the existence of a $2$-plane on $X$ 
one needs a detailed knowledge of 
the geometry of certain cubic threefolds. We do not follow this path to maintain our exposition compact.    
\end{remark}

\section{Examples}\label{secExa}

Below we discuss various examples of quintic threefolds with ordinary triple points. It should be pointed out that the quintic $V(F_{vS})$ from 
Example~\ref{example-duco} appears
 in \cite[page~862]{DucoQuintic} as a quintic with ten Del Pezzo nodes, but without calculation of the defect $\delta$. 
For ten points $P_1,\dots P_{10}$ we define  
$\mathfrak{I}_j$ by  \eqref{eq-ideal-ten}. The vector space of quintics in the  ideal $\mathfrak{I}_3$ is denoted by $\mathfrak{I}_{(5)}$.

\begin{example} \label{example-duco}
Let $P_1,\dots P_{10}$ form the Segre configuration given by  \eqref{eq-10points}, \eqref{eq-strange-segre}. 
It is well-known (\cite[Section 3.2]{hunt}) that this configuration is invariant under an action of the symmetric group  $S_6$ and $(\mathfrak{I}_{(5)})^{S_6}$ is one-dimensional (up to a change of coordinates, the set of zeroes of the basis vector  $F_{vS}$ is the irreducible quintic threefold ${\mathcal M}_{(-3:1)}$ from \cite[page~862]{DucoQuintic}). As we checked in the proof of Lemma~\ref{lem-no11}, the vector space $\mathfrak{I}_{(5)}$ is six-dimensional
and every irreducible $\FX \in \mathfrak{I}_{(5)}$  can be written as 
$$
\FX = \lambda F_{vS} + G_2 \segcub ,
$$ 
where $\lambda \neq 0$ is a constant,  $G_2 \in \mathfrak{I}_1$ is a quadric and $\segcub \in \mathfrak{I}_2$ is the Segre cubic (see \eqref{eq-segrecubic}). 

There are 15 quadruplets of coplanar points in $\{P_1,\dots,P_{10}\}$ and each such subset yields a plane $\Pi_j \subset V(\FX)$ by Proposition~\ref{prpBasic}. 
In this way we find 15 planes $\Pi_1$, $\ldots$, $\Pi_{15}$ in $X:=V(\FX)$. Using Proposition~\ref{prpDefectFormula} one shows that $\delta=14$.

These planes generate a subgroup of finite index of $\CH^1(X)/\Pic(X)$: The fifteen planes satisfy $\sum_{j=1}^{15} \Pi_j \in |{\mathcal O}_{X}(3)|$. 
To show that this is the only relation, we use a general hyperplane section $X_H$ of the $X$. Then $X_H$ is smooth. The fifteen planes yield fifteen lines. The intersection numbers of these lines can be easily determined. One checks that the Gram matrix of fourteen of these lines and the hyperplane class has nonzero determinant.
Hence these 14 lines are independent in $\CH^{1}(X_H)/\Z{\mathcal{O}_{X_H}(1)}$. Since the restriction map
\[ \CH^1(X)/\Z \mathcal{O}_X(1) \to \CH^{1}(X_H)/\Z{\mathcal{O}_{X_H}(1)}\]
is a group homomorphism, we find that the 14 planes are independent in $\CH^1(X)/\Z \mathcal{O}_X(1)=\CH^1(X)/\Pic(X)$.
\end{example}

\begin{example} \label{example-sevtwo}
We let $P_1,\dots,P_8$  be given by \eqref{eq-10points}, and put 
$$
P_9:=(1:a:a:0:0) \mbox{ and } P_{10}=(1:a:a:a:a) \mbox{ with } a\neq 0,1.
$$
Then,  $\{P_1,\dots,P_{10}\}$ contains exactly 11 quadruplets of coplanar points. 

By a direct computation with  \cite{Sing}  we have \[\dim(\mathfrak{I}_{(5)}) =2  \mbox{ for general } a \in \C\]
and a general member of $\mathfrak{I}_{(5)}$ has ten ordinary triple points.

The 11 quadruplets of coplanar points yield 11 $2$-planes on $X$.
One can easily calculate the intersection matrix of the 11 induced lines on a general hyperplane section $X_H$ of $X$  to we find that the 11 lines are independent in $\CH^{1}(X_H)/\Z{\mathcal{O}_{X_H}(1)}$.  As in the previous example we infer that the 11 planes are independent in $\CH^1(X)/\Z \mathcal{O}_X(1)$ and therefore $\delta\geq 11$.

Finally, we calculate the ideal \eqref{eq-def-ideal} and obtain $h_J(5)=99$, which shows that 
$\delta=11$ in this case and that the 11  planes generat a finite index subgroup of $\CH^1(X)/\Pic(X)$.
\end{example}

\begin{example} \label{example-sevthree}
Let $P_1,\dots,P_8$ be given by \eqref{eq-10points}, and put 
$$
P_9:=(1:1:a:0:0) \mbox{ and } P_{10}:=(1:1:a:a:a) \mbox{ with } a\neq 0,1.
$$
Now $\{P_1,\dots,P_{10}\}$ contains exactly  9 quadruplets of coplanar points. By hyperplane section argument, the resulting $2$-planes are independent in the group  $\CH^1(X)/\Z {\mathcal O}_{X}(1)$ and $\delta\geq 9$
for any quintic threefold in $\mathfrak{I}_{(5)}$ that satisfies [\Ajeden]. As in the previous example one can check that
$$
\dim(\mathfrak{I}_{(5)}) =1  \mbox{ for general } a \in \C.
$$  

A direct computation with  \cite{Sing} shows that for $a=7$  the unique generator of $\mathfrak{I}_{(5)}$ does define a quintic with 10 ordinary triple points. In this case $\delta=10$, which is consistent with the fact that 
$X$ seems to move in a one-parameter family given by the choice of a general  $a \in \C$, so $h^{2,1}(X) \geq 1$ and $\delta \geq 10$ (recall Proposition~\ref{prpDefectFormula}).  

It seems that in this case the positivity  of defect can be explained by the existence of the $2$-planes, but the group $\mathrm{CH}^1(X)/\mathrm{Pic}(X)$ cannot be generated only by $2$-planes.
\end{example}
Below we sketch  constructions of  a quintic with $9$ ordinary triple points.
\begin{example}
(1) Recall that, by \cite{SemRot} (see also \cite[$\S.2$]{SBarron}) 
every cubic threefold with $9$ nodes
up to automorphism of $\Ps^4$  is of the form
\[ x_0x_1x_2-x_3x_4(a_0x_0+a_1x_1+a_2x_2+a_3x_3+a_4x_4) \mbox{ where } a_1, \ldots, a_4 \in \C
\]
and it contains nine $2$-planes (see e.g. \cite[p.~197]{Finkel}). Moreover, each $2$-plane on such a cubic contains four nodes of $X$. 

Let $P_1,\dots,P_9$ be the nine double points of a 9-nodal cubic given by the parameters $a_1$, $\ldots$, $a_4$ and let $\mathfrak{I}_j := \cap_{i\leq 9} I(P_i)^j$. 
We claim that  $\mathfrak{I}_2$ contains at least two independent forms of degree 3. 
Indeed, one can easily check that both 
 $x_0x_1x_2$ and $x_3x_4(a_0x_0+a_1x_1+a_2x_2+a_3x_3+a_4x_4)$ belong to $\mathfrak{I}_2$.
We choose two linearly independent  cubics $\Cppp, \Cbbb \in \mathfrak{I}_2$, two  linearly independent degree-2 forms  $Q', Q'' \in \mathfrak{I}_2$
(observe that the latter exist because $\dim(H^0(\mathcal{O}_{\Ps^4}(2))=15$) and consider the quintic $X$ given by 
\begin{equation} \label{eq-extwotwo}
\Cppp Q' +  \Cbbb  Q''.
\end{equation}
By construction $X $ has nine triple points $P_1, \dots,$ $P_9$ and contains nine $2$-planes (by Proposition~\ref{prpBasic}.\ref{p31-four}). 
Direct computation  with \cite{Sing} shows that there is a choice of all parameters in the above construction such that 
$X$ has exactly nine singularities that are ordinary triple points (i.e., $\mu(P_i) = 16$ for $i=1, \ldots, 9$)
As in Example~\ref{example-sevtwo} the hyperplane section argument shows that the $9$  planes are independent in $\CH^1(X)/\Z\mathcal{O}_X(1)$ and we have  $\delta(X) \geq 9$.
A computation with  \cite{Sing} gives an example with $\delta(X) = 9$.

\noindent
(2) Let  $R_1$, $\ldots$, $R_8$ be nodes of an   8-nodal cubic threefold $\Cppp$ and let $\Cbbb$ be another 8-nodal cubic threefold with $\sing(\Cbbb) = \{R_1, \ldots$, $R_8 \}$.
We choose a general point $R_9 \in \Cppp \cup  \Cbbb$ and two linearly independent quadrics $Q', Q''$ that contain the eight nodes and have a double point in $R_9$.
Then the quintic given by (\ref{eq-extwotwo}) has nine triple points  $R_j$, $j =1, \ldots 9$. Direct computation  with \cite{Sing} shows that after
correct choice of cubics and quadrics the above construction does yield a quintic with $9$ ordinary triple points. Observe that  $\Cppp$ contains
exactly  five quadruplets of coplanar nodes
(see e.g. \cite[p.~196]{Finkel}).
Thus the quintic $X$ contains exactly five $2$-planes by Proposition~\ref{prpBasic}.\ref{p31-four}.   
\end{example}
Finally we discuss possible values of $\delta(X)$ for quintic threefolds $X$ with $t \leq 6$ ordinary triple points. We put    $\mathfrak{I}_3 := \cap_{i\leq t} \mbox{I}(P_i)^3$. 
Observe that by  Proposition~\ref{prpDefectFormula}  the following inequality holds
\begin{equation} \label{eq-boundfordelta}
\delta(X) \leq  11t - h_{\mathfrak{I}_3}(5).
\end{equation}

\begin{remark} \label{rem-fewpoints}
(1) Let $t=5$. Proposition~\ref{prpBasic}.\ref{p31-four} implies that $\sing(X)$ contains at most one quadruplet of coplanar points.
By direct computation (with help of (\ref{eq-boundfordelta}))  $\delta(X) = 1$ if and only if $\sing(X)$ contains  a quadruplet of coplanar points. 
Otherwise we have  $\delta(X) = 0$.

\noindent  
(2) Let  $t=6$.  If the set of six points is in general linear position (resp. contains precisely one quadruplet of coplanar points),  then the set is unique up to an automorphism.  
With help of (\ref{eq-boundfordelta}) one easily checks that  $\delta(X) = 0$ (resp. $\delta(X) = 1$).  \\
The final possibility is that there are at least two quadruplets of coplanar triple points. Then we relabel singularities so that both $P_1,P_2,P_3,P_4$ and $P_1,P_2,P_5,P_6$ are coplanar. 
Thus after applying an appropriate automorphism of $\Ps^4$ we have 
\begin{equation} 
\begin{matrix}
P_1 = (1\colon 0\colon 0\colon 0\colon 0),  & & 
P_2 = (0\colon 1\colon 0\colon 0\colon 0), \\
P_3 = (0\colon 0\colon 1\colon 0\colon 0),   & & 
P_4 = (1\colon 1\colon 1\colon 0\colon 0),\\
P_5  = (0\colon 1\colon 0\colon 1\colon 0),  & &  
P_6 = (1\colon a\colon 0\colon 1\colon 0). \\
\end{matrix}
\end{equation}
Since two $2$-planes are contained in $X$ we find that $\delta(X) \geq 2$. For general $a \in \C$ we have  $\delta(X) = 2$. However, if $a=1$ then we find that $P_3,P_4,P_5,P_6$ are coplanar and $\delta(X)=3$.
\end{remark}
For $t=7,8$ there are too many possibilities to obtain a simple classification of possible values of $\delta(X)$. This can be illlustrated by the following example.
\begin{example}
One can check that for 8 points in general position there is a quintic with isolated triple points at each of these points. 
The 8 points form an eight-dimensional family and one easily checks that for a general member the vector space of degree-5 forms in  $\mathfrak{I}_3$ is six-dimensional. This implies that $h^{2,1}(\tilde{X})\geq 13$. To show equality we can take 8 general points and calculate $\delta$ using Proposition~\ref{prpDefectFormula}. We find that  $\delta(X)=0$ and by Proposition~\ref{prop-hodgetildeX} that $h^{2,1}(\tilde{X})=13$.

On the other hand, let us assume that $P_1,\dots,P_8$ are  given by \eqref{eq-10points}. By direct check, the above  eight points contain exactly five quadruplets of coplanar points.  
A computation with  \cite{Sing} shows that there exists a quintic $X$ given by $\FX \in \mathfrak{I}_3$ with exactly eight ordinary triple points $P_i$, and 
$\delta(X)=5$.
\end{example}

\section{Fibers of the map $\pi$}\label{secFib}

In this section we assume that $X$  satisfies [\Ajeden], [\Adwa] and prove Proposition~\ref{prop-epsilon}.
For obvious reasons we only use results from Section~\ref{secBas} and Appendix~\ref{appPen}.

We maintain  Notation~\ref{notStd} (see page~\pageref{notStd}) and put $\Sigma$ to denote  the union of one-dimensional components  of $(\sing(Q))_{\red}$.
Observe that by Bertini we have  $\deg(\Sigma) \leq 3$. \\
Whenever we say that $Q$ is ruled, we mean that it is ruled by lines (c.f. \cite{TopQua}). 

\begin{lemma} Suppose $Q$ has isolated singularities then $\chi(Q)+\epsilon(Q)\leq 20$.
\end{lemma}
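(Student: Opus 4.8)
The plan is to bound the Euler number of a quartic surface $Q$ with only isolated singularities, keeping track of the contributions of the points $P_1,\dots,P_4\in\Pi_0$ and of the extra triple points of $X$ on $Q$. First I would recall the baseline: a smooth quartic $K3$ has $\chi=24$, and each isolated singularity of $Q$ of Milnor number $\mu$ decreases the Euler number of a smooth deformation by $\mu$ while the resolution increases it; concretely, if $Q$ has isolated singularities with local Milnor numbers $\mu_i$ then $\chi(Q)=24-\sum_i\mu_i+(\text{correction from the resolution})$, and in any case $\chi(Q)\le 24-\sum_i(\mu_i-1)$ when we compare with the minimal resolution (an $A_1$-point contributes $\mu=1$ and replaces a point by a $(-2)$-curve, so it leaves $\chi$ unchanged; higher singularities strictly decrease $\chi$). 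The key point is that by Proposition~\ref{prop-fibers} the four points $P_1,\dots,P_4$ are \emph{always} singular points of $Q$, each of Milnor number at least $1$, and whenever $P_j$ fails to be a node its Milnor number jumps by a definite amount.

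Next I would split according to $s(Q)$ and $r(Q)$. Recall $\epsilon(Q)=s(Q)+8r(Q)$. For the $r(Q)$ extra triple points of $X$ lying on $Q\setminus\Pi_0$: locally $X$ is given by $z_1g_1+z_2g_2$ with $Q=V(g_1)$ (after restricting to the hyperplane $z_2=0$ as in the proof of Proposition~\ref{prpBasic}.\ref{p31-seven}), and such a point is a double point of $Q$ whose Milnor number is at least $8$ because $g_1$ has a point of multiplicity $2$ with a rather degenerate tangent cone; I would show $\mu\ge 8$ at each such point, which exactly matches the weight $8$ in the definition of $\epsilon$. For the points $P_j\in\Pi_0$: by \eqref{eq-always-double} each $P_j$ is at least a node, contributing $\mu\ge1$, and if $P_j$ is not a node then $\mu\ge 2$, which is the weight $1$ appearing in $s(Q)$ plus the baseline $1$. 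Putting these together with $\chi(Q)\le 24-\sum_i(\mu_i-1)$ gives
\[
\chi(Q)+\epsilon(Q)\;\le\;24-\sum_{i}(\mu_i-1)+s(Q)+8r(Q)\;\le\;24,
\]
since the $s(Q)$ non-nodal $P_j$ contribute at least $2$ each (offsetting their weight $1$ in $\epsilon$ against a loss of $1$ in $\chi$) and each of the $r(Q)$ extra points contributes $\mu-1\ge 7$, which does \emph{not} fully offset its weight $8$; so I need to be slightly more careful. The honest bookkeeping is: baseline $24$, then $-\sum_{P_j\text{ node}}0$, then for each non-nodal $P_j$ a loss of at least $1$ in $\chi$ against a gain of $1$ in $\epsilon$ (net $\le 0$), and for each extra triple point a loss of $\mu-1\ge 7$ in $\chi$ against a gain of $8$ in $\epsilon$ (net $\le 1$); so the clean statement I should prove is that when $Q$ has \emph{only isolated} singularities one in fact has $r(Q)\le 0$, i.e.\ no extra triple point of $X$ can lie on a $Q$ with isolated singularities while keeping $\chi(Q)+\epsilon(Q)>20$ — and more precisely I will argue that a quartic surface with isolated singularities carrying one of these $\mu\ge 8$ double points together with four further double points $P_1,\dots,P_4$ simply has $\chi$ small enough that $\chi(Q)+\epsilon(Q)\le 20$. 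This reduces everything to: if $r(Q)\ge 1$ then $\sum\mu_i$ is large enough (at least $8r(Q)+4$ counting the $P_j$) to force $\chi(Q)\le 24-\sum(\mu_i-1)\le 20-8r(Q)+\text{(slack)}$, hence $\chi(Q)+\epsilon(Q)\le 20$; and if $r(Q)=0$ then $\epsilon(Q)=s(Q)\le 4$ and the four double points already give $\chi(Q)\le 20$, so again $\chi(Q)+\epsilon(Q)\le 24$ but in the isolated-singularity regime one checks $\le 20$.

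The main obstacle I anticipate is making the inequality $\mu\ge 8$ (for an extra triple point of $X$ lying on $Q$, viewed as a point of $Q$) completely rigorous, and correspondingly pinning down the exact Euler-number bookkeeping so that the slack never pushes $\chi(Q)+\epsilon(Q)$ above $20$ in the isolated case; this is the place where one must use that $X$ has an \emph{ordinary} triple point there — so the cubic tangent cone is a cone over a smooth cubic surface — to control the local form of $g_1$. I would handle this by the local analysis already sketched in the proof of Proposition~\ref{prpBasic}.\ref{p31-seven}: picking coordinates so $X$ is $z_1g_1+z_2g_2=0$ with $g_1,g_2$ having no common tangent at the point, so that $V(g_1)$ has at that point an isolated double point whose local algebra has length at least $9$ (hence $\mu\ge 8$). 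The remaining steps — the Euler-characteristic estimate for surfaces with isolated singularities and assembling the cases — are routine, and the conclusion $\chi(Q)+\epsilon(Q)\le 20$ follows. The complementary case, where $Q$ has a one-dimensional singular locus, is exactly what Proposition~\ref{prop-epsilon} addresses and will be treated separately in the subsequent lemmas of this section.
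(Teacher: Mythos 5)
Your overall strategy --- Milnor-number bookkeeping at the singular points of the quartic $Q$, weighed against $\epsilon(Q)=s(Q)+8r(Q)$ --- is the same as the paper's, but the inequality you actually commit to, $\chi(Q)\le 24-\sum_i(\mu_i-1)$, is too weak to close the argument, and you notice the shortfall without repairing it. The correct statement is the \emph{equality} $\chi(Q)=24-\sum_{R\in\sing(Q)}\mu(R)$ for a quartic surface with isolated singularities (the surface case of \cite[Corollary 5.4.4]{Dim}); there is no ``correction from the resolution'' because one compares $Q$ with a smooth quartic, not with its minimal resolution. With the equality the count is immediate: the four points $P_1,\dots,P_4$ are always singular on $Q$, contributing at least $4+s(Q)$ to $\sum\mu$ since the $s(Q)$ non-nodal ones have $\mu\ge 2$; and each of the $r(Q)$ triple points of $X$ off $\Pi_0$ is a \emph{triple} point of $Q$ (locally $Q$ coincides with the hyperplane section of $X$ there), so $\mu\ge(3-1)^3=8$; hence $\chi(Q)\le 24-4-s-8r=20-\epsilon(Q)$. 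Your weaker inequality discards one unit of $\chi$ per singular point, which destroys exactly the baseline $-4$ coming from the four guaranteed double points and turns the $8r$ into $7r$, so it only yields $\chi(Q)+\epsilon(Q)\le 24+r$.

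The patch you propose --- that $r(Q)\le 0$ whenever $Q$ has isolated singularities --- is false: a hyperplane through $\Pi_0$ and an extra triple point of $X$ can perfectly well cut out an irreducible residual quartic with isolated singularities, and such a $Q$ has $r(Q)\ge 1$. Likewise the concluding ``one checks $\le 20$'' in the $r=0$ case is an assertion, not an argument. A secondary but genuine confusion: you describe the extra triple points as \emph{double} points of $Q$ with local model $z_1g_1+z_2g_2$ borrowed from Proposition~\ref{prpBasic}.\ref{p31-seven}; that computation concerns the length of a zero-dimensional intersection scheme on a quadric contained in $X$, not the Milnor number of a point on a hyperplane section, and the relevant fact here is simply that an isolated surface singularity of multiplicity $3$ has Milnor number at least $2^3=8$. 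Once you replace your inequality by the exact Euler-number formula and use the correct local description of the $r(Q)$ points, the proof collapses to the three lines the paper gives.
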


\begin{proof}
Since $\Sigma = \emptyset$ we have $\chi(Q)=24-\sum_{R\in \sing(Q)} \mu(R)$, where $\mu(R)$ is the Milnor number of the singular  point $R \in Q$. If $R$ is a triple point of $X$ and $Q\not \in \Pi_0$ then $R$ is a triple point of $Q$ and $\mu(R)\geq 8$. If $R \in \sing(X) \cap \Pi_0$ is not a node on $Q$, then $\mu(R) \geq 2$. Hence $\chi(Q)\leq 20-8r-s$.
\end{proof}

\subsection{$Q$ irreducible}
Assume now that $Q$ is irreducible and that $Q$ has non-isolated singularities. 

\begin{observation}\label{obs-ruledirred} If $Q$ is irreducible and ruled,  then $\Sigma$ contains a (possibly reducible) reduced conic $K$ such that $(Q \cap \Pi_0)_{\red} = K$.
\end{observation}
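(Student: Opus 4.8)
The goal is to show that if $Q$ is irreducible, ruled, and has non-isolated singularities, then $\Sigma$ contains a reduced conic $K$ (possibly a pair of lines or a double line's reduction) with $(Q\cap\Pi_0)_{\mathrm{red}}=K$. The plan is to exploit the classification of irreducible ruled quartic surfaces in $\Ps^3$ together with the fact that the intersection $Q\cap\Pi_0$ is already tightly constrained: by \eqref{eq-always-double} the plane curve $Q\cap\Pi_0$ has double points at each of $P_1,\dots,P_4$, yet it is a plane quartic, so B\'ezout forces these double points to be very special. First I would record that $Q\cap\Pi_0$ is a plane quartic curve (by Proposition~\ref{prop-fibers}.1 applied to the relevant $H$, the residual surface $Q$ really is a quartic, and $\Pi_0\not\subset Q$) having (at least) double points at the four points $P_1,\dots,P_4$ of $\sing(X)\cap\Pi_0$; a quartic with four double points is either non-reduced or is a union of conics/lines summing to degree four, and in either case its reduction is supported on a conic or a pair of lines through pairs of the $P_j$'s.

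\textbf{Key steps.} Second, I would bring in the hypothesis that $Q$ is ruled and has a one-dimensional singular locus $\Sigma$ with $1\le\deg\Sigma\le 3$. The classical description of irreducible ruled quartic surfaces (see the reference to \cite{TopQua}) says that the singular locus of such a surface is a curve of degree at most $3$, typically a line (possibly with extra isolated points) or a twisted cubic in the non-developable cases, but when the surface is not a cone the double curve is a rational curve of low degree. The point I would press is that $\Sigma\cap\Pi_0\subseteq\sing(Q)\cap\Pi_0$, and since every point of $\sing(X)\cap\Pi_0$ that is \emph{not} a node of $Q$ still has $\mu\ge 2$, while nodes are isolated singularities, the non-isolated part $\Sigma$ meeting $\Pi_0$ must run through the structure of the plane quartic $Q\cap\Pi_0$. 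I would then argue that because $Q$ is irreducible the curve $\Sigma$ cannot itself contain $\Pi_0\cap Q$ as a component unless $(Q\cap\Pi_0)_{\mathrm{red}}$ has degree $\le 2$ — i.e. the plane quartic section is non-reduced (reduction a conic, necessarily $=\Sigma\cap\Pi_0$ up to the residual structure) or splits off a repeated line/conic. Matching the degree bound $\deg\Sigma\le 3$ against the quartic section and eliminating the impossible configurations (a genuine irreducible quartic plane section would have only the four isolated double points $P_j$, contradicting non-isolated singularity along $\Pi_0$; three concurrent lines would violate Proposition~\ref{prpBasic} on collinearity of triple points) leaves exactly the stated conclusion: $(Q\cap\Pi_0)_{\mathrm{red}}$ is a reduced conic $K$ (smooth conic, or two distinct lines) contained in $\Sigma$.

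\textbf{Main obstacle.} The delicate point is the case analysis on irreducible ruled quartics: I need to be sure which irreducible ruled quartic surfaces can even carry a plane section that is a non-reduced quartic or a quartic with four double points, and to rule out the degenerate possibility that $\Sigma$ is a line disjoint from $\Pi_0$ while $Q\cap\Pi_0$ remains an irreducible plane quartic with four nodes (which is arithmetic-genus impossible for a plane quartic anyway — genus drops below zero — so this is clean). The real work is confirming that $\Sigma\supseteq K$ rather than merely $\Sigma\cap\Pi_0\subseteq K$: this requires noting that along $K$ the surface $Q$ must be singular because two sheets of the ruling collapse there, or equivalently that the normalization of $Q$ maps $K$ two-to-one — a standard fact for irreducible ruled surfaces whose plane section degenerates — so $K\subseteq\sing(Q)$ and $K$ is one-dimensional, hence $K\subseteq\Sigma$. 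I expect assembling this last containment cleanly from the ruled-surface geometry, rather than the plane-curve bookkeeping, to be where care is needed.
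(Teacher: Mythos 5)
Your argument has a genuine gap at its core: the plane\hyphenation{curve} bookkeeping you lean on does not by itself force $Q\cap\Pi_0$ to be non-reduced. A plane quartic with double points at four points in general position need not have reduction of degree $\le 2$: two distinct smooth conics meeting transversally in $P_1,\dots,P_4$, or a line together with a nodal cubic, are reduced quartics with four double points whose reduction has degree four. So your claim that such a quartic ``is either non-reduced or \dots its reduction is supported on a conic or a pair of lines'' is false, and the conclusion $(Q\cap\Pi_0)_{\red}=K$ does not follow from the intersection data alone. (Your genus remark only shows $Q\cap\Pi_0$ is reducible, which is not enough.)

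The missing ingredient — and the one fact the paper's proof actually rests on — is \cite[Proposition 1.8(2)]{TopQua}: an \emph{irreducible ruled} quartic surface has no isolated singular points, i.e.\ $\sing(Q)=\Sigma$. Your parenthetical ``possibly with extra isolated points'' concedes exactly what must be excluded. Once $\sing(Q)=\Sigma$ is in hand, all four double points $P_1,\dots,P_4$ of $Q$ lie on the curve $\Sigma$; since they are coplanar with no three collinear and $\deg\Sigma\le 3$, the part of $\Sigma$ lying in $\Pi_0$ must be a (possibly reducible) conic $K$ through them, and since $K\subset\sing(Q)$ it occurs with multiplicity at least two in the plane quartic $Q\cap\Pi_0$, forcing $Q\cap\Pi_0=2K$. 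Your final paragraph tries to establish $K\subseteq\Sigma$ from the ruling ``collapsing'' along $K$, but this runs the logic backwards: without first knowing the $P_i$ lie on $\Sigma$ you cannot even identify a candidate conic $K$, and the collapsing heuristic is not a proof. As written, the proposal does not close.
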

\begin{proof}
By \cite[Proposition 1.8(2)]{TopQua}  
we have $\sing(Q) = \Sigma$. By construction $\Sigma$ contains the coplanar points 
$P_1$, $\ldots$, $P_4$. From Proposition~\ref{prpBasic}.\ref{p31-two} the curve  $\Sigma$ contains a (possibly reducible) conic $K \subset \Pi_0$.
Obviously, $K$ comes up with mutiplicity at least two in the quartic  $Q \cap \Pi_0$.
\end{proof}

\begin{remark} \label{rem-notriple-line}
Observe that $Q$ cannot contain a  line of triple points. Indeed, the quartic $Q$ would be ruled then and the equality  $\Sigma = \ell$ would hold.
This is impossible by Observation~\ref{obs-ruledirred}. 
\end{remark}

After those preparations we can list possibilities for $\Sigma$.
\begin{proposition}\label{prpClass} If $Q$ is irreducible and $\Sigma  \neq \emptyset$ then one of the following occurs:
\begin{enumerate}
\item \label{quartic1}      $\Sigma$ is a line.
\item  \label{quartic2}     $\Sigma$ is a (possibly reducible)  conic.
\item  \label{quartic4}     $\Sigma$ is the union of three lines $\ell_1,\ell_2,\ell$ such that $\ell_1 \cap \ell = \emptyset$, $(Q \cap \Pi_0)_{\red} = (\ell_1 \cup \ell_2)$. In this case $Q$ is ruled.
\item  \label{quartic5}     $\Sigma$ is the union of a line $\ell$ and an irreducible conic $K$. In this case $K \cap \ell \neq \emptyset$, $Q$ is ruled and $(Q \cap \Pi_0)_{\red} = K$.
\end{enumerate}
\end{proposition}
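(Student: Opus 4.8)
The plan is to classify the possible one-dimensional parts of $\sing(Q)$ by combining the constraints we already have at our disposal: $\deg(\Sigma)\le 3$ by Bertini (stated just above the lemma), $\Sigma\supseteq\{P_1,\dots,P_4\}$ (since the $P_i$ are double points of $Q$ by \eqref{eq-always-double}), the fact that no three of the $P_i$ are collinear (Proposition~\ref{prpBasic}.\ref{p31-two}), Remark~\ref{rem-notriple-line} (no line of triple points), and Observation~\ref{obs-ruledirred} (if $Q$ is ruled then $\Sigma$ contains a reduced conic $K$ with $(Q\cap\Pi_0)_{\mathrm{red}}=K$, and $\sing(Q)=\Sigma$). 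The key dichotomy is whether $Q$ is ruled or not; by \cite[Proposition~1.8]{TopQua} a quartic surface with a non-isolated singularity that is \emph{not} ruled has a $1$-dimensional singular locus that is either a line, a conic, or (in the non-ruled case) has degree at most $2$ — so in the non-ruled situation one lands in cases \ref{quartic1} or \ref{quartic2}, and it remains to analyze the ruled case to extract \ref{quartic4} and \ref{quartic5}.

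First I would dispose of the non-ruled case. If $Q$ is irreducible, not ruled, and $\Sigma\ne\emptyset$, then by the classification in \cite{TopQua} the curve $\Sigma$ has degree $1$ or $2$; since it must contain $P_1,\dots,P_4$ which are coplanar (they all lie in $\Pi_0$), it is either a line or a (possibly reducible) conic — but a line cannot contain three of the $P_i$, so in fact $\Sigma$ is a conic unless… actually the constraint that $\Sigma\supseteq\{P_1,\dots,P_4\}$ with no three collinear forces $\Sigma$ to be a conic in this case, which is \ref{quartic2}; but I must be careful, because \ref{quartic1} (a line) genuinely occurs — this happens precisely when $Q\subset\Pi_0$ is impossible, so the line case arises only in a degenerate configuration where fewer of the $P_i$ are counted; I would double-check against the hypothesis that here $\sing(X)\cap\Pi_0=\{P_1,P_2,P_3,P_4\}$ is genuinely four points. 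The safest route: split first on $\#(\Sigma\cap\Pi_0\cap\sing(X))$ and on whether $Q\cap\Pi_0$ contains a line with multiplicity.

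Next I would treat the ruled case, which is where the work concentrates. By Observation~\ref{obs-ruledirred}, $\sing(Q)=\Sigma\supseteq K$ where $K$ is a reduced conic with $(Q\cap\Pi_0)_{\mathrm{red}}=K$, so $K$ accounts for degree $2$ of $\Sigma$, leaving room for at most one further line $\ell$ (since $\deg\Sigma\le 3$), or none. If $K$ is reducible, write $K=\ell_1\cup\ell_2$; each $\ell_i$ carries two of the four coplanar points, and the residual line $\ell$ (if present) carries triple points of $Q$ — here Remark~\ref{rem-notriple-line} and Proposition~\ref{prpBasic}.\ref{p31-two} force $\ell\cap\ell_1=\emptyset$ (otherwise three of the $P_i$, or a forbidden line of triple points, appears), giving \ref{quartic4}. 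If $K$ is irreducible and there is an extra line $\ell$, then $\ell$ must meet $K$ (a quartic ruled surface cannot have a singular locus whose components are mutually disjoint of that configuration — one uses the ruling: every ruling line meets $\Sigma$, and a disjoint line-plus-conic cannot both be met by the same pencil of lines unless they intersect), and $(Q\cap\Pi_0)_{\mathrm{red}}=K$ as stated, which is \ref{quartic5}. If there is no extra line, $\Sigma=K$, subsumed in \ref{quartic2}.

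The main obstacle I expect is the fine structure of the ruled case: showing that in \ref{quartic5} the line $\ell$ must actually meet the conic $K$, and in \ref{quartic4} that $\ell_1\cap\ell=\emptyset$ while $\ell_2\cap\ell$ may be nonempty — this requires understanding how the ruling of $Q$ interacts with $\Pi_0$ and with $\Sigma$, i.e.\ that the double curve of a ruled quartic is swept by the pencil of rulings in a controlled way (each ruling line is bitangent to, or meets doubly, the double curve). I would lean on \cite[Proposition~1.8]{TopQua} and the paired-line structure of double curves of ruled quartics, rather than re-deriving it. The coplanarity and no-three-collinear constraints on $P_1,\dots,P_4$ do most of the combinatorial bookkeeping once the ambient ruled-surface geometry is in place.
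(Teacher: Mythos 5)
Your plan rests on two claims that do not hold as stated, and together they leave the core of the argument missing. First, the inclusion $\Sigma \supseteq \{P_1,\dots,P_4\}$ is false in general: the $P_i$ are double points of $Q$, hence lie in $\sing(Q)$, but $\Sigma$ is only the union of the \emph{one-dimensional} components of $(\sing(Q))_{\red}$, and the $P_i$ may perfectly well be isolated singularities of $Q$. The inclusion holds only when $\sing(Q)=\Sigma$, which \cite[Proposition~1.8(2)]{TopQua} supplies precisely in the ruled case — that is exactly how Observation~\ref{obs-ruledirred} uses it. Your non-ruled analysis is built on this false inclusion; it is what drives you into the visible confusion about whether case~\ref{quartic1} can occur (it can: a line through at most two of the $P_i$, the others being isolated double points), and it is also your only mechanism for excluding $\Sigma=$ two skew lines. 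Second, the statement you attribute to \cite[Proposition~1.8]{TopQua} — that a non-ruled irreducible quartic has $\deg\Sigma\le 2$ with $\Sigma$ planar — is not what that result says as it is used in this paper, and it is exactly the content that must be \emph{proved} here. The paper's proof supplies it case by case: planar $\Sigma$ has degree $\le 2$ because $Q$ is irreducible; two skew lines or a twisted cubic inside $\sing(Q)$ force $Q$ to be ruled by \cite[Lemma~4.3]{GonRams}, which contradicts Observation~\ref{obs-ruledirred} since such a $\Sigma$ contains no planar conic equal to $(Q\cap\Pi_0)_{\red}$; three lines contain a skew pair, so $Q$ is ruled and the Observation forces $(Q\cap\Pi_0)_{\red}=\ell_1\cup\ell_2$; and for $\Sigma=\ell\cup K$ with $K$ an irreducible conic, ruledness is shown directly by cutting with a general plane $\Pi\supset\ell$ and observing that the conic residual to $2\ell$ acquires a double point at a point of $K\cap\Pi$ away from $\ell$. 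Deciding ``ruled versus non-ruled'' first does not dispense with these steps, because proving that the degree-$3$ configurations are ruled is the whole point.

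Two smaller gaps: your justification that $\ell$ must meet $K$ in case~\ref{quartic5} (``a disjoint line-plus-conic cannot both be met by the same pencil of lines'') is not an argument; the clean reason is that $\ell\subset Q$ meets $\Pi_0$ in a point of $(Q\cap\Pi_0)_{\red}=K$. Likewise, ``the residual line $\ell$ carries triple points of $Q$'' in your discussion of case~\ref{quartic4} is unsupported and unnecessary: $\ell_1\cap\ell=\emptyset$ is merely a choice of labelling, since three distinct double lines can be neither coplanar (degree of a plane section of the irreducible $Q$) nor concurrent (multiplicity of $Q$ at the common point), so some pair is skew.
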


\begin{proof}
Suppose that $\Sigma$ is a planar curve. Then $\deg(\Sigma) \leq 2$ (recall that $Q$ is irreducible). This is (\ref{quartic1}), (\ref{quartic2}).

If $\Sigma$ consists of two skew lines, then $Q$ is ruled  by  \cite[Lemma 4.3]{GonRams}. This is impossible by Observation~\ref{obs-ruledirred}.    

Assume that  $\deg(\Sigma)=3$. If $\Sigma$ is irreducible, then $\Sigma$ is a twisted cubic and $Q$ is ruled by \cite[Lemma 4.3]{GonRams}. This is impossible by Observation~\ref{obs-ruledirred}. \\
Suppose that $\Sigma$ consists of lines  $\ell_1,\ell_2,\ell$. Then two of them are skew (say $\ell_1 \cap \ell = \emptyset$), so $Q$ is ruled  by  \cite[Lemma 4.3]{GonRams}. 
By Observation~\ref{obs-ruledirred} we can assume that $\Pi_0 \cap Q  = (\ell_1 \cup \ell_2)$. This yields $\ell \cap \ell_2 \neq \emptyset $ and we have   (\ref{quartic4}).   \\
Finally, let $\Sigma =  \ell \cup K$, where $K$ is  an irreducible conic and let $\Pi$ be a general plane through the line $\ell$.
Since $K$ and $\ell$ meet in at most one point, the conic residual to $2\ell$ in $Q\cap \Pi$ has a double point (in one of the two points in $K\cap \Pi$), so it is reducible.
Thus $Q$ is ruled. By Observation~\ref{obs-ruledirred} we have (\ref{quartic5}).
\end{proof}

\begin{lemma} \label{lemm-Roneach}
Suppose $Q$ is irreducible, $\Sigma\neq \emptyset$ and $Q$ contains a triple point $R$ of $X$ which is not on $\Pi_0$. Then $R$ is on each irreducible component of $\Sigma$ and $\Sigma$ consists of a line or two intersecting lines. 
\end{lemma}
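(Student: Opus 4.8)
The goal is to show that if an irreducible quartic $Q$ with $\Sigma\neq\emptyset$ carries a triple point $R$ of $X$ off $\Pi_0$, then $R$ lies on every component of $\Sigma$ and $\Sigma$ is a line or a pair of intersecting lines. The plan is to go through the four cases of Proposition~\ref{prpClass} one by one, using the fact (Remark~\ref{rem-notriple-line}) that $Q$ cannot contain a line of triple points together with local/intersection-theoretic bounds on how a triple point of a quartic surface can sit relative to the one-dimensional singular locus $\Sigma$. The key general principle I would isolate first is: if $R\in Q$ is a triple point of the surface and $\ell$ is a line component of $\Sigma$ with $R\notin\ell$, then the plane $\langle R,\ell\rangle$ cuts $Q$ in a plane quartic curve having a triple point at $R$ and a double line along $\ell$ (since $\ell\subset\sing Q$); a plane quartic with a triple point plus a double line forces the quartic to be the union of that double line and a line through $R$ — equivalently $\langle R,\ell\rangle$ meets $Q$ in $2\ell + (\text{line through }R)$ — and pushing this reasoning (letting the plane vary, or examining the residual) leads to $Q$ being ruled by lines through $R$, hence (by Observation~\ref{obs-ruledirred} and Remark~\ref{rem-notriple-line}) a contradiction unless the configuration degenerates. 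So the leverage in each case is "$R$ off a line of $\Sigma$ $\Rightarrow$ $Q$ too degenerate."

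First I would dispose of the planar cases. If $\Sigma$ is a line $\ell$ (case~\ref{quartic1}): here $R$ is automatically "on each irreducible component" iff $R\in\ell$, so I must rule out $R\notin\ell$. If $R\notin\ell$, the span $\langle R,\ell\rangle$ cuts $Q$ in a quartic curve with a triple point at $R$ and $2\ell$ as a component, forcing $Q\cap\langle R,\ell\rangle = 2\ell + \ell'$ with $R\in\ell'$; varying over hyperplanes (or noting $R$ is then a point through which lines of $Q$ pass and combining with the structure of $\Sigma$) forces $Q$ ruled, contradicting Observation~\ref{obs-ruledirred}, whose conclusion $\Sigma\supseteq$ a conic contradicts $\Sigma=\ell$. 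Actually the cleanest route: $\Sigma=\ell$ together with $Q$ irreducible and a triple point $R\notin\ell$ of $Q$ — project $Q$ from $R$; a triple point contributes multiplicity $\geq 9$ to the relevant degree count against the degree-$4$ surface, which is incompatible with $\Sigma$ being only a line. I would pick whichever of these two arguments is shortest and make it precise; then $R\in\ell$ and $\Sigma$ is a line as claimed. If $\Sigma$ is a conic $K$ (case~\ref{quartic2}): I would show $Q$ then cannot have a triple point off $\Pi_0$ at all, because $Q\cap\Pi_0$ contains $2K$ if $K\subset\Pi_0$, and the presence of an extra triple point $R$ with the double conic forces, via a plane-section / B\'ezout count in $\langle R,K\rangle$ (a plane cubic from the span meeting $Q$ along $2K$ is impossible on degree grounds, or a plane quintic... — I would set the intersection up carefully), a contradiction with irreducibility. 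Here the statement to prove is vacuous-or-contradiction: the conclusion forces $\Sigma$ to be a line or two intersecting lines, so I must show case~\ref{quartic2} simply does not occur when such an $R$ exists.

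Then the two ruled cases~\ref{quartic4} and~\ref{quartic5}. In case~\ref{quartic4}, $\Sigma=\ell_1\cup\ell_2\cup\ell$ with $\ell_1\cap\ell=\emptyset$ and $Q$ ruled with $(Q\cap\Pi_0)_{\mathrm{red}}=\ell_1\cup\ell_2$. I would argue that $R$, being a triple point of $Q$ off $\Pi_0$, must lie on $\ell$ (the component not in $\Pi_0$) — because $Q$ is ruled and a triple point of a ruled quartic lies on its singular locus $\Sigma$ by \cite[Proposition 1.8(2)]{TopQua}, so $R\in\Sigma$, and $R\notin\Pi_0$ forces $R\notin\ell_1\cup\ell_2$, hence $R\in\ell$. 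But then the ruling lines through $R$ (there are at least two, or a pencil, since $R$ is a triple point) meet $\ell_1$ and $\ell_2$; using $\ell_1\cap\ell=\emptyset$ and counting lines of $Q$ through $R$ one finds $Q$ is forced to contain a line of triple points (the lines through $R$ in the ruling accumulate singularity), contradicting Remark~\ref{rem-notriple-line} — so case~\ref{quartic4} does not occur. Case~\ref{quartic5}, $\Sigma=\ell\cup K$ with $K$ an irreducible conic, $K\cap\ell\neq\emptyset$, $Q$ ruled, $(Q\cap\Pi_0)_{\mathrm{red}}=K$: again $R\in\Sigma$ and $R\notin\Pi_0$; if $R\in K$ then $R\in\Pi_0$ (as $K\subset\Pi_0$), contradiction, so $R\in\ell$ and $R\notin K$ — but then $R$ is not on the component $K$, violating "on each component," and I would derive an outright contradiction: a triple point on the line $\ell$ but off the conic $K$ of a ruled quartic again forces a line of triple points or forces $Q$ reducible. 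So the surviving possibility is exactly: $\Sigma$ is a line, or two intersecting lines (a degenerate sub-case of case~\ref{quartic4} where $\ell_1\cap\ell\neq\emptyset$, which is not literally in the Proposition~\ref{prpClass} list as stated but arises when the conic $\ell_1\cup\ell_2$ degenerates appropriately), and $R$ lies on all components — i.e., at the intersection point when there are two lines.

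\textbf{Main obstacle.} The hard part will be the ruled cases~\ref{quartic4} and~\ref{quartic5}: turning the vague assertion "a triple point off one component of $\Sigma$ forces a line of triple points or reducibility" into a rigorous count. The clean way is probably to analyze the ruling explicitly — a ruled quartic is (birationally) a $\mathbf{P}^1$-bundle over a curve and $\Sigma$ together with the multiple directrices is well-understood — but matching the $\Pi_0$-constraint $(Q\cap\Pi_0)_{\mathrm{red}}=K$ or $\ell_1\cup\ell_2$ with the location of $R$ requires care, and one has to be sure no non-ruled degenerate quartic sneaks in. I would lean on \cite{TopQua} and \cite{GonRams} for the structure of $\sing(Q)$ and the behaviour of plane sections through a line of $\Sigma$, and handle the triple point by the plane-section argument $\langle R, \ell_i\rangle\cap Q = 2\ell_i + (\text{line through }R)$, iterating to exhibit either the forbidden line of triple points or a nontrivial factorization of the defining quartic.
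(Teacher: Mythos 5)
There are genuine gaps. First, your ``key general principle'' is both misstated and underused. If $R\notin\ell$ and $\ell$ is a double line of $Q$, the plane $\langle R,\ell\rangle$ cuts $Q$ in $2\ell$ plus a residual \emph{conic}, and since $R\notin\ell$ the multiplicity of this quartic curve at $R$ is at most $2<3$ --- an immediate contradiction. Your version (``$2\ell+(\text{line through }R)$'', which is degree $3$, and the conclusion ``$Q$ is ruled'') does not parse degree-wise and draws the wrong consequence; correctly executed, the plane-section count already forces $R$ onto every \emph{line} component, with no detour through ruledness. More importantly, you have no argument covering a \emph{conic} component $K$ of $\Sigma$ not containing $R$ (needed in case~\ref{quartic5}, where $K$ need not be coplanar with $R$). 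The paper handles all components uniformly and in one stroke: any line through $R$ meeting a component $C\not\ni R$ meets $Q$ with multiplicity $\geq 3+2=5$, hence lies in $Q$, so the cone over $C$ with vertex $R$ is a component of $Q$ of degree $\deg(C)\leq 2$, contradicting irreducibility. Once ``$R$ on every component'' is established this way, cases~\ref{quartic4} and~\ref{quartic5} die in one line ($\ell_1\cap\ell=\emptyset$ in the former; $K\subset\Pi_0\not\ni R$ in the latter), with none of the ``accumulating singularity'' hand-waving you invoke --- which is not an argument, and in case~\ref{quartic5} your reasoning is circular, since you appeal to ``violating on each component'' before having proved it.

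Second, you mishandle case~\ref{quartic2}. You assert that the conic case ``simply does not occur,'' but the conclusion ``two intersecting lines'' of the lemma \emph{is} the reducible sub-case of case~\ref{quartic2}; you then mislocate it as a degenerate sub-case of case~\ref{quartic4}, which it is not. The correct argument (and the paper's): since $R$ lies on $\Sigma$, the plane $\Pi$ spanned by the conic contains $R$, and $\Pi\cap Q$ is the double conic $2\Sigma$, which must have multiplicity $\geq 3$ at $R$; an irreducible conic doubled has only multiplicity $2$, so $\Sigma$ must be a reducible conic, i.e.\ two lines meeting at $R$. Your proposed B\'ezout count ``in $\langle R,K\rangle$'' aiming at a contradiction with irreducibility is aimed at the wrong target: the goal in this case is not to exclude it but to force reducibility of the conic.
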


\begin{proof} Let $C\subset \Sigma$ be an irreducible component not containing $R$. Then any line $\ell$ through  $R$ that meets $C$  intersects $Q$ with multiplicity at least 5, and is therefore contained in $Q$. In particular, the cone over $C$ with vertex $R$ is a component of $Q$. Since $\deg(C)\leq 2$ this implies that the quartic $Q$ contains a surface of degree at most 2, contradicting the assumption that $Q$ is irreducible. Hence $R$ is on every irreducible component of $\Sigma$, which rules out the cases  (\ref{quartic4}), (\ref{quartic5}) of Proposition~\ref{prpClass}. 

Assume that the claim of  Proposition~\ref{prpClass}.\ref{quartic2} holds. Let  $\Pi$ be the plane that contains the conic $\Sigma$. 
Observe that  every plane section of $Q$ passing through $R$ has a point of  multiplicity at least 3. In particular,  $\Pi \cap Q$ is a double conic having a triple point at $R$, i.e., the conic $\Sigma$  is reducible.
\end{proof}

\begin{lemma}\label{lemPosTriple} If $Q$ is irreducible and  ruled,  then $r(Q)=0$ and $s(Q)=4$. \end{lemma}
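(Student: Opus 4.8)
The plan is to argue by contradiction: suppose $Q$ is irreducible, ruled, and contains a triple point $R$ of $X$ with $R\notin\Pi_0$, so that $r(Q)\geq 1$. By Observation~\ref{obs-ruledirred}, since $Q$ is irreducible and ruled, $\Sigma\neq\emptyset$ and in fact $(Q\cap\Pi_0)_{\mathrm{red}}=K$ for a reduced (possibly reducible) conic $K\subset\Pi_0$, and $\sing(Q)=\Sigma$ by \cite[Proposition 1.8(2)]{TopQua}. Now I would invoke Lemma~\ref{lemm-Roneach}: because $Q$ is irreducible, $\Sigma\neq\emptyset$, and $Q$ contains the triple point $R\notin\Pi_0$, that lemma forces $\Sigma$ to be either a single line or two intersecting lines, and $R$ lies on every irreducible component of $\Sigma$. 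In particular $\Sigma$ is a planar curve, and every component of $\Sigma$ passes through $R$.

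The key observation is now a conflict between the location of $\Sigma$ and the presence of $K$. Since $\Sigma$ is planar, say $\Sigma$ spans (at most) a plane $\Pi'$, while the double conic $K$ lies in $\Pi_0$. But $K\subseteq (Q\cap\Pi_0)_{\mathrm{red}}\subseteq\sing(Q)=\Sigma$, so the conic $K$ must be contained in $\Sigma$. As $\Sigma$ is a line or a union of two intersecting lines (hence contained in a plane, and of the form described), the only way $K\subseteq\Sigma$ can hold is if $K$ is a pair of lines, each a component of $\Sigma$, so $\Sigma\supseteq K$ consists of exactly the two lines making up $K$, and both of these lines lie in $\Pi_0$. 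But then, by Lemma~\ref{lemm-Roneach}, $R$ lies on each of these two lines, hence $R\in\Pi_0$ — contradicting $R\notin\Pi_0$. This shows $r(Q)=0$.

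With $r(Q)=0$ established, it remains to show $s(Q)=4$, i.e.\ none of $P_1,\dots,P_4$ is a node of $Q$. Recall $\sing(Q)=\Sigma$ since $Q$ is irreducible and ruled, so every singular point of $Q$ — in particular each $P_i$, which lies on $Q\cap\Pi_0$ and hence on $\Sigma\cap\Pi_0=K$ — is a non-isolated singularity of $Q$. A node is an isolated singularity, so no $P_i$ can be a node of $Q$; therefore $s(Q)=4$ by Notation~\ref{notStd}. The main obstacle is the first part: one must be careful that "ruled" is used in the sense of ruled by lines, so that \cite[Proposition 1.8(2)]{TopQua} applies to give $\sing(Q)=\Sigma$, and that Observation~\ref{obs-ruledirred} genuinely forces the double conic $K$ into $\Sigma$; combining this with the rigidity of $\Sigma$ coming from Lemma~\ref{lemm-Roneach} is what produces the contradiction. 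Once that is in place, the $s(Q)=4$ claim is immediate from the same structural facts.
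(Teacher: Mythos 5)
Your proof is correct and uses the same two ingredients as the paper's: Observation~\ref{obs-ruledirred} to place the reduced conic $K=(Q\cap\Pi_0)_{\red}$ inside $\Sigma$ (which immediately gives $s(Q)=4$), and Lemma~\ref{lemm-Roneach} to force any triple point off $\Pi_0$ onto every component of $\Sigma$, hence into $\Pi_0$, a contradiction. The paper's version is slightly shorter — it only needs that $\Pi_0$ contains \emph{a} component of $\Sigma$, without pinning down that $\Sigma$ must be the two lines of $K$ — but the route is essentially identical.
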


\begin{proof}
Observation~\ref{obs-ruledirred} shows that none of the points $P_1$, $\ldots$, $P_4$ are  isolated singularities of $Q$ and we have  $s(Q) = 4$. 
Moreover $\Pi_0$ contains a component of $\Sigma$, so $Q$ contains no triple points from  $\sing(X) \setminus \Pi_0$ by  
Lemma~\ref{lemm-Roneach}.
\end{proof}

As a result we obtain  Proposition~\ref{prop-epsilon} in the case $Q$ is ruled.
\begin{proposition}\label{prpIrrRuled} If $Q$ is irreducible and ruled, then $\chi(Q)+\epsilon(Q)\leq 20$.
\end{proposition}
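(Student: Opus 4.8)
The plan is to compute an upper bound for $\chi(Q)$ directly from the structure of the singular locus $\Sigma$, as classified in Proposition~\ref{prpClass}, and then combine it with the fact (Lemma~\ref{lemPosTriple}) that a ruled irreducible $Q$ has $r(Q)=0$ and $s(Q)=4$, hence $\epsilon(Q) = s(Q) + 8r(Q) = 4$. Therefore the statement $\chi(Q)+\epsilon(Q) \leq 20$ reduces to proving $\chi(Q) \leq 16$ for every irreducible ruled quartic surface $Q$ with $\Sigma \neq \emptyset$ of the types listed in Proposition~\ref{prpClass}.

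First I would observe that an irreducible ruled quartic surface is birationally a $\Ps^1$-bundle over its base curve; in the cases at hand the generic ruling line meets $\Sigma$, so the normalization $\tilde Q$ maps to $\Ps^1$ (the base of the ruling) with fibers $\Ps^1$, giving $\chi(\tilde Q) = 4$. Then $\chi(Q)$ differs from $\chi(\tilde Q)$ by a correction term supported over $\Sigma$: for each point of $\Sigma$ one gains roughly the Euler characteristic of the curve glued in minus that of its preimage. Concretely, if $\Sigma$ is a line $\ell$ (case~\ref{quartic1}), the normalization identifies two points (or a $2$-to-$1$ cover) over each point of $\ell$, so $\chi(Q) = \chi(\tilde Q) + \chi(\ell) \cdot(\text{local correction})$; a careful bookkeeping over $\ell \cong \Ps^1$ and its finitely many pinch points yields $\chi(Q) \leq 16$. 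The cases where $\Sigma$ is a reducible conic (two lines meeting in a point, case~\ref{quartic2}), three lines with $(Q\cap\Pi_0)_{\mathrm{red}} = \ell_1 \cup \ell_2$ (case~\ref{quartic4}), or a line plus an irreducible conic (case~\ref{quartic5}) are handled the same way, the extra components of $\Sigma$ only increasing the number of identifications and hence only decreasing $\chi(Q)$ further below $16$.

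An alternative, perhaps cleaner route is to use the classification of ruled (quartic) surfaces: the generic such surface is a projection of a rational normal scroll, and its Euler characteristic can be read off from \cite{TopQua} or computed via the formula for projections of scrolls, where the double/triple curve precisely accounts for the drop from $\chi$ of the smooth scroll. Since a smooth rational normal scroll surface has $\chi = 4$ and any projection that creates a positive-dimensional singular locus can only identify points (lowering $\chi$) while the ruling structure contributes a non-negative number of $\Ps^1$'s, one gets $\chi(Q) \leq 4 + (\text{number of components of }\Sigma \text{ that are lines, each a }\Ps^1) \cdot 2 \leq 4 + 3 \cdot 2$ at worst, which is already $\leq 10 < 16$; but one must be careful that conic components and non-generic behavior do not push $\chi$ up, so the honest estimate needs the case analysis above.

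The main obstacle I expect is the precise Euler-characteristic bookkeeping for the normalization map $\tilde Q \to Q$ over the $1$-dimensional singular locus: one needs to know, for each type in Proposition~\ref{prpClass}, whether $\Sigma$ is a double curve (generic transverse self-intersection, contributing $-\chi(\Sigma)$ to the correction) or has worse local behavior (pinch points, triple points from Lemma~\ref{lemm-Roneach} — though here $r(Q)=0$ so no triple points of $X$ occur on $Q$), and to make sure the resulting inequality is uniform. Since $\epsilon(Q) = 4$ is fixed and the target bound $20$ leaves a comfortable margin (we only need $\chi(Q)\le 16$, while the true values are typically much smaller, e.g.\ $\chi \le 4$ or so for honest scrolls), the estimate should be robust; the care is entirely in not losing a sign or a factor. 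In the worst case one simply invokes \cite[Proposition 1.8]{TopQua} together with the fact that $\Sigma \subset \sing(Q)$ and $\deg\Sigma \leq 3$ to bound $\chi(Q) = \chi(\Sigma) + \chi(Q\setminus\Sigma)$ where $\chi(Q\setminus\Sigma)$ is computed on the ruling, yielding the claim.
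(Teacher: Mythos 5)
Your reduction is the right one and coincides with the paper's: by Lemma~\ref{lemPosTriple} one has $r(Q)=0$ and $s(Q)=4$, hence $\epsilon(Q)=4$, so everything comes down to $\chi(Q)\le 16$. But that inequality is precisely the part you have not proved. The normalization bookkeeping is asserted rather than carried out, and the heuristic guiding it --- that identifying points over $\Sigma$ ``can only lower $\chi$'' --- points in the wrong direction. If $\nu\colon\tilde Q\to Q$ is the normalization and $\Sigma$ is a double line $\ell\cong\Ps^1$ of transversal type $A_1$, then $\chi(Q)=\chi(\tilde Q)-\chi(\nu^{-1}(\ell))+\chi(\ell)=\chi(\tilde Q)-2+p$, where $p$ is the number of pinch points: every pinch point \emph{raises} $\chi(Q)$. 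Your ``cleaner route'' estimate $\chi(Q)\le 4+3\cdot2$ therefore has no justification as written; to make the argument honest one must bound the pinch points along each component of $\Sigma$ (at most $4$ per double line, from the degree of the discriminant of the transversal quadratic form), control $\chi(\tilde Q)$, and do this separately in each case of Proposition~\ref{prpClass} --- and you explicitly defer all of this (``careful bookkeeping yields\dots'', ``the honest estimate needs the case analysis above''). That case analysis is the entire content of the proof, so what remains is a genuine gap rather than a routine verification.

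For comparison, the paper closes the gap by a different mechanism. Observation~\ref{obs-ruledirred} forces $\Sigma$ to contain the conic $(Q\cap\Pi_0)_{\red}$, and \cite[Lemma 3.1]{TopQua} excludes an irreducible double conic on a ruled surface; hence for $\deg\Sigma\le 2$ the curve $\Sigma$ is a line or two coplanar lines, and the inequality is delegated to Proposition~\ref{prpLine} in the appendix, whose proof is a detailed study of the pencil $|{\mathcal O}_Q(1)-2\ell|$ with explicit normal forms and counts of singular members. In the remaining cases (\ref{quartic4}) and (\ref{quartic5}) of Proposition~\ref{prpClass} the paper shows that every fiber of the map $Q\setminus\ell\to\Ps^1$ induced by $|{\mathcal O}_Q(1)-2\ell|$ is a union of affine lines of Euler characteristic $1$, whence $\chi(Q)=4$, far below the needed $16$. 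Your fallback suggestion of computing $\chi(Q\setminus\Sigma)$ fiberwise along the ruling is in fact close to this last step, but it is not executed, and for the two-coplanar-lines configuration the required fiber analysis is exactly what Lemma~\ref{lemSingTwoLines} and Proposition~\ref{prpLine} supply.
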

\begin{proof}

Assume that $\deg(\Sigma) \leq 2$. If $\Sigma$ is an irreducible conic then $Q$ is nonruled by  \cite[Lemma 3.1]{TopQua}. Thus the claim follows from Proposition~\ref{prpLine}
(see Appendix~\ref{appPen}).

It remains to consider the cases  (\ref{quartic4}), (\ref{quartic5}) of Proposition~\ref{prpClass}.
Moreover, by Lemma~\ref{lemPosTriple}  we have $\epsilon(Q)=4$, so it suffices to show that $\chi(Q) \leq 16$.
Consider now the pencil of 
 conics $C_{\Pi} \in  |{\mathcal O}_Q(1) - 2\ell|$. In the case  (\ref{quartic5}), if the plane $\Pi$ is not tangent to the conic $K$ in the point $K \cap \ell$ 
(resp. in the case (\ref{quartic4}) if  $\Pi$ does not contain the line  $\ell_2$),
then the conic $C_{\Pi} := Q \cap \Pi - 2\ell$ meets $K$ in a point away from $\ell$, so it is singular and therefore reducible. This implies that the generic fiber of the map $Q\setminus \ell \to \Ps^1$ is the union of two (affine)  lines (by generic smoothness) and degenerates in a double line, or a line together with $\ell$. This means that the Euler characteristic of the general fiber and of the special fiber is $1$. In particular $\chi(Q\setminus \ell)=2$ and $\chi(Q)=4$. 
\end{proof}

Now we can deal with the non-ruled case. 
\begin{proposition} \label{prpIrrNonRuled} Suppose $Q$ is irreducible, $\Sigma \neq \emptyset$ and $Q$ is not ruled. Then $\chi(Q)+\epsilon(Q) \leq 20$.

\end{proposition}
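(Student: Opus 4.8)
The plan is to run the argument case by case according to the type of $\Sigma$, reducing everything to an Euler–characteristic count on pencils of plane quartic curves. Since $Q$ is irreducible, is not ruled and $\Sigma\neq\emptyset$, Proposition~\ref{prpClass} leaves only cases \ref{quartic1} and \ref{quartic2}: either $\Sigma$ is a line $\ell$, or $\Sigma$ is a conic $K$ which is either irreducible or a pair of lines meeting in a point (two skew lines would force $Q$ ruled). In every case $s(Q)\leq 4$. Moreover Lemma~\ref{lemm-Roneach} controls $r(Q)$: if $\Sigma$ is an irreducible conic then $r(Q)=0$; if $\Sigma$ is a pair of intersecting lines, any triple point of $X$ on $Q\setminus\Pi_0$ must be their common point, so $r(Q)\leq 1$; and if $\Sigma$ is a line, every such point lies on $\ell$, so $r(Q)\leq 2$ by Proposition~\ref{prpBasic}.\ref{p31-two} and Remark~\ref{rem-notriple-line}. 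Hence it is enough to bound $\chi(Q)$ from above, the bound depending only on the short list of possibilities for $(\Sigma, r(Q))$.

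The uniform tool is a pencil of plane sections of $Q$. If $\Sigma$ contains a line $L$, I would take the pencil $\{\Pi_t\}_{t\in\Ps^1}$ of planes through $L$, so that $Q\cap\Pi_t=2L+C_t$ with $C_t$ a residual conic; as $Q$ is not ruled, a general $C_t$ is a smooth conic meeting $L$ in two points (otherwise the residual lines sweep out $Q$), so the general fibre of $Q\setminus L\to\Ps^1$ is a $\C^{\ast}$ and
\[
\chi(Q)=\chi(L)+\chi(Q\setminus L)=2+\sum_{t}\chi\bigl((Q\cap\Pi_t)\setminus L\bigr),
\]
the sum running over the finitely many $t$ with degenerate fibre. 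If instead $\Sigma=K$ is an irreducible conic (so $r(Q)=0$), I would use the pencil of planes through a general line $m$ disjoint from $K$ and from the isolated singular points of $Q$; then a general $Q\cap\Pi_t$ is an irreducible plane quartic curve with exactly two nodes, located on $K$, hence of geometric genus $1$, and
\[
\chi(Q)=-2\,\chi(\Ps^1)+\sum_{t}\bigl(\chi(Q\cap\Pi_t)+2\bigr)=-4+\sum_{t}\bigl(\chi(Q\cap\Pi_t)+2\bigr).
\]
In both set-ups the degenerate fibres are plane quartic curves of a restricted shape (containing $2L$, respectively having the two fixed nodes on $K$ collide or worsen), so the number of such fibres and the size of each defect $\chi(Q\cap\Pi_t)-\chi(\text{general fibre})$ are bounded by the results on pencils of plane quartics in Appendix~\ref{appPen}; this is the same mechanism already invoked through Proposition~\ref{prpLine} in the proof of Proposition~\ref{prpIrrRuled}. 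One must also account for the four double points $P_1,\dots,P_4$ of $Q$: each $P_j$ not on the chosen line $L$ (resp. $m$) forces one reducible-conic fibre, and contributes to $s(Q)$ exactly when it fails to be a node, while Proposition~\ref{prpBasic} (no three triple points collinear, at most four coplanar) limits how these fibres coincide. Putting the pieces together gives $\chi(Q)+s(Q)+8r(Q)\leq 20$ whenever $r(Q)=0$, with room to spare.

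The main obstacle is the case $r(Q)\geq 1$, where $Q$ carries a triple point $R$ of $X$ off $\Pi_0$. By Lemma~\ref{lemm-Roneach} this pins $\Sigma$ down to a line or a pair of intersecting lines through $R$, and the point is to show that the (already very special) quartic curve appearing in the fibre through $R$ is so degenerate that $\chi(Q)$ drops by at least $8$ — and by $16$ when $r(Q)=2$, which must therefore be ruled out entirely. This is where the detailed classification of degenerate quartic fibres from Appendix~\ref{appPen}, together with the local structure of an ordinary triple point (tangent cone a cone over a smooth cubic surface), has to be used to the full; once that elementary but lengthy bookkeeping is complete, substituting back into $\chi(Q)=2+\sum_t\chi(\cdots)$ (or the genus-one version) and the bounds $s(Q)\leq 4$, $r(Q)\leq 2$ yields $\chi(Q)+\epsilon(Q)\leq 20$.
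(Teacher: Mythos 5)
Your reduction to cases (\ref{quartic1}) and (\ref{quartic2}) of Proposition~\ref{prpClass} is correct, and for $\Sigma$ a line or two intersecting coplanar lines your argument is essentially a re-description of Proposition~\ref{prpLine}, which you could simply cite: it already contains, in full, the $r(Q)=1,2$ bookkeeping that you postpone at the end as ``the main obstacle''. The genuine gap is in the case where $\Sigma$ is an irreducible conic $K$. There you propose a pencil of plane sections through a general line $m$ disjoint from $K$ and assert that the number and Euler defect of the degenerate fibres ``are bounded by the results on pencils of plane quartics in Appendix~\ref{appPen}''. But Appendix~\ref{appPen} contains no such results: every statement there (Lemmata~\ref{lemSingLine},~\ref{lemSingTwoLines}, Proposition~\ref{prpLine}) concerns the pencil of \emph{conics} residual to a double line of $Q$, not a pencil of two-nodal irreducible plane quartic sections. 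To make your route work you would have to prove from scratch a bound on the number of planes $\Pi_t$ with degenerate $Q\cap\Pi_t$ and on the defect of each such fibre, and also control the transversal singularity type of $Q$ along $K$ (if it is worse than $A_1$ the general section is not two-nodal of genus one, and your count changes); none of this is routine bookkeeping that can be delegated to the appendix. Your Euler-characteristic formula is also off: the pencil through $m$ has base locus $m\cap Q$, four smooth points for general $m$, so the correct relation is $\chi(Q)=-8+\sum_t\bigl(\chi(Q\cap\Pi_t)+2\bigr)$ rather than $-4+\sum_t(\cdots)$.

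For comparison, the paper disposes of the irreducible-conic case by a completely different and much shorter device: by \cite[Theorem 8.6.4]{DolCAG} such a $Q$ is the projection of a quartic surface $S\subset\Ps^4$ from a point, giving a birational morphism $S\to Q$ that is an isomorphism off a quartic curve $C\subset S$ lying over $K$; Lemma~\ref{lemm-Roneach} gives $r(Q)=0$ and $\epsilon(Q)=s(Q)$, and then $\chi(C)\geq -4$, $\chi(S)\leq 6+s$ and $\chi(Q)=\chi(S)-\chi(C)+\chi(K)\leq 12+s$ yield $\chi(Q)+\epsilon\leq 12+2s\leq 20$. You would need either to adopt such a complete argument for the conic case or to actually carry out the degenerate-fibre analysis for your pencil; as written, that step is missing.
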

\begin{proof} By Proposition~\ref{prpClass} the curve  $\Sigma$ is planar and $\deg(\Sigma) \leq 2$.
Moreover, if $\Sigma$ contains a line, the claim follows from Proposition~\ref{prpLine}
(see Appendix~\ref{appPen}).

Suppose now that  $\Sigma$ is an irreducible conic $K$.
Then by \cite[Theorem 8.6.4]{DolCAG} we have that $Q$ is the projection of quartic surface in $\Ps^4$ from a point outside this surface. 
In this case we have a birational morphism $S\to Q$, which  induces  an isomorphism $S\setminus C \cong Q\setminus K$, for some quartic curve $C\subset S$.

Lemma~\ref{lemm-Roneach} yields  $r=0$ and the equality $\epsilon(Q)=s(Q)$.
 The surface $Q$ has at least $4-s$ double points away from $K$ and hence $S$ has at least $4-s$ double points. Since $C$ is a quartic curve we find $\chi(C)\geq -4$ and hence $\chi(S)\leq 10-(4-s)=6+s$. Now $\chi(Q)=\chi(S)-\chi(C)+\chi(K)\leq 6+s+4+2=12+s$. Since $\epsilon=s$ we find that $\chi(Q)+\epsilon \leq 12+2s\leq 20$. 
\end{proof}

\subsection{$Q$ is reducible}

Suppose $Q$ is reducible. By Proposition~\ref{prpBasic} the quartic $Q$ is reduced and has at most three irreducible components. Hence $Q$ is one of the following
\begin{enumerate}
\item the union of a plane and a cubic surface,
\item the union of two planes and an irreducible quadric,
\item the union of two quadrics.
\end{enumerate}
Below we analyze all cases in a series of lemmata. 
\begin{lemma}\label{lemCubSur} Suppose $Q(H)$ is the union of a plane $\Pi$ and a cubic surface $S$. Then $S$ has isolated singularities and $\chi(Q)+\epsilon(Q) \leq 24$.
\end{lemma}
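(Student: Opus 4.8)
The plan is to combine the incidence data of the triple points of $X$ in $H$ with an Euler-characteristic bookkeeping for $Q=\Pi\cup S$. First I would fix the combinatorics. Since $X\cap H$ is reduced (Proposition~\ref{prpBasic}.\ref{p31-eight}), $\Pi_0$ is not a component of $Q$, so $\Pi\neq\Pi_0$ and these two planes of $X$ meet along a line $\ell:=\Pi\cap\Pi_0$; moreover $S$ is irreducible, otherwise $Q$ would have three or more components. By Proposition~\ref{prpBasic}.\ref{p31-four} each of $\Pi$, $\Pi_0$ carries exactly four triple points, and Observation~\ref{obs-useful}(1), applied to the two planes $\Pi_0,\Pi$ of $H\cap X$, shows that $H$ contains exactly six triple points $P_1,\dots,P_6$ of $X$, exactly two of which — say $P_1,P_2$ — lie on $\ell$. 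Since $\ell$ carries no further triple point, the remaining two points of $\sing(X)\cap\Pi_0$, say $P_3,P_4$, lie on $\Pi_0\setminus\ell$, and the two remaining triple points in $H$, namely $P_5,P_6$, lie on $\Pi\setminus\ell$; in particular $r(Q)=2$.

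Next I would read off the local behaviour of $Q$ at each $P_i$. At an ordinary triple point $P$ of $X$ lying on $H$, the tangent cone of $X\cap H$ at $P$ is the hyperplane section of the cone $C_PX$ over a \emph{smooth} cubic surface, hence a cone over a \emph{reduced} plane cubic curve, and it decomposes as the union of the tangent cones at $P$ of the components of $X\cap H$ through $P$. For $P_1,P_2$ (which lie on $\Pi_0$, on $\Pi$, and — by multiplicity bookkeeping against this reduced cubic — on $S$ with multiplicity one) this forces $S$ to pass smoothly through $P_1,P_2$ transversally to $\Pi$, so $P_1,P_2$ are nodes of $Q$ and hence $s(Q)\le 2$. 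For $P_3,P_4$ (which lie only on $\Pi_0$ and on $S$) and for $P_5,P_6$ (which lie only on $\Pi$ and on $S$), the vanishing of the second partials of $\FX$ at a triple point shows that all four are double points of $S$; and the reducedness of the tangent cone prevents $\Pi$ from being a component of the tangent cone of $S$ at $P_5$ or $P_6$, so $P_5,P_6$ are genuine singular points of the plane curve $\Pi\cap S$.

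To obtain that $S$ has isolated singularities, suppose not. Passing to a general plane section: if $\sing(S)$ had a component of degree $\geq 2$ then, for general $\Pi'$, the plane cubic $S\cap\Pi'$ would have two distinct singular points and hence be reducible; likewise two distinct lines in $\sing(S)$ would make $S\cap\Pi'$ reducible for general $\Pi'$. Either way $S$ would be reducible, a contradiction; so $\sing(S)$ would be a single line $L$, which then contains every singular point of $S$. But $P_3,P_4,P_5,P_6$ are double points of $S$, so four triple points of $X$ would lie on $L$, contradicting Proposition~\ref{prpBasic}.\ref{p31-two}. Hence $S$ has only isolated singularities.

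Finally I would assemble the estimate. Being an irreducible cubic with only isolated singularities and with the four double points $P_3,\dots,P_6$, the surface $S$ satisfies $\chi(S)=9-\sum_{R\in\sing(S)}\mu(R)\le 5$. The plane cubic $\Pi\cap S$ is singular at the two distinct points $P_5,P_6$, hence is reducible, and any reducible plane cubic singular at two distinct points (a line together with a transverse conic, or three lines in general position) has $\chi\ge 2$. Therefore, by additivity of the Euler characteristic,
\[ \chi(Q)+\epsilon(Q)=\bigl(\chi(\Pi)+\chi(S)-\chi(\Pi\cap S)\bigr)+\bigl(s(Q)+8r(Q)\bigr)\le(3+5-2)+(2+16)=24. \]
The step I expect to be the real work is the local bookkeeping at the $P_i$ — establishing cleanly that $P_3,\dots,P_6$ are double points of $S$ and that $P_5,P_6$ survive as honest singular points of the curve $\Pi\cap S$, which both rest on the reducedness of the tangent cone of an ordinary triple point — together with making the "general plane section" argument for $\sing(S)$ fully rigorous.
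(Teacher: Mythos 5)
Your route is the same as the paper's (same distribution of the six triple points over $\Pi_0$, $\Pi$ and $S$, same multiplicity count making $P_3,\dots,P_6$ double points of $S$, same collinearity argument against non-isolated singularities, same Euler-characteristic assembly), but there is a genuine error in the bookkeeping of $s(Q)$, precisely at the step you yourself flag as the real work. You assert that $P_1,P_2$ \emph{are} nodes of $Q$ because $S$ passes smoothly and transversally through them. This is backwards: near $P_1$ the quartic $Q=\Pi\cup S$ is the union of two smooth sheets crossing transversally, i.e.\ locally $V(xy)\subset\C^3$, which is singular along the whole curve $\Pi\cap S$. That is not an isolated $A_1$-point, so under Notation~\ref{notStd} (where ``node'' means an isolated $A_1$-singularity, cf.\ Proposition~\ref{prop-fibers}.1) the points $P_1,P_2$ \emph{do} count towards $s(Q)$; the paper's proof records exactly this, obtaining $s(Q)=2$ from $P_1,P_2\in\Pi\cap S$. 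The misidentification is not cosmetic: the count $\sum_{R}s(\pi^{-1}(R))=8(t-4)+20$ in the proof of Proposition~\ref{corPlanes} requires $P_1,P_2$ to be registered as non-nodes of this fiber (the tangent cone of $Q$ at $P_1$ is a union of two planes, i.e.\ a cone over a reducible conic, so this $H$ is one of the five special hyperplanes of Proposition~\ref{prop-fibers}.2).

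Consequently your derivation of $s(Q)\le 2$ does not go through as written: once $P_1,P_2$ are correctly counted as non-nodes, you must additionally show that $P_3,P_4$ \emph{are} nodes, i.e.\ isolated $A_1$-points of $S$, which your argument never addresses (a priori an isolated double point of a cubic surface could be $A_2$ or worse, making $s(Q)=3$ or $4$ and $\epsilon(Q)$ up to $20$). The paper settles this by invoking the classification of cubic surfaces: an irreducible cubic with four isolated double points has four $A_1$-points, whence $s(Q)=2$ and $\chi(S)=5$. (One could instead avoid the classification by noting that each $P_i$, $i=3,4$, failing to be $A_1$ adds $1$ to $s(Q)$ but at least $1$ to $\sum\mu(R)$, hence lowers your bound on $\chi(S)$ by at least $1$, so the total $\chi(Q)+\epsilon(Q)\le 24$ survives --- but that compensation must be made explicit.) The remaining ingredients --- reducing non-isolated singularities to a single line and contradicting Proposition~\ref{prpBasic}.\ref{p31-two}, the bound $\chi(S)\le 9-\sum\mu\le 5$, and $\chi(\Pi\cap S)\ge 2$ for a plane cubic singular at two distinct points --- are correct and parallel the paper.
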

\begin{proof} By Observation~\ref{obs-useful} the hyperplane $H$ contains six triple points of $X$, so we can assume that  the triple points $P_1,P_2,P_5,P_6$ belong to $\Pi$.
Thus the cubic surface $S$ has four singular points in $P_j$,  $j \geq 3$. Moreover, we have $P_1$, $P_2$ $\in$ $S$.  If $S$ had  nonisolated singularities then $\sing(S)$ would be  a double line, hence this is impossible by Proposition~\ref{prpBasic}.\ref{p31-two}. Thus  $S$ has four isolated singularities. By \cite{Bruce} the points $P_3$, $\ldots$, $P_6$  are nodes on $S$  and we have $\chi(S) = 5$. 

The intersection $S\cap \Pi$ is a cubic curve with double points $P_5$, $P_6$, so it contains the line $\langle P_5, P_6 \rangle$. The residual conic contains the four triple points of $X$, so  
 the cubic curve is either the union of three non concurrent lines or the  line  $\langle P_5, P_6 \rangle$  and an (irreducible) conic. In the first case $\chi(S\cap \Pi) = 3$ and 
in the second case we have $\chi(S\cap \Pi) = 2$ (observe that the line is not tangent to the conic).
We obtain $\chi(Q)=\chi(S)+\chi(\Pi)-\chi(S\cap \Pi) \in \{5, 6\}$.

Finally the surface $Q$ contains  6 triple points of $X$, so  $r(Q)=2$. Moreover, we have $P_1$, $P_2$ $ \in  \Pi \cap S$, whereas  $P_3$, $P_4$ are $A_1$-points on $Q$. 
Therefore $s(Q)=2$ and $\epsilon(Q) =18$, which yields $\chi(Q)+\epsilon(Q)  \in \{23, 24\}$.
\end{proof}

\begin{lemma}\label{lemTwoPlanes} Suppose $Q(H)$ is the union of two planes $\Pi_1,\Pi_2$ and a quadric surface $S$.
Then $\chi(Q)+\epsilon(Q) \leq 26$.
\end{lemma}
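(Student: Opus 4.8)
The plan is to bound $\chi(Q)$ and $\epsilon(Q)$ separately, establishing $r(Q)=2$, $s(Q)=4$ (so $\epsilon(Q)=20$) and $\chi(Q)\le 6$, and then adding.

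First I would pin down the configuration of the triple points. Since $X\cap H$ contains the three distinct $2$-planes $\Pi_0,\Pi_1,\Pi_2$, Observation~\ref{obs-useful}(2) together with Proposition~\ref{prpBasic}.\ref{p31-six} shows that $H$ carries exactly six triple points of $X$, splitting into three \emph{disjoint} pairs lying on $\Pi_0\cap\Pi_1$, on $\Pi_0\cap\Pi_2$, and on $\Pi_1\cap\Pi_2$ respectively. Relabelling so that $P_1,P_2\in\Pi_0\cap\Pi_1$ and $P_3,P_4\in\Pi_0\cap\Pi_2$, the two remaining triple points lie on $\Pi_1\cap\Pi_2$ and, being distinct from $P_1,\dots,P_4=\sing(X)\cap\Pi_0$, they do not lie on $\Pi_0$; hence $r(Q)=2$. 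To get $s(Q)=4$, observe that $\Pi_0\cap Q=(\Pi_0\cap\Pi_1)\cup(\Pi_0\cap\Pi_2)\cup(\Pi_0\cap S)$ is a plane quartic curve which by \eqref{eq-always-double} is singular at each $P_j$, $j\le 4$; since $P_1$ meets only the component $\Pi_0\cap\Pi_1$ among the two lines, it must also lie on the conic $\Pi_0\cap S$, so $P_j\in S$ for all $j\le 4$. Consequently $Q$ has two distinct local components at $P_1$ (namely $\Pi_1$ and $S$), and likewise at $P_2,P_3,P_4$, so $Q$ is not locally irreducible there; in particular none of the $P_j$ is an $A_1$-point of $Q$, giving $s(Q)=4$ and $\epsilon(Q)=s(Q)+8r(Q)=20$. (For the stated inequality only $r(Q)\le 2$ and $s(Q)\le 4$ are needed.)

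It then remains to prove $\chi(Q)\le 6$. Using that the topological Euler characteristic is additive on complex algebraic varieties, inclusion--exclusion over the three components $\Pi_1,\Pi_2,S$ gives
\[
\chi(Q)=\chi(\Pi_1)+\chi(\Pi_2)+\chi(S)-\chi(\Pi_1\cap\Pi_2)-\chi(\Pi_1\cap S)-\chi(\Pi_2\cap S)+\chi(\Pi_1\cap\Pi_2\cap S).
\]
Here $\chi(\Pi_i)=3$; $\chi(S)\le 4$ (with equality iff $S$ is smooth, and $\chi(S)=3$ if $S$ is a quadric cone); $\Pi_1\cap\Pi_2$ is a line, so $\chi(\Pi_1\cap\Pi_2)=2$; each $\Pi_i\cap S$ is a genuine conic, since an irreducible quadric surface contains no plane, hence $\chi(\Pi_i\cap S)\ge 2$ whether it is a smooth conic, a pair of distinct lines, or a double line; and $\Pi_1\cap\Pi_2\cap S$ is the intersection of the line $\Pi_1\cap\Pi_2$ with $S$, hence $\chi(\Pi_1\cap\Pi_2\cap S)\le 2$ (two points, one point, or --- when that line lies on $S$ --- the line itself). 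Substituting these estimates yields $\chi(Q)\le 3+3+4-2-2-2+2=6$, and therefore $\chi(Q)+\epsilon(Q)\le 26$.

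The main (though still elementary) obstacle is the book-keeping of the degenerate positions of the quadric sections: tangent planes to $S$, planes through the vertex of a quadric cone, and the case where the line $\Pi_1\cap\Pi_2$ lies on $S$; in each of these one must confirm that the two inequalities $\chi(\Pi_i\cap S)\ge 2$ and $\chi(\Pi_1\cap\Pi_2\cap S)\le 2$ persist, which is where I expect the effort to go. The other small argument is the one given above showing each $P_j$ ($j\le 4$) actually lies on $S$, so that $s(Q)=4$; this follows cleanly from \eqref{eq-always-double}.
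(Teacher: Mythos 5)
Your proof is correct and follows essentially the same route as the paper: both establish $r(Q)=2$, $s(Q)=4$ from Observation~\ref{obs-useful} and the fact that $P_1,\dots,P_4$ lie on $S$, and both bound $\chi(Q)\le 6$ by additivity of the Euler characteristic over the components $\Pi_1,\Pi_2,S$ (the paper does the inclusion--exclusion in two stages with a case split on whether $\langle P_5,P_6\rangle\subset S$, whereas you do a single three-fold inclusion--exclusion with termwise inequalities, which is marginally cleaner). The only point to make explicit is that $S$ is irreducible, which you use when asserting $\chi(S)\le 4$ and $\chi(\Pi_i\cap S)\ge 2$; this follows from Proposition~\ref{prpBasic}.\ref{p31-eight}, since otherwise $X\cap H$ would contain four $2$-planes.
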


\begin{proof}
Since $X \cap H$ contains the three $2$-planes  $\Pi_0$, $\Pi_1,\Pi_2$, the residual quadric surface $S$ is irreducible by Proposition~\ref{prpBasic}.\ref{p31-eight}. Moreover,  by Observation~\ref{obs-useful},  we can assume 
that the triple points $P_1,P_2$,$P_5,P_6$ belong to $\Pi_1$ and $P_3$,$P_4$,$P_5$,$P_6$ are on the $2$-plane $\Pi_2$.
Hence $\Pi_0\cup \Pi_1\cup \Pi_2$ has a double point at each $P_i$,  and  $P_i \in S$ for  $i \leq 6$.

Suppose that the quadric $S$ does not contain the line $\langle P_5, P_6 \rangle$. 
Obviously,  the union $\Pi_1\cup \Pi_2$ has Euler characteristic $3+3-2=4$. 
Let $K_i = S \cap \Pi_i$ for $i=1,2$. Recall that both conics are reduced. By assumption $K_1 \cap K_2$ consists of the two points $P_5, P_6$, so
$$
\chi(S \cap (\Pi_1\cup \Pi_2)) = \chi(K_1) + \chi(K_2)  - 2.
$$
Accordingly, we obtain $\chi(Q) =     6 + \chi(S) - (\chi(K_1) + \chi(K_2)) \in \{3, 4,5,6\}$ depending on the number of singular conics $K_i$, because $\chi(S) \in \{ 3,4 \}$. \\
If $S$ contains $\langle P_5, P_6 \rangle$, then  $\chi(S \cap (\Pi_1\cup \Pi_2)) = 4$ and  $\chi(Q) = \chi(S) \in \{3,4\}$. 

In both cases we have 2 points in $Q \cap (\sing(Q) \setminus \Pi_0)$, so $r(Q)=2$. Since each $P_1$, $\ldots$, $P_4$  lies on the intersection of $S$ with a plane $\Pi_i$, $ i \neq 0$, we get $s(Q)=4$ and $\epsilon(Q) =20$.
Altogether we arrive at $\chi(Q)+\epsilon(Q) \in \{23, 24, 25,  26\}$ as claimed.
\end{proof}

\begin{lemma} \label{lemQuaSur} If $Q(H)$ is the union of two irreducible quadrics $Q_1$, $Q_2$, then we have $\chi(Q)+\epsilon(Q) = 23$.
\end{lemma}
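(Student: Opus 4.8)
The plan is to show that $Q$ is a union of two quadric \emph{cones}, the vertex of each of which lies on the other, and then to read off $\chi(Q)$, $s(Q)$ and $r(Q)$; throughout I would use only Proposition~\ref{prpBasic}.

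First I would locate the six triple points on $H$. By Proposition~\ref{prpBasic}.\ref{p31-seven} each of $Q_1$, $Q_2$ contains exactly six triple points of $X$; these all lie on $H$, so Proposition~\ref{prpBasic}.\ref{p31-six} forces $H$ to contain exactly six triple points $P_1,\dots,P_6$, all of them on $Q_1\cap Q_2$. By Proposition~\ref{prpBasic}.\ref{p31-four} exactly four of these lie on $\Pi_0$, so after relabelling $P_1,\dots,P_4\in\Pi_0$ and $P_5,P_6\notin\Pi_0$; in particular $r(Q)=2$. Writing $Q_i=V(q_i)$, the points $P_5$, $P_6$ are triple points of $Q=V(q_1q_2)$, hence $\ord_{P_5}(q_1)+\ord_{P_5}(q_2)=3$ and likewise at $P_6$; since a point has multiplicity at most two on a quadric, each of $P_5$, $P_6$ is the vertex of one of the cones and lies on the other. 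A quadric cone has a unique vertex and $P_5\neq P_6$, so after relabelling $Q_1$ is a quadric cone with vertex $P_5\in Q_2$ and $Q_2$ is a quadric cone with vertex $P_6\in Q_1$. For $j\le 4$ the point $P_j$ is then a smooth point of both quadrics (their vertices lie off $\Pi_0$) lying on the curve $Q_1\cap Q_2$, hence a non-isolated singularity of $Q$, so it is not a node; thus $s(Q)=4$ and $\epsilon(Q)=s(Q)+8r(Q)=20$.

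It remains to prove $\chi(Q)=3$. By inclusion--exclusion for the Euler characteristic, together with the fact that a quadric cone has Euler number $3$, one has $\chi(Q)=\chi(Q_1)+\chi(Q_2)-\chi(C)=6-\chi(C)$, where $C:=(Q_1\cap Q_2)_{\mathrm{red}}$, so it suffices to show $\chi(C)=3$. The line $\ell:=\langle P_5,P_6\rangle$ joins the vertex $P_5$ of $Q_1$ to the point $P_6\in Q_1$, so it is a ruling of $Q_1$, and symmetrically of $Q_2$; hence $\ell\subseteq C$. On the other hand $P_1,\dots,P_4$ and the point $\ell\cap\Pi_0$ are five distinct points of $\Pi_0\cap(Q_1\cap Q_2)$ — distinct since no three triple points are collinear, so $\ell\cap\Pi_0\notin\{P_1,\dots,P_4\}$ — while $Q_1\cap Q_2$ has degree $4$; therefore $\Pi_0$ contains a one-dimensional component of $Q_1\cap Q_2$, which must lie in $Q_1\cap\Pi_0$ and in $Q_2\cap\Pi_0$. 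Consequently these two plane conics coincide; denote the common conic by $K$, which is smooth because $\Pi_0$ avoids $P_5$, $P_6$, and which is a component of $C$. Since $\ell$ and $K$ account for degree $3$ of the degree-$4$ curve $Q_1\cap Q_2$, either $Q_1$ and $Q_2$ are tangent along $\ell$ and $C=\ell\cup K$, or $C=\ell\cup K\cup m$ for a further line $m$. In the first case connectedness of $C$ gives $|\ell\cap K|=1$ and $\chi(C)=2+2-1=3$. In the second case $m\not\subseteq\Pi_0$ (otherwise $m\subseteq Q_1\cap\Pi_0=K$), and of course $\ell\not\subseteq\Pi_0$, so each of $\ell\cap K$, $m\cap K$ and $\ell\cap m$ is at most one point; moreover no point lies on all three components, since such a point would lie on $K\subseteq\Pi_0$ and so be a smooth point of both $Q_1$ and $Q_2$, where $Q_1\cap Q_2$ cannot acquire three analytic branches. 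As $C$ is connected of arithmetic genus $1$, its three rational components must then meet pairwise in exactly one point, yielding three distinct nodes, so $\chi(C)=2+2+2-3=3$. In either case $\chi(Q)=3$ and $\chi(Q)+\epsilon(Q)=23$.

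The step I expect to be the main obstacle is the identification of $C=(Q_1\cap Q_2)_{\mathrm{red}}$: first seeing that $\Pi_0$ forces $Q_1$ and $Q_2$ to meet it in the same conic, and then ruling out the degenerate incidences among the components of $C$ (most importantly, the three components through a common point) which would change $\chi(C)$. The rest is Bézout and additivity of the Euler characteristic.
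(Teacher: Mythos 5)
Your proof is correct and follows essentially the same route as the paper's: identify $Q_1$, $Q_2$ as cones with vertices at the two triple points off $\Pi_0$, show they cut $\Pi_0$ in one and the same smooth conic $K$, and compute $\chi(Q)$ by inclusion--exclusion with $r(Q)=2$, $s(Q)=4$. The only divergence is at the end: the paper notes that, being cones over the same conic $K$, the two quadrics have the same tangent plane at every point of the ruling $\ell=\langle P_5,P_6\rangle$ (namely the plane spanned by $\ell$ and the tangent line to $K$ at $\ell\cap\Pi_0$), so $Q_1\cap Q_2=K+2\ell$ and your second case with the extra line $m$ never occurs --- which also spares you the slightly hand-waved ``no three analytic branches'' step (in any event $m$, being a line on a cone, would pass through the vertex, forcing $m=\ell$ if it also met $\ell\cap K$).
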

\begin{proof}
By Proposition~\ref{prpBasic}  the hyperplane $H$ (resp. the quadric  $Q_1$, resp. $Q_2$) contains six triple points of $X$. After relabelling the points,  we can assume that $Q(H)$ has triple points $P_1$, $\ldots$, $P_6$.
Since none of the points $P_5$, $P_6$ are on $\Pi_0$, each of them is double point of one of the quadrics $Q_i$.  Thus we can assume that $Q_1$ (resp. $Q_2$) is a  cone with vertex  $P_5$ (resp. $P_6$).

Let $K_i := Q_i \cap \Pi_0$. We claim that the conic $K_i$ is smooth for $i=1,2$. Indeed, we have $P_1,\dots, P_4 \in K_i$. Thus $K_i$ cannot be union of two lines by  Proposition~\ref{prpBasic}.\ref{p31-two}
(recall that $P_{4+i}$ is the vertex of $Q_i$). 

We claim that $K_1=K_2$ and the cones $Q_1$, $Q_2$ touch along the line $\langle P_5, P_6 \rangle$. 
Indeed, let $R$ be the point in  $\langle P_5, P_6 \rangle \cap \Pi_0$. By Proposition~\ref{prpBasic}.\ref{p31-two} the point $R$ is none of the $P_i$'s.
On the other hand, we have   
$\langle P_5, P_6 \rangle \subset Q_1 \cap Q_2$, so $R \in K_1 \cap K_2$. Consequently 
the smooth conics $K_1$, $K_2$  have five intersection points and they coincide. Thus in every point on  $\langle P_5, P_6 \rangle$ 
the tangent spaces of both cones coincide.

Finally, we have two quadric cones ($\chi(Q_1) + \chi(Q_2)=6$) intersecting in the smooth conic $K_1$ and the  double line $2 \langle P_5, P_6 \rangle$.
Since   $K_1$ meets $ \langle P_5, P_6 \rangle$ in exactly one point, we have  $\chi(Q_1 \cap Q_2)=3$ and  $\chi(Q)=3$.  Moreover, $Q(H)$ is singular along the conic $K_1$, so 
$s(Q) = 4$. Since $r(Q) = 2$  we have $\epsilon(Q) =  20$ in this case and the proof is complete.
\end{proof}

\subsection{Proof of Proposition~\ref{prop-epsilon}}
After those preparations we can prove Proposition~\ref{prop-epsilon}.
\begin{proof}[Proof of Proposition~\ref{prop-epsilon}]
If  $\chi(Q)+\epsilon(Q) > 20$, then  $Q$ is reducible by Proposition~\ref{prpIrrRuled} and Proposition~\ref{prpIrrNonRuled}. 
Proposition~\ref{prpBasic}.\ref{p31-eight} implies that we are in one of the cases covered by 
Lemmata~\ref{lemCubSur},~\ref{lemTwoPlanes} and~\ref{lemQuaSur}. This finishes the proof.
\end{proof}

\appendix

\section{Pencils of conics on quartic surfaces with a double line}\label{appPen}
In this section we assume that  $Q\subset \Ps^3$ is  an irreducible  quartic surface and put  $\Sigma$ to denote the union of $1$-dimensional components of the singular locus $\sing(Q)$. Moreover, we   
assume that
\begin{equation} \label{eq-ass} 
\Sigma \mbox{ contains a line } \ell \mbox{ of double points.}
\end{equation}

In the sequel the following lemma will play crucial role.
\begin{lemma} \label{lem-atworstA3}
If  $Q$ has two singular points that are not coplanar with the line $\ell \subset \Sigma$,
then  the transversal type of $Q$ along the line $\ell$  is $A_k$ with $k \leq 3$.
\end{lemma}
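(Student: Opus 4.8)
The plan is to argue by contradiction: assume the transversal type of $Q$ along $\ell$ is $A_k$ with $k\ge 4$ and deduce that $\sing(Q)\setminus\ell$ is contained in a single plane through $\ell$ (or is empty), contradicting the hypothesis. First I would choose homogeneous coordinates so that $\ell = V(x_0,x_1)$ and write the defining quartic $F$ of $Q$ as $F = F^{(2)}+F^{(3)}+F^{(4)}$, where $F^{(j)}$ is the part of $F$ of degree exactly $j$ in $x_0,x_1$ (so $F^{(j)}$ has degree $4-j$ in $x_2,x_3$, and $F^{(0)}=F^{(1)}=0$ since $\ell$ is a line of double points). The transversal type of $Q$ at a general point $(0:0:s:u)$ of $\ell$ is read off from the restriction of $F$ to a transversal slice: after completing the square in the first transversal variable it equals $A_m$ with $m+1$ the order, in the second transversal variable, of the resulting residual. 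A short computation then shows that the transversal type is $\ge A_2$ (for a general point of $\ell$) exactly when the discriminant of $F^{(2)}$, regarded as a binary quadratic form in $x_0,x_1$ over $\C[x_2,x_3]$, vanishes identically; by unique factorisation this forces $F^{(2)}$, after a linear change of $x_0,x_1$, into one of the two normal forms
\[
\text{(I)}\quad F^{(2)}=A(x_2,x_3)\,x_0^2,\qquad\qquad
\text{(II)}\quad F^{(2)}=G^2,\ \ G=A'x_0+D'x_1,
\]
where $A$ is a binary quadratic and, in case (II), $A',D'$ are linearly independent linear forms in $x_2,x_3$ (if they were dependent, $G$ would be a product of a linear form in $x_2,x_3$ with one in $x_0,x_1$, and after a change of coordinates one is back in case (I)).

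In case (II) I would push the slice computation two steps further. Imposing transversal type $\ge A_3$ should force, using the coprimality of $A'$ and $D'$, that $F^{(3)} = P\cdot G$ for a binary quadratic $P\in\C[x_0,x_1]$; imposing $\ge A_4$ should then force $4F^{(4)} = P^2$, so that $F = G^2+PG+\tfrac14 P^2 = \bigl(G+\tfrac12 P\bigr)^2$ is a perfect square. This contradicts the irreducibility of $Q$, so in case (II) one already gets $k\le 3$ without using the hypothesis on the singular points.

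In case (I) the same bookkeeping should show that $\ge A_3$ makes the coefficient $\ell_3$ of $x_1^3$ in $F^{(3)}$ vanish identically, and that $\ge A_4$ imposes $\ell_2^2 = 4c_4 A$, where $\ell_2$ is the coefficient of $x_0x_1^2$ in $F^{(3)}$ and $c_4$ the coefficient of $x_1^4$ in $F^{(4)}$. If $c_4=0$ this gives $\ell_2=0$, whence every monomial of $F$ is divisible by $x_0$, again contradicting irreducibility; if $c_4\neq 0$ then $A=\rho^2$ is a perfect square (with $\rho\neq 0$, as $\ell$ is exactly double) and $F$ rewrites as
\[
F = M^2 + x_0 N,\qquad M = \rho\,x_0 + \sqrt{c_4}\,x_1^2,
\]
with $N$ a cubic lying in the ideal $(x_0,x_1)$. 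Since $Q\cap V(x_0) = \ell$, every point of $\sing(Q)\setminus\ell$ lies where $x_0\neq 0$; dehomogenising by $x_0=1$ turns $F$ into $f=\mu^2+g$, with $\mu$ affine-linear in $x_2,x_3$ apart from an $x_1^2$ summand and $g$ of degree $\le 1$ in $x_2,x_3$. Eliminating $\mu$ between $\partial f/\partial x_2=0$ and $\partial f/\partial x_3=0$ yields a single equation that is affine-linear in $x_1/x_0$; hence all points of $\sing(Q)\setminus\ell$ share the same value of $x_1/x_0$ and so lie in one plane through $\ell$, contradicting the existence of two singular points not coplanar with $\ell$.

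The step I expect to be the main obstacle is the sub-case where this last elimination is vacuous. Normalising $\rho=x_2$, this happens precisely when the two linear forms occurring in $N$ lie in $\langle x_2\rangle$, in which case $f$ does not involve $x_3$, so $Q$ is a cylinder (in particular ruled). One then has to check directly that the discriminant of $f$, viewed as a quadratic polynomial in $x_2$, is a genuine cubic in $x_1$ — hence not a square, so $Q$ stays irreducible — whose unique repeated root again picks out a single plane through $\ell$ containing all of $\sing(Q)\setminus\ell$. Collecting cases (I), (II) and this degenerate sub-case completes the argument.
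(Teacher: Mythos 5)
Your argument is correct in substance but follows a genuinely different route from the paper's. The paper uses the hypothesis immediately: it normalises the two non-coplanar singular points to $(1:0:0:0)$ and $(0:1:0:0)$, which kills the monomials $x_0^3,x_0^4,x_0^3x_1,x_1^3,x_1^4,x_0x_1^3$ and leaves the six-term normal form \eqref{eq-dlquartic}. With the cubic part forced to be divisible by $x_0x_1$ and the quartic part equal to $f_{2,2}x_0^2x_1^2$, the conditions ``not $A_1$'', ``not $A_2$'', ``worse than $A_3$'' telescope in three lines into ``$F$ is a perfect square'', contradicting irreducibility. You instead work with the general quartic double along $\ell$, classify $F^{(2)}$ into the two normal forms (I) and (II), and only invoke the hypothesis at the very end of case (I): in case (II) and in the sub-case $c_4=0$ of (I) you get reducibility from $k\ge 4$ alone, while in the remaining sub-case you show $k\ge4$ confines $\sing(Q)\setminus\ell$ to a single plane through $\ell$. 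Your version is longer and more computational, but it localises exactly where the two non-coplanar singular points are needed and yields the sharper byproduct that in case (II) irreducibility alone bounds $k$; the paper's version buys brevity at the price of entangling the hypothesis with the normal form from the start.

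One step of yours needs a small repair: in the degenerate (cylinder) sub-case the discriminant $\Delta(x_1)$ of $f$ as a quadratic in $x_2$ need not be ``a genuine cubic'' --- its $x_1^3$-coefficient is $4\sqrt{c_4}\,b-4f_{1,3}$ and can vanish. What you actually need, and what does hold, is only that $\Delta$ is a nonzero non-square (if $\Delta$ were a square, $f=(x_2+p/2)^2-\Delta/4$ would factor, contradicting the standing irreducibility of $Q$), and that a non-square polynomial of degree at most $3$ has at most one repeated root; that single repeated root then pins $\sing(Q)\setminus\ell$ into one plane through $\ell$ exactly as you intend. Also note the logical direction there: irreducibility is an assumption used to rule out $\Delta$ being a square, not a conclusion to be salvaged.
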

\begin{proof}
We can assume that  $\ell := V(x_0,x_1)$ is a line of double points on $Q$  and $(1:0:0:0)$, $(0:1:0:0)$  are singularities of $Q$.
 Then the quartic $Q$ is given by the vanishing of 
\begin{equation} \label{eq-dlquartic}
f_{2,0} x_0^2+f_{1,1} x_0x_1+f_{0,2}x_1^2+ f_{2,1} x_0^2x_1+f_{1,2}x_0x_1^2+f_{2,2}x_0^2x_1^2
\end{equation}
with $f_{i,j}\in \C[x_2,x_3]$ of degree $(4-i-j)$, and $f_{2,0}, f_{0,2} \neq 0$.

Suppose that the transversal type of $Q$ along $\ell$ is not $A_1$. Then 
the form $(\sum_{i+j=2} f_{i,j} x_0^i x_1^j)$
is a square, i.e. there exists a non-empty open set $U \subset \ell$ (open in the analytic topology) and $g_0, g_1 \in {\mathcal O}(U)$ such that 
\[ \sum_{i+j=2} f_{i,j} x_0^i x_1^j =  (g_0 x_0+g_1 x_1)^2 . \]
Blowing-up along $V(x_0,x_1)$ one can easily check that 
the transversal singularity is not of type $A_2$ if and only if 
\begin{equation} \label{eq-notA2-constraint}
f_{2,1} g_1 - f_{1,2} g_0 = 0 \, .
\end{equation}

Assume  that the transversal type of $Q$ along $\ell$ is $A_k$ with $k \geq 3$. Thus over an open subset of $\ell$ 
the generator  \eqref{eq-dlquartic} can be written as
\[  (g_0 x_0+g_1 x_1)^2 + x_0 x_1 g_2 (g_0 x_0+g_1 x_1) + f_{2,2}x_0^2x_1^2.\]
Let $\tilde{Q}$ be the strict transform of $Q$ and let $\sigma:\tilde{Q}\to Q$ be the blow-up map. A direct calculation shows that 
for an open subset $V \subset \ell$  the map  $\sigma|_{\sigma^{-1}(V)}:\sigma^{-1}(V)\to V$ is bijective and that at a general point of $\sigma^{-1}(\ell)$ we have a double point. The hessian of a transversal slice at a generic point
  vanishes if and only if  $(4 f_{2,2} - g_2^2)$ vanishes. This shows that  
if the transversal singularity is worse then $A_3$, then $f_{2,2} = g_2^2/4$ and \eqref{eq-dlquartic} becomes a square, which yields a  contradiction.
\end{proof}

The blow-up of $Q$ along the line $\ell=V(x_0, x_1)$ can be see as  a subset of $\Ps^3 \times \Ps^1$. If we consider the homogenous coordinates $(x_0: \ldots : x_3)$ (resp. $(y_0:y_1)$) on 
$\Ps^3$ (resp. on $\Ps^1$), then its exceptional divisor  $E_1$ is the support of the $(2,2)$-divisor on the quadric $\ell \times \Ps^1$ given by the vanishing of  
\begin{equation} \label{eq-exdiv}
 f_{2,0}(x_2,x_3) y_0^2+f_{1,1}(x_2,x_3) y_0y_1+f_{0,2}(x_2,x_3) y_1^2
\end{equation}
If $Q$ has transversal A$_1$ singularity along $\ell$  and $E_1$ is irreducible then $\chi(E_1) \in \{0,1, 2\}$ (the latter two possibilities occur when $E_1$ is singular). Otherwise 
one can easily see that either $E_1$ is union of two (different) $(1,1)$-divisors (this yields  $\chi(E_1) \in \{2,3\}$)
or $E_1$ splits into the union $(1,1) + (1,0) + (0,1)$. In the latter case we have $\chi(E_1) \in \{ 3,4\}$. Altogether, for $k=1$
we have $\chi(E_1) \in \{0, 1, 2, 3, 4\}$.   

Suppose that the transversal type of $Q$ along $\ell$ is A$_k$ with $k>1$. In this case the $(2,2)$-divisor $E_1$ is non-reduced.
If $\mbox{supp}(E_1)$ is  irreducible, then  $E_1 =  2(1,1)$ and \eqref{eq-exdiv} is square of an irreducible polynomial. \\
The other possibility is $E_1 := 2(1,0) + 2(0,1)$. This implies that \eqref{eq-exdiv} can be written as square  
of product of two linear forms \\
Finally, we can have $E_1 := 2(1,0) + (0,1)+(0,1)$. 
In this case  \eqref{eq-exdiv} 
splits into three factors. One can easily check that in all cases we have  
\begin{equation} \label{eq-pos-chi}
\chi(E_1) \geq 0
\end{equation}

In the lemmata below we maintain the assumption that $Q$ is irreducible. 
We put  $E_1$ to denote the exceptional divisor of the blow-up of $Q$ along $\ell$.
The family of the planes that contain the line $\ell$
is denoted by  $\{ W_{t} \, : \,  t \in \Ps^1 \}$, whereas 
$C_t$ stands for the (possibly reduced or non-reduced) conic residual to $2\ell$ in the hyperplane section $(W_{t}  \cap Q)$.  

\begin{lemma} \label{lem-atworstA3-bis}
Let  $Q$ be  of the transversal type A$_3$ along the line $\ell$ and let  the linear system $|{\mathcal O}_Q(1) - 2\ell|$ contain  a smooth conic. 
Moreover, assume that  $\sing(Q)$  contains 
two points  that are not coplanar with the line $\ell$.

\noindent
{\rm (1)}  If  $\mbox{supp}(E_1)$  has three components, then $Q$ has exactly two singular points away from the line $\ell$.

\noindent
{\rm (2)} If  $\mbox{supp}(E_1)$  has at most two components, then
the pencil $|{\mathcal O}_Q(1) - 2\ell|$
 contains exactly three singular conics, all of which are double lines. Moreover, every smooth member of the pencil is tangent to the line
$\ell$ and one of the three double conics is $2\ell$.
\end{lemma}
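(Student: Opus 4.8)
The plan is to analyse the pencil $\{C_t\}_{t\in\Ps^1}$ of residual conics explicitly, in the normal form of Lemma~\ref{lem-atworstA3}. So fix coordinates with $\ell=V(x_0,x_1)$, with the two given singular points of $Q$ not coplanar with $\ell$ at $(1\colon0\colon0\colon0)$ and $(0\colon1\colon0\colon0)$, and with a defining form $F$ of $Q$ of the shape \eqref{eq-dlquartic}. Parametrise $W_t$ by $(s\colon x_2\colon x_3)$ via $x_0=t_0s$, $x_1=t_1s$, so that $F|_{W_t}=s^2\Phi_t$ with $C_t=V(\Phi_t)$ and
\begin{equation*}
\Phi_t=A(t)+sB(t)+s^2\Gamma(t),\qquad A(t)=f_{2,0}t_0^2+f_{1,1}t_0t_1+f_{0,2}t_1^2,\quad B(t)=t_0t_1(f_{2,1}t_0+f_{1,2}t_1),\quad \Gamma(t)=f_{2,2}t_0^2t_1^2 .
\end{equation*}
Here $A(t)$ is the form \eqref{eq-exdiv} evaluated at $y=t$, so $C_t\cap\ell=V(A(t))$ is the fibre of $E_1\to\Ps^1$ over $t$, and $\ell\subseteq C_t$ iff $A(t)\equiv 0$ in $x_2,x_3$. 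Also, any $R\in\sing(Q)\setminus\ell$ lies on a unique $W_t$, and there $\mathrm{mult}_R(2\ell+C_t)=\mathrm{mult}_R(Q)\ge2$, which (as $R\notin\ell$) forces $R$ onto the singular locus of the conic $C_t$. Hence the singular points of $Q$ off $\ell$ are accounted for by the degenerate members of the pencil, i.e. by the $t$ with $\Delta(t)=0$, where $\Delta(t)$ is the discriminant of $C_t$ (the determinant of a Gram matrix of $\Phi_t$).

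The key step is to compute $\Delta(t)$ using the hypotheses. Since the transversal type along $\ell$ is $A_3$, the discriminant of $A(t)$ in $(t_0\colon t_1)$ vanishes identically --- equivalently \eqref{eq-exdiv} is non-reduced --- so, writing $G:=\gcd(f_{2,0},f_{1,1},f_{0,2})$ as a form in $x_2,x_3$, we have $f_{2,0}=Gm_0^2$, $f_{1,1}=2Gm_0m_1$, $f_{0,2}=Gm_1^2$, with either $\deg G=2$ and $m_0,m_1\in\C^\ast$, or $\deg G=0$ and $m_0,m_1$ linearly independent linear forms (parity excludes $\deg G=1$); moreover $\mathrm{supp}(E_1)$ has three components iff $\deg G=2$ with $G$ having distinct roots, and at most two iff $\deg G=0$ or $G$ is a square. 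The ``not $A_2$'' relation \eqref{eq-notA2-constraint} gives in addition $f_{2,1}=m_0h$, $f_{1,2}=m_1h$ for a form $h$. If $\deg G=0$ then $h$ is a constant, and in $Y:=m_0t_0+m_1t_1$, $Z:=st_0t_1$ one finds $\Phi_t=Y^2+hYZ+f_{2,2}Z^2$, which factors for all $t$; so every $C_t$ is a pair of lines, $Q$ is ruled, and $|\mathcal{O}_Q(1)-2\ell|$ has no smooth member, against the hypothesis. Hence $\deg G=2$ and $h$ is linear; with $M(t):=m_0t_0+m_1t_1$ we get $A(t)=G\,M(t)^2$, $B(t)=h\,t_0t_1M(t)$, so for $t_0t_1M(t)\ne0$, dividing by $M(t)^2$ and setting $\sigma:=st_0t_1/M(t)$,
\begin{equation*}
M(t)^{-2}\Phi_t=G(x_2,x_3)+\sigma\,h(x_2,x_3)+\sigma^2 f_{2,2},
\end{equation*}
the same conic in $(x_2\colon x_3\colon\sigma)$ for all such $t$; equivalently $\Delta(t)=c_0\,t_0^2t_1^2M(t)^4$, where $c_0$ is the discriminant of $G+\sigma h+\sigma^2f_{2,2}$. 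The smooth-member hypothesis forces $c_0\ne0$; also $M\not\equiv0$ (else $A\equiv0$), the three values $(1\colon0),(0\colon1),t_M:=V(M)$ are pairwise distinct (since $f_{2,0},f_{0,2}\ne0$), and $f_{2,2}\ne0$ (else $\Phi_{t_M}\equiv0$ and $W_{t_M}\subseteq Q$, impossible).

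It remains to identify the three degenerate conics. Since $A(t_M)=0$, $B(t_M)=0$, $\Gamma(t_M)\ne0$, one has $\Phi_{t_M}=\Gamma(t_M)s^2$, so $C_{t_M}=2\ell$; and $C_{(1:0)}=V(f_{2,0})=V(G)$, $C_{(0:1)}=V(f_{0,2})=V(G)$ are conics in $W_{(1:0)}$, $W_{(0:1)}$ with defining equation independent of $s$, hence with singular point $(1\colon0\colon0\colon0)$, resp.\ $(0\colon1\colon0\colon0)$. For part~(1): if $\mathrm{supp}(E_1)$ has three components then $G$ has distinct roots, so $C_{(1:0)}$ and $C_{(0:1)}$ are pairs of distinct lines with honest nodes at $(1\colon0\colon0\colon0)$ and $(0\colon1\colon0\colon0)$ (both off $\ell$), while $C_{t_M}=2\ell$ is singular only along $\ell$; by the first paragraph $\sing(Q)\setminus\ell\subseteq\{(1\colon0\colon0\colon0),(0\colon1\colon0\colon0)\}$, and both of these are singular points of any $Q$ of shape \eqref{eq-dlquartic}, so $Q$ has exactly two singular points off $\ell$. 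For part~(2): if $\mathrm{supp}(E_1)$ has at most two components then (by the previous paragraph) $\deg G=2$ and $G=L^2$ is a square, so $C_{(1:0)}=V(L^2)$, $C_{(0:1)}=V(L^2)$ and $C_{t_M}=2\ell$ are double lines; these are the only singular members, so the pencil has exactly three singular conics, all double lines, one of them $2\ell$; finally $A(t)=(L\,M(t))^2$ is a square of a linear form in $x_2,x_3$, so $C_t\cap\ell=V(A(t))$ is a single point of multiplicity two and every smooth member of the pencil is tangent to $\ell$. (The one-component case $\deg G=0$ is vacuous, hence covered.)

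The main work is the determinant identity $\Delta(t)=c_0\,t_0^2t_1^2M(t)^4$, i.e. the reparametrisation $\sigma=st_0t_1/M(t)$ exhibiting the generic $C_t$ as a fixed projective model: this is what simultaneously bounds the number of special fibres by three and identifies them. Getting there needs the two reductions from transversal type $A_3$ in the right order --- first that $A(t)$ is a perfect square, which splits off the gcd $G$ and produces the three-versus-at-most-two component dichotomy, then that \eqref{eq-notA2-constraint} makes $B(t)$ divisible by $M(t)$ --- followed by matching the non-vanishings ($c_0\ne0$, $M\not\equiv0$, $f_{2,2}\ne0$, distinctness of the three values of $t$) against the standing hypotheses, in particular checking that the smooth-member hypothesis is exactly what excludes the ruled case $\deg G=0$.
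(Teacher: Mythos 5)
Your proof is correct and takes essentially the same route as the paper: you work in the normal form \eqref{eq-dlquartic}, use the $A_{\ge 2}$ condition to factor the quadratic part as $G\cdot M^2$ and \eqref{eq-notA2-constraint} to align $f_{2,1},f_{1,2}$ with $M$, discard the one-component (ruled) case via the smooth-member hypothesis, and then identify the three degenerate residual conics by direct computation. The only difference is presentational — your discriminant identity $\Delta(t)=c_0\,t_0^2t_1^2M(t)^4$ packages uniformly what the paper does by normalizing coordinates separately in each case and invoking ``a direct computation.''
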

\begin{proof} We maintain the notation of the proof of Lemma~\ref{lem-atworstA3}. In particular  $\ell := V(x_0,x_1)$ and $Q$ is given by  \eqref{eq-dlquartic}.

\noindent
(1) We can assume that 
\[ \sum_{i+j=2} f_{i,j} x_0^i x_1^j =  (x_0 + x_1)^2 (a'_2 x_2 + a'_3 x_3) (a''_2 x_2 + a''_3 x_3) \]
with  $a'_i, a''_i, b_j \in \C$. Then, \eqref{eq-notA2-constraint} implies the equality  $f_{2,1} = f_{1,2}$. 
Obviously, the substitution $x_0=tx_1$ in \eqref{eq-dlquartic}) and dividing out the square $x_1^2$ yields equations of degree-$2$ divisors $C_t$ in  the pencil 
 $|{\mathcal O}_Q(1) - 2\ell|$. A direct computation shows that the pencil contains exactly three singular quadrics: for $t=-1$ we obtain $2\ell$, whereas hyperplane sections
$V(x_0)\cap Q$, $V(x_1) \cap Q$ consist of unions of three distinct lines (one of them is obviously $\ell$). In particular,  the quartic $Q$ has at most two singular points away from the line $\ell$.    

\noindent
(2) Suppose that $E_1$ is irreducible. Then we  can assume that the form $\sum_{i+j=2} f_{i,j} x_0^i x_1^j$ is the square $(x_0 x_2 + x_1 x_3)^2$. Then
\eqref{eq-notA2-constraint} implies that $Q$ is given by 
\[ (x_0 x_2 + x_1 x_3)^2 + a x_0 x_1 (x_0 x_2 + x_1 x_3) + f_{2,2} x_0^2 x_1^2 .\]
A direct computation shows that every member of the pencil  $|{\mathcal O}_Q(1) - 2\ell|$
on the resulting quartic is reducible. Moreover, one can check that the quartic in question is singular along the lines
$V(x_0, x_1)$, $V(x_1, x_2)$, $V(x_0, x_3)$.

Since the assumption that the pencil contains an irreducible conic rules out the above case
we can assume that  $E_1$  has exactly two components and 
\[ \sum_{i+j=2} f_{i,j} x_0^i x_1^j =  (x_0 + x_1)^2 (a_2 x_2 + a_3 x_3)^2 \]
As in the proof of part (a) we obtain the equality  $f_{2,1} = f_{1,2}$ and put $x_0=tx_1$ in \eqref{eq-dlquartic}. 
Again the pencil contains exactly three singular quadrics: for $t=-1$ we obtain $2\ell$, whereas hyperplane sections
$V(x_0)\cap Q$, $V(x_1) \cap Q$ contain double lines $\neq 2 \ell$.  \\
Finally, by putting $x_1 =0$ in the equation of $C_t$ one checks that
every smooth member of the pencil meets the line $\ell$  in exactly one point. 
\end{proof}

\begin{lemma}\label{lemSingLine}
Let $Q \subset \Ps^3$ be a quartic surface, such that $\Sigma$ is a line $\ell$. 
\noindent
{\rm (1)} If a residual conic $C_{t_0}$ is smooth, then 
 for at most  $8$ planes $W_{t}$  (counted with multiplicity),  the residual conic $C_{t}$ is singular.

\noindent
{\rm (2)} Assume a residual conic  $C_{t_0}$ to be smooth. 
If $P$ is a singularity of the surface $Q$ that belongs to $W_{t_1} \setminus  \ell$, then the plane $W_{t_1}$ appears with multiplicity at least two among the eight planes that exist by (1).
\end{lemma}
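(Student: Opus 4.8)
The plan is to make the pencil of residual conics $\{C_t\}$ explicit as a family of plane conics and to read both statements off the degree and the order of vanishing of its discriminant. After a coordinate change on $\Ps^3$ we may assume $\ell = V(x_0,x_1)$; since $Q$ is singular along $\ell$ its defining quartic lies in $(x_0,x_1)^2$, so
\[ F \;=\; \sum_{2\le i+j\le 4} h_{ij}(x_2,x_3)\,x_0^i x_1^j,\qquad \deg h_{ij}=4-i-j. \]
Writing a general plane through $\ell$ in an affine chart as $W_t=V(x_1-tx_0)$ and using $(x_0:x_2:x_3)$ as coordinates on $W_t\cong\Ps^2$, the substitution $x_1=tx_0$ gives $F|_{W_t}=x_0^2\,\Gamma_t$, where
\[ \Gamma_t \;=\; \sum_{i+j=2}h_{ij}t^j \;+\; x_0\!\!\sum_{i+j=3}\!\!h_{ij}t^j \;+\; x_0^2\!\!\sum_{i+j=4}\!\!h_{ij}t^j \]
is the residual conic $C_t$; it is a genuine (possibly singular) conic for every $t$ because $Q$ is irreducible, so no $W_t$ is contained in $Q$. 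Let $M(t)$ be the symmetric $3\times3$ matrix of $\Gamma_t$ in the basis $x_0,x_2,x_3$.

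For part (1) I would note that $C_t$ is singular exactly when $\det M(t)=0$, and then bound the degree: the $(x_2,x_3)$‑block of $M(t)$ has entries of degree $\le 2$ in $t$, the $(x_0,x_2)$‑ and $(x_0,x_3)$‑entries have degree $\le3$, and the $(x_0,x_0)$‑entry is $\sum_{i+j=4}h_{ij}t^j$ of degree $\le 4$; hence $D(t):=\det M(t)$ has degree $\le 8$. Since $C_{t_0}$ is smooth, $D\not\equiv0$, so after homogenising in the pencil parameter — the plane $V(x_0)$ being covered by the symmetric substitution $x_0=tx_1$ — the singular members of the pencil are the zeros of a (nonzero) binary form of degree $8$, i.e. there are at most $8$ of them counted with multiplicity.

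For part (2), by an automorphism fixing $\ell$ we may put the singular point at $P=(1:0:0:0)$ and the plane $W_{t_1}$ at $V(x_1)$, i.e. $t_1=0$ (the case $t_1=\infty$ being symmetric). A short check shows that $P\in\sing(Q)$ forces $h_{40}=h_{31}=0$ and $h_{30}\equiv0$ — these are exactly the coefficients of $x_0^4$, of $x_0^3x_1$, and of $x_0^3x_2$ and $x_0^3x_3$ in $F$. Consequently $\Gamma_0=h_{20}(x_2,x_3)+h_{30}x_0+h_{40}x_0^2=h_{20}(x_2,x_3)$ does not involve $x_0$, so the first row and column of $M(0)$ vanish; hence $\operatorname{adj}M(0)$ has all entries zero except possibly its $(x_0,x_0)$‑entry, which is $\det h_{20}$. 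On the other hand the $(x_0,x_0)$‑entry of $M(t)$ is $\sum_{i+j=4}h_{ij}t^j=h_{22}t^2+h_{13}t^3+h_{04}t^4=O(t^2)$, so the $(x_0,x_0)$‑entry of $M'(0)$ is $0$. By Jacobi's formula $\tfrac{d}{dt}\det M(t)=\operatorname{tr}\bigl(\operatorname{adj}M(t)\cdot M'(t)\bigr)$ we obtain
\[ D'(0)=\det(h_{20})\cdot\bigl(M'(0)\bigr)_{x_0x_0}=0, \]
so $D$ vanishes to order at least $2$ at $t_1$, i.e. $W_{t_1}$ occurs with multiplicity at least two among the eight planes of (1).

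The only genuine work is the bookkeeping in these two computations: establishing the degree profile of $M(t)$ for (1), and translating $\nabla F(P)=0$ into the vanishing of precisely the four coefficients $h_{40},h_{31}$ and the two coefficients of $h_{30}$ for (2). I expect the mild fiddliness of the degree count (and of the global twist needed to see that the affine discriminants assemble into one binary octic) to be the main technical point, but nothing deep is involved; in particular neither the transversal type of $Q$ along $\ell$ nor the structure of the exceptional divisor $E_1$ is needed, and the smoothness of $C_{t_0}$ enters only to guarantee $D\not\equiv0$ so that ``the eight planes'' is meaningful.
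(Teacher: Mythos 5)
Your proposal is correct and follows essentially the same route as the paper: both write the residual conics as an explicit pencil of symmetric $3\times 3$ matrices $M(t)$, bound $\deg_t\det M(t)$ by $8$ via the degree profile of the entries for part (1), and for part (2) translate $P\in\sing(Q)$ into the vanishing of the relevant coefficients so that the first row and column of $M(t_1)$ vanish to the appropriate orders, forcing $\det M$ to vanish to order at least two there. The only cosmetic difference is that you invoke Jacobi's formula where the paper reads the divisibility $t^2\mid\det A(t)$ directly off the permutation expansion.
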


\begin{proof}
Let $\ell = V(x_0,x_1)$ and let $W_t := V(x_0-tx_1)$.  Then the quartic $Q$ is given by the vanishing of 
\begin{equation} \label{eq-hypsec}
x_0^2 f_2(x_0,x_1,x_2,x_3)+ x_0x_1g_2(x_0,x_1,x_2,x_3)+x_1^2h_2(x_1,x_2,x_3)
\end{equation}
and the substitution $x_0=tx_1$ yields a generator of the ideal of  $C_{t}$:
\begin{equation} \label{eq-resCt}
 t^2 f_2(tx_1,x_1,x_2,x_3)+tg_2(tx_1,x_1,x_2,x_3)+h_2(x_1,x_2,x_3) \, .
\end{equation}

\noindent
(1)  We can assume that the conic $C_{\infty} \in Q\cap V(x_1)$ is smooth.  

\noindent
Obviously, 
the conic $C_t$ in $W_t \cong \Ps^2$ is given by  $(x_1,x_2,x_3) A(t) (x_1,x_2,x_3)^T=0$, where  
 $A(t) = [a_{i,j}(t)]_{i,j=1,\ldots, 3}$ is  a symmetric $(3\times 3)$-matrix with entries in $\C[t]$.
By \eqref{eq-resCt} the degree of an entry of $A(t)$ cannot exceed the corresponding entry of the following matrix
\[ \left( \begin{matrix} 
4 & 3 & 3 \\
3 & 2 & 2 \\
3 & 2 & 2
\end{matrix} \right)\]
Hence we have $\deg_t(\det(A(t))) \leq 8$ and, by assumption, $\det(A(t))$ is a non-zero polynomial.  
This implies that at most $8$ conics  $C_t$ are singular.

\noindent
(2) We assume that  $P:=(0:1:0:0) \in W_0$ is a singularity of the quartic $Q$. 
Then \eqref{eq-hypsec} implies $\deg_{x_1}(g_2)\leq 1$. Moreover, we have 
$\deg_{x_1}(h_2)=0$. In particular, the coefficient of $x_1^ax_2^bx_3^c$ in the polynomial \eqref{eq-resCt}
is divisible by $t^{a}$. Thus $t^2 | a_{1,1}(t)$ and $t | a_{1,j}(t)$ for $j = 2,3$. Since $A(t)$ is symmetric the 
determinant $\det(A(t))$  is divisible by $t^2$ and the proof is complete.
\end{proof}

\begin{remark} \label{rem-ap2} By generic smoothness,
if $\Sigma$ is a line, then
the residual conic $C_{t}$ is smooth away from $\ell$ for general $t$. Hence $C_t$ is either the union of two lines intersecting on $\ell$ or an irreducible conic. 
If $Q$ has an isolated singularity, then by \cite[Proposition 1.8(2)]{TopQua},  the quartic $Q$ is not ruled (by lines).  Thus the general $C_t$ is smooth
and the assumptions of Lemma~\ref{lemSingLine} are fulfilled.
\end{remark}

In the next lemma we apply similar approach to the quartics that are singular along two coplanar  lines.
\begin{lemma}\label{lemSingTwoLines} Let $\Sigma = \ell \cup \ell_1$, where $\ell$, $\ell_1$ are coplanar lines, and let $\Sigma \subset W_{t_0}$.

\noindent
{\rm (1)}  If a residual conic $C_{t_0}$ is smooth, then
at most four (counted with multiplicity) conics $C_{t}$, with $t \neq t_0$,    are singular.

\noindent
{\rm (2)}  Assume a residual conic $C_{t_1}$ to be smooth.
If $P$ is a singularity of the surface $Q$ that  belongs to $W_{t_2} \setminus  \Sigma$, then the plane $W_{t_2}$ appears with multiplicity at least two among the four planes that exist by (1).

\noindent
{\rm (3)} If a residual conic $C_{t_1}$ meets the line $\ell$ in two (distinct) points, then 
 at most two conics $C_{t}$ (counted with multiplicity), with $ t \neq t_0$,  are tangent to the line  $\ell$.
\end{lemma}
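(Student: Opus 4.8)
The plan is to adapt the proof of Lemma~\ref{lemSingLine}: encode the pencil of residual conics $\{C_t\}$ by a symmetric $3\times3$ matrix $A(t)$ with entries in $\C[t]$, so that the singular members $C_t$ correspond to the zeros of $\det A(t)$, and then bound $\deg_t\det A(t)$ from above and the multiplicity of $\det A(t)$ at $t=t_0$ from below. Since every plane $W_t$ contains $\ell$, I take $\ell=V(x_0,x_1)$ and $W_t=V(x_0-tx_1)$; after reparametrising $t$ I may assume that the plane $W_{t_0}\supset\Sigma$ is $V(x_0)$ (so $t_0=0$), and inside $V(x_0)$ I may take $\ell_1=V(x_0,x_2)$. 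Because $Q\cap V(x_0)$ is a plane quartic curve that is singular along the two distinct lines $\ell$ and $\ell_1$, it equals $x_1^2x_2^2$ up to a constant, so $Q=x_0A_3+x_1^2x_2^2$ for a cubic form $A_3$; imposing $\ell\subset\sing(Q)$ and then $\ell_1\subset\sing(Q)$ forces $A_3\in(x_0,x_1)$ and next the coefficient of $x_0x_1$ in $A_3$ to lie in $(x_0,x_2)$, so that
\[
Q=x_0^2F_2+x_0x_1x_2E_1+x_1^2x_2^2,\qquad F_2\in\C[x_0,\dots,x_3]_2,\quad E_1\in\C[x_0,\dots,x_3]_1.
\]
Substituting $x_0=tx_1$ and dividing out the factor $x_1^2$ (which is $2\ell$ on $W_t$) yields the equation of $C_t$ inside $W_t\cong\Ps^2_{x_1x_2x_3}$, namely $t^2F_2(tx_1,x_1,x_2,x_3)+tx_2E_1(tx_1,x_1,x_2,x_3)+x_2^2=0$; in particular $C_{t_0}=C_0$ is the double line $2\ell_1$, as it must be.

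For part~(1) I would read off the symmetric matrix $A(t)$ of this conic in the variables $x_1,x_2,x_3$. A direct inspection shows that the three entries involving $x_1$ have $t$-degree at most $4,3,3$, while the other three entries have $t$-degree at most $2$, whence $\deg_t\det A(t)\le 8$. On the other hand, because $C_0$ collapses to the double line $x_2^2=0$, those same entries vanish at $t=0$ to orders at least $2,1,2$ respectively, and the $(x_2,x_3)$- and $(x_3,x_3)$-entries vanish to orders at least $1$ and $2$; hence every one of the five monomials in the expansion of the symmetric determinant vanishes at $t=0$ to order at least $4$, so $\det A(t)=t^4p(t)$ with $\deg p\le 4$. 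As soon as the pencil contains a smooth member $C_{t_1}$ we have $p\not\equiv0$, and the $C_t$ with $t\neq t_0$ that are singular are exactly the roots of $p$, of which there are at most four counted with multiplicity.

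For part~(2) I would run the mechanism of Lemma~\ref{lemSingLine}(2): a point $P\in\sing(Q)\cap(W_{t_2}\setminus\Sigma)$ is an isolated singularity of $Q$ lying on $C_{t_2}$, hence a singular point of the conic $C_{t_2}$; after using the residual coordinate freedom and the reparametrisation of $t$ to normalise $P$ and $t_2$, feeding $P\in\sing(Q)$ back into the normal form of $Q$ yields divisibility of the entries of $A(t)$ by powers of $(t-t_2)$ that forces $(t-t_2)^2\mid\det A(t)$; since $\det A(t)/t^4$ has degree $\le 4$ and is coprime to $t-t_2$, the plane $W_{t_2}$ accounts for at least two of its roots. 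For part~(3) I would note that $C_t\cap\ell$ is cut out on $\ell$ by restricting the equation of $C_t$ to $\{x_1=0\}$, i.e.\ by the binary quadratic form in $x_2,x_3$ whose coefficients constitute the $2\times2$ minor of $A(t)$ on the coordinates $x_2,x_3$; these coefficients have $t$-degree $\le 2$, so their discriminant factors as $t^2q(t)$ with $\deg q\le 2$, the factor $t^2$ recording that $C_{t_0}=2\ell_1$ already meets $\ell$ in a single (doubled) point. Since $C_t$ is tangent to $\ell$ precisely when this discriminant vanishes, and the hypothesis that $C_{t_1}$ meets $\ell$ in two distinct points gives $q(t_1)\neq0$, we obtain $q\not\equiv0$ and therefore at most two values $t\neq t_0$ for which $C_t$ is tangent to $\ell$.

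The genuinely delicate point is part~(2): making the divisibility bookkeeping rigorous requires pinning down the correct normalisation of the isolated singularity $P$ relative to the normal form of $Q$ and checking that the remaining coordinate changes, together with the reparametrisation of $t$, really do let one place both $W_{t_2}$ and $P$ conveniently --- after that the count is of the same elementary kind already used in Lemma~\ref{lemSingLine}. One should also keep in mind throughout that $\det A(t)$, resp.\ $q(t)$, must be checked to be not identically zero; this is exactly what the smoothness of $C_{t_1}$, resp.\ its transversality to $\ell$, is there to provide, and it is also the place where the irreducibility of $Q$ enters.
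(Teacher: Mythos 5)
Your proof is correct and follows essentially the same strategy as the paper's: write down the normal form $x_0^2F_2+x_0x_1x_2E_1+x_1^2x_2^2$ for $Q$, encode the pencil of residual conics by a symmetric matrix $A(t)$, bound $\deg_t\det A(t)$, and extract the extra multiplicity in parts (2) and (3) from divisibility of the matrix entries exactly as in Lemma~\ref{lemSingLine}. The only real difference is that the paper places the plane $W_{t_0}\supset\Sigma$ at $t=\infty$, which gives $\deg_t\det A(t)\le 4$ outright instead of your (equivalent) degree $\le 8$ with a forced factor $t^4$; your parts (2) and (3) are as terse as the paper's own, and the divisibility bookkeeping you defer does check out (only note that in part (2) the phrase ``$\det A(t)/t^4$ \dots is coprime to $t-t_2$'' should read that the factor $t^4$ is coprime to $(t-t_2)^2$, so the double root at $t_2$ survives in the quotient).
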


\begin{proof} Let $\ell$, $W_t$, $C_t$  and $A(t)$ be as in the proof of Lemma~\ref{lemSingLine} and let $\ell_1 = V(x_1, x_2)$.   
Then $Q$ is given by
\[ x_1^2h_2(x_0, x_1, x_2, x_3) +x_0x_1x_2g_1(x_0,x_2,x_3)+(x_0x_2)^2f_0\]
and $\Sigma \subset W_{\infty} = V(x_1)$. As in  \eqref{eq-resCt}, the substitution $x_0=tx_1$ yields a generator of the ideal of  $C_{t}$:
\begin{equation} \label{eq-resCt2} 
h_2(x_1,tx_1,x_2,x_3)+tx_2g_1(tx_1,x_2,x_3)+t^2x_2^2f_0 \, .
\end{equation}

\noindent
{\rm (1)} Consider the matrix $A(t) = [a_{i,j}(t)]_{i,j=1,\ldots, 3}$ that defines $C_t$. 
One can easily check that   $\deg_t(a_{i,j}(t))$ cannot exceed the corresponding entry of the following matrix
\[ \left( \begin{matrix} 
2 & 2 & 1 \\
2 & 2 & 1 \\
1 & 1& 0\\ 
\end{matrix} \right)\]
In particular,  we have $\det(A(t)) \neq 0$ and   $\deg_t(\det(A(t))) \leq 4$, so the claim follows. 

\noindent
{\rm (2)} Assume that  $P:=(0:1:0:0) \in W_0$ is a singularity of $Q$, observe that $\deg_{x_1}(h_2)=0$ then,  and repeat almost verbatim the proof of Lemma~\ref{lemSingLine}.2.

\noindent
{\rm (3)}  Substitute $x_1=0$ in the generator \eqref{eq-resCt2} to obtain the quadratic polynomial $h_2(0,0,x_2,x_3)+t x_2 g_1(0,x_2,x_3)+t^2 x_2^2 f_2$.
Observe that its  discriminant $\in \C[t]$  has degree at most 2 (c.f. proof of Lemma~\ref{lemSingLine}.3).
 \end{proof}

In the lemma below we maintain the assumption \eqref{eq-ass}.
\begin{lemma} \label{lem-ell-on-pi}
Let the line $\ell$ be the only $1$-dimensional component of $\sing(Q)$. If the quartic surface $Q$ has two isolated singular points on a plane $W$ that contains the line $\ell$,  then   $Q$ is of transversal type A$_1$ along $\ell$. 
\end{lemma}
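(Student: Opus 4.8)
The plan is to prove the contrapositive: if $Q$ is of transversal type $A_k$ along $\ell$ with $k\ge 2$, then $Q$ has at most one isolated singular point on any plane through $\ell$, which contradicts the hypothesis. I would fix coordinates with $\ell=V(x_0,x_1)$ and $W=V(x_0)$, and write $Q=V(F)$ with $F$ as in \eqref{eq-hypsec}, say $F=x_0^2f_2+x_0x_1g_2+x_1^2h_2$ with $h_2\in\C[x_1,x_2,x_3]_2$. Since $\ell\subseteq\Sigma$, every point of $\ell$ is non-isolated in $\sing(Q)$, so any isolated singular point of $Q$ lying on $W$ has the shape $P=(0:p_1:p_2:p_3)$ with $p_1\ne0$. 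Computing the partials of $F$ and using the Euler relation for $h_2$ (exactly as in the proof of Lemma~\ref{lem-atworstA3}), one checks that such a $P$ lies in $\sing(Q)$ precisely when $(p_1:p_2:p_3)$ is a singular point of the conic $V(h_2)\subset W$ and $g_2(0,p_1,p_2,p_3)=0$; in particular $P$ is then a singular point of the residual conic $C_W$ in the quartic curve $W\cap Q=2\ell+C_W$.

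If $C_W$ is irreducible or a union of two distinct lines, then $\sing(C_W)$ consists of at most one point and there is nothing to prove. So I may assume $C_W=2M$ is a double line, $M=V(x_0,L)$ for a linear form $L=L(x_1,x_2,x_3)$. Then $M\ne\ell$ (otherwise $W\cap Q=4\ell$ has no point off $\ell$), and $M\not\subseteq\sing(Q)$, since otherwise $M$ would be a one-dimensional component of $\sing(Q)$ distinct from $\ell$, against the hypothesis; equivalently, $g_2(0,x_1,x_2,x_3)$ does not vanish identically on $M$. Hence $\sing(Q)\cap(W\setminus\ell)$ is contained in the degree-$2$ divisor $D:=V(g_2(0,\cdot))\cap M$ on $M\cong\Ps^1$.

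The decisive point, where the transversal type enters, is to show that under the assumption $k\ge2$ the intersection point $R_0:=M\cap\ell$ lies in $D$. Write $Ax_0^2+Bx_0x_1+Cx_1^2$ for the part of $F$ of degree $2$ in $x_0,x_1$, so that $A,B,C\in\C[x_2,x_3]_2$ are $f_2,g_2,h_2$ evaluated at $x_0=x_1=0$, respectively; by the proof of Lemma~\ref{lem-atworstA3}, $Q$ being of transversal type $A_k$ with $k\ge2$ means precisely that this binary form in $x_0,x_1$ is a square, i.e.\ $B^2=4AC$. Since $C_W=2M$ forces $h_2=L^2$, we have $C=\bar L^2$ with $\bar L:=L|_{x_1=0}\ne0$, whence $\bar L^2\mid B^2$ and so $\bar L\mid B$. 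Therefore $B$ vanishes at $R_0$, whose nonzero coordinates are the root of $\bar L$; that is, $g_2(0,R_0)=0$, so $R_0\in D$. As $\deg D=2$ and $R_0\in\ell$, the set $D\setminus\ell$ has at most one element, so $Q$ has at most one isolated singular point on $W$ — the desired contradiction.

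I expect the bookkeeping in the middle to be the only real obstacle: verifying carefully that two isolated singular points on $W$ force the residual conic $C_W$ to be a double line, and then that a non-$A_1$ transversal singularity along $\ell$ pushes one of the two candidate singular points of $Q$ on $M$ back onto $\ell$. The partial-derivative computation yielding the characterization of $\sing(Q)\cap(W\setminus\ell)$ is routine and is essentially already contained in the proof of Lemma~\ref{lem-atworstA3}.
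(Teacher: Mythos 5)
Your proof is correct and follows essentially the same route as the paper's: same coordinates, the same reduction showing that two isolated singularities on $W$ force the residual conic to be a double line $2M$, and the same use of the vanishing discriminant ($B^2=4AC$ with $C$ a square, hence $\bar L\mid B$) to constrain the singular points of $Q$ on $M$. The only cosmetic difference is that the paper normalizes one singular point to $(0{:}1{:}0{:}0)$ and derives the contradiction $M\subset\sing(Q)$ from the linear factor $f_{1,2}$, whereas you count the degree-$2$ divisor $D\subset M$ and show it already contains $M\cap\ell$ — the same computation packaged differently.
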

\begin{proof} We can assume that $\ell = V(x_0, x_1)$, $W = V(x_0)$,  and the point $(0:1:0:0)$ is a singularity of $Q$. Thus 
\begin{equation} \label{eq-exp-on-pi}
f = \sum_{i+j \geq 2}   f_{i,j} x_0^i x_1^j  \mbox{ where }  f_{i,j} \in \C[x_2,x_3] \mbox{ and } \deg(f_{i,j}) = 4-(i+j)  
\end{equation}
Moreover, from  $(0:1:0:0) \in \sing(Q)$, we infer that    $f_{0,3}$, $f_{1,3}$ and $f_{0,4}$ vanish.
Finally, 
the conic residual to $2 \ell$ in $W \cap Q$ has two singularities away from $\ell$, so it is a double line $\neq \ell$.
By putting $x_0=0$ into \eqref{eq-exp-on-pi}  we obtain $x_1^2 f_{0,2}$, so  $f_{0,2}$ is square of a linear form, i.e. 
$f_{0,2} = f_{0,1}^2$, and $2V(x_0, f_{0,1})$ is the conic residual to $2 \ell$ in the hyperplane section $W \cap Q$.

Suppose that the transversal type of $Q$ along $\ell$ is not A$_1$ . This yields that on an open subset $U \subset \ell$ we have (c.f. \eqref{eq-ass}) 
$$
\sum_{i+j = 2}   f_{i,j} x_0^i x_1^j  = (\tilde{f}_{1,0} x_0 + \tilde{f}_{0,1} x_1)^2  
$$
with some $\tilde{f}_{1,0}$, $\tilde{f}_{0,1} \in {\mathcal O}(U)$. In particular, 
the coefficient $f_{1,1} \in \C[x_2, x_3]$ vanishes along the line $V(x_0, f_{0,1})$.
Then, by direct computation we have  
$$
\partial f/\partial x_0 |_{V(x_0, f_{0,1})}  =  f_{1,2} x_1^2 
$$
whereas the other partials (i.e. $\partial f/\partial x_j$  for $j \geq 1$) 
vanish along the line in question. By assumption, the quartic $Q$ has two singularities on $V(x_0, f_{0,1})$ away from $\ell$,
so the degree-$1$ polynomial $f_{1,2}$ has at least two roots on  $V(x_0, f_{0,1})$. Thus all partials of \eqref{eq-exp-on-pi}
vanish along $V(x_0, f_{0,1})$ and $\Sigma \neq \ell$. Contradiction. 
\end{proof}

After those preparations we can finally give a proof of Proposition~\ref{prpLine}.
It should be emphasized that in the proof we only use results from Section~\ref{secBas},  Appendix~\ref{appPen}
and Lemma~\ref{lemm-Roneach}, so Proposition~\ref{prpLine} can be used to prove Propositions~\ref{prpIrrRuled},~\ref{prpIrrNonRuled}.

\begin{proposition}\label{prpLine}
Let $X$  be a quintic threefold that satisfies the assumptions {\rm [A0]}, {\rm [A1]} and let 
$Q$  $\in$  $|{\mathcal O}_X(1) - \Pi_0|$
be a quartic surface.
If $\Sigma$ is either a line of double points or a union of two coplanar lines of double points,  then 
\begin{equation} \label{eq-maininequality}
\chi(Q)+\epsilon\leq 20.
\end{equation}
\end{proposition}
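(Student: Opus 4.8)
The plan is to read $\chi(Q)$ off the pencil of plane sections of $Q$ through $\ell$ and to bound $\epsilon(Q)=s(Q)+8r(Q)$ by controlling the degenerate members of the induced pencil of residual conics $C_t\in|{\mathcal O}_Q(1)-2\ell|$. Since $\sing(X)\cap\Pi_0$ has exactly four points, $s(Q)\le 4$ always, so it suffices to prove $\chi(Q)\le 16$ when $r(Q)=0$, $\chi(Q)\le 8$ when $r(Q)=1$, and to treat $r(Q)=2$, which by Lemma~\ref{lemm-Roneach} (every triple point of $X$ on $Q\setminus\Pi_0$ lies on every component of $\Sigma$) together with Proposition~\ref{prpBasic}.\ref{p31-two} can occur only in the one-line case. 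First I would dispose of the case where $Q$ is ruled: then $r(Q)=0$ and $s(Q)=4$ by Lemma~\ref{lemPosTriple}, and the pencil $|{\mathcal O}_Q(1)-2\ell|$ has reducible generic member (as in the proof of Proposition~\ref{prpIrrRuled}), whence $\chi(Q)=4$ and $\chi(Q)+\epsilon(Q)=8$. So I may assume $Q$ is not ruled; by Remark~\ref{rem-ap2} the generic $C_t$ is then a smooth conic, which is exactly the setting in which Lemmata~\ref{lemSingLine},~\ref{lemSingTwoLines} apply (taking $\ell$ to be one of the two lines, with $\Sigma\subset W_{t_0}$, in the two-line case).

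The central identity is $\chi(Q)=\chi(\ell)+\chi(Q\setminus\ell)=\chi(\ell)+\sum_{t\in\Ps^1}\chi(C_t\setminus\ell)$, valid because $\ell$ is the base locus of the pencil restricted to $Q$, so that $Q\setminus\ell\to\Ps^1$ is a morphism. A short case check gives $\chi(C_t\setminus\ell)\in\{0,1,2\}$ for every $t$, the value being $0$ exactly when $C_t$ is a smooth conic transverse to $\ell$ (in particular for the generic member in all but one configuration) and $2$ exactly when $C_t$ is a pair of distinct lines meeting on $\ell$. Hence $\chi(Q)=2$ plus a sum of at most $2$'s over the finitely many ``special'' $t$, namely those for which $C_t$ is singular or fails to meet $\ell$ transversally. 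For $r(Q)=0$ this essentially suffices: by Lemma~\ref{lemSingLine}.2 (resp. Lemma~\ref{lemSingTwoLines}.2) each $P_i$ forces the plane through $\ell$ and $P_i$ to count with multiplicity $\ge 2$ in the bound of $8$ (resp. $4$) singular conics of Lemma~\ref{lemSingLine}.1 (resp. Lemma~\ref{lemSingTwoLines}.1), which leaves almost no room for other singular members, and a singular conic through a node of $Q$ off $\ell$ has its node off $\ell$ and so contributes only $1$. The degenerate transversal types are handled separately: if $Q$ has two isolated singular points not coplanar with $\ell$, Lemma~\ref{lem-atworstA3} bounds the transversal type by $A_3$ and Lemma~\ref{lem-atworstA3-bis} then describes the pencil completely (either exactly two isolated singular points off $\ell$, or exactly three singular conics, all double lines, with $2\ell$ among them and every smooth member tangent to $\ell$); if instead two isolated singular points lie on a common plane through $\ell$, Lemma~\ref{lem-ell-on-pi} forces transversal type $A_1$. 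In each subcase $\chi(Q)$ stays small, and the blow-up relation $\chi(Q)=\chi(\tilde Q)-\chi(E_1)+\chi(\ell)$ together with $\chi(E_1)\ge 0$ from \eqref{eq-pos-chi} absorbs the residual bookkeeping.

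For $r(Q)\ge 1$ the decisive extra input is the following. Since the triple point $R$ of $X$ lying on $\ell$ (resp. on $\ell\cap\ell_1$) is not on $\Pi_0$, the line $\ell$ is not contained in $\Pi_0$, so $p_0:=\ell\cap\Pi_0$ is a single point, which — being on the double line $\ell$ — is an isolated double point of the plane quartic $\Pi_0\cap Q$; a point count via Proposition~\ref{prpBasic}.\ref{p31-two} then shows $\Pi_0\cap Q$ carries enough isolated double points (the $P_i$, together with the intersection points of the double line(s) with $\Pi_0$) to force it to be reducible in a controlled way — in the generic situation the union of two lines $m_1,m_2$ and a conic, with $p_0=m_1\cap m_2$, whence $P_1,\dots,P_4$ split into two pairs, each spanning a line through $p_0$ with one of $m_1,m_2$. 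This pins down the two planes through $\ell$ along which the pencil degenerates into double lines, forces $R$ to lie on every $C_t$ (so the generic-fibre contribution is $0$), and, fed back into the counts of Lemmata~\ref{lemSingLine},~\ref{lemSingTwoLines}, simultaneously keeps $\chi(Q)$ small and forbids the $P_i$ from being non-nodal, so that $s(Q)$ is small as well. Summing the contributions subcase by subcase yields $\chi(Q)+\epsilon(Q)\le 20$, and reveals that equality is attained already for $r(Q)=1$ with the $P_i$ nodal, so there is no slack to lose.

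The step I expect to be the main obstacle is precisely this $r(Q)\ge 1$ analysis: establishing the forced decomposition of $\Pi_0\cap Q$ and then translating it, together with parts (2) and (3) of Lemmata~\ref{lemSingLine},~\ref{lemSingTwoLines}, into a genuine reduction of the number and type of admissible degenerate conics — tight enough that the worst conceivable configuration (several non-nodal points on $\Pi_0$ together with a triple point on $\ell$) is excluded rather than merely bounded. Because the target inequality is attained, a single miscounted fibre or an overlooked way a conic can meet $\ell$ would break the argument, so the delicacy lies in keeping every Euler-characteristic contribution exact.
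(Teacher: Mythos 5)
Your overall strategy --- sweeping $Q$ out by the pencil of planes through $\ell$, bounding $\chi(Q)$ fibrewise via $\chi(C_t\setminus\ell)\in\{0,1,2\}$, and controlling degenerate members with Lemmata~\ref{lemSingLine},~\ref{lemSingTwoLines},~\ref{lem-atworstA3},~\ref{lem-atworstA3-bis},~\ref{lem-ell-on-pi} --- is close in spirit to the paper's, but several steps do not go through as written. Most seriously, the case $r(Q)=2$ is never actually handled: there $\epsilon=s+8r\geq 16$, so you would need $\chi(Q)\leq 4$, which your fibrewise count cannot deliver. The paper instead \emph{excludes} $r=2$: the two triple points of $X$ on $\ell$ force the quadratic coefficients $f_{i,j}$ ($i+j=2$) in \eqref{eq-dlquartic} to be proportional to a fixed product of two linear forms, hence force a member of $|{\mathcal O}_Q(1)-2\ell|$ containing $\ell$ as a component, while the four points $P_1,\dots,P_4$ (none on $\ell$, no line $\langle P_i,P_j\rangle$ coplanar with $\ell$ by Proposition~\ref{prpBasic}.\ref{p31-four}) each contribute a distinct singular member of multiplicity $\geq 2$ by Lemma~\ref{lemSingLine}.2 --- a total of $9>8$, contradicting Lemma~\ref{lemSingLine}.1. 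Nothing in your sketch replaces this. Second, for $r=1$ you need $\chi(Q)\leq 12-s$, and the argument you offer (forcing $\Pi_0\cap Q$ to split as two lines plus a conic and ``pinning down two planes along which the pencil degenerates into double lines'') contains an unjustified step: the line components $m_1,m_2$ of $\Pi_0\cap Q$ give planes $\langle\ell,m_i\rangle$ in which the residual conic merely \emph{contains} $m_i$; there is no reason for it to be a double line, and the resulting bookkeeping is not carried out. The paper avoids all of this with a global computation for transversal type $A_k$, $k\leq 2$: $\chi(\tilde{Q})=\chi(\tilde{Q}_g)-\mu(\tilde{Q})=12-\mu(\tilde{Q})$ for the blow-up along $\ell$, combined with $\chi(E_1)\geq 0$ from \eqref{eq-pos-chi} and the count $s_1\geq r+s-2$ of non-nodal isolated singularities, which yields $\chi(Q)+\epsilon\leq 16+7r-s_0-s_1$ and closes the case without any fibre-by-fibre analysis.

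Third, your fibrewise identity needs, besides the bound on singular members, a bound on the number of members \emph{tangent} to $\ell$, since each such member still contributes $+1$. Lemma~\ref{lemSingLine} contains no such statement (only Lemma~\ref{lemSingTwoLines}.3 does, in the two-line case), and when the transversal type along $\ell$ is $A_k$ with $k\geq 2$ the form \eqref{eq-exdiv} is a square, so \emph{every} member is tangent to $\ell$ and your baseline ``generic contribution $0$'' fails; this is exactly the regime you defer to ``separate handling'' without supplying it. A smaller point: disposing of the ruled case by citing the reducible-generic-fibre argument of Proposition~\ref{prpIrrRuled} is not available here --- that argument concerns the degree-$3$ cases of Proposition~\ref{prpClass}, whereas for $\Sigma=\ell\cup\ell_1\subset\Pi_0$ a ruled $Q$ may have irreducible generic member in $|{\mathcal O}_Q(1)-2\ell|$; the paper handles this by showing that at least one of the two pencils $|{\mathcal O}_Q(1)-2\ell|$, $|{\mathcal O}_Q(1)-2\ell_1|$ contains a smooth conic and applying Lemma~\ref{lemSingTwoLines} to that one. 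Since, as you note yourself, the target inequality is attained with equality, these unquantified contributions are not slack that can be absorbed; as it stands the proposal has genuine gaps.
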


\begin{proof}
Let $\ell \subset \Sigma$ be a line   and let  $Q$ be of transversal type $A_k$ along $\ell$.

At first we assume  that  $\Sigma = \ell$. By Remark~\ref{rem-ap2} the pencil  $|{\mathcal O}_Q(1) - 2\ell|$ always contains  a smooth conic.

 If $\ell \subset \Pi_0$, then by Proposition~\ref{prpBasic}  the line $\ell$ contains at most two triple points of $X$, so $Q$ has two singularities on $\Pi_0 \setminus \ell$.
 Thus  Lemma~\ref{lem-ell-on-pi} yields $k =1$ and Lemma~\ref{lemm-Roneach} gives $r=0$. 
Otherwise,   $\ell$ runs through at most one triple point of the quintic threefold on $\Pi_0$,  so $Q$ has at least three double points on $\Pi_0$ away from $\ell$ 
and Lemma~\ref{lem-atworstA3} implies  $k\leq 3$.

At first we exclude the case $r=2$. Let $P_5,P_6 \in \sing(X)$ be triple points of $Q$. By Lemma~\ref{lemm-Roneach} we have $P_5$, $P_6 \in \ell$, so $P_j \notin \ell$ for $j \leq 4$ (see Proposition~\ref{prpBasic}.\ref{p31-two}). \\
We can assume that $\ell=V(x_0,x_1)$ , the quartic $Q$ is given by \eqref{eq-dlquartic}, and $P_5=(1:0:0:0)$, $P_6=(0:1:0:0)$. By direct check this implies that $f_{i,j} = a_{i,j} x_2 x_3$ for $i+j=2$
and  $((V(x_0-tx_1) \cap Q) - 2\ell)|_{\ell}$ is given by $((a_{2,0}t^2+a_{1,1}t+a_{0,2}) \cdot x_2 x_3)$. 
Thus the line $\ell$ is always a component of a singular conic in the pencil $|{\mathcal O}_Q(1) - 2\ell|$. 
On the other hand, the  points $P_1,\dots,P_4$ do not belong  to $\ell$, and  no line $\langle P_i, P_j \rangle$, where $i,j \leq 4$, is coplanar with $\ell$ by Proposition~\ref{prpBasic}.\ref{p31-four},
so the linear system  $|{\mathcal O}_Q(1) - 2\ell|$  contains four singular conics, each of which is singular at one of the points $P_1, \ldots, P_4$.
 Altogether we found $5$ singular members of  $|{\mathcal O}_Q(1) - 2\ell|$,
which is impossible by Lemma~\ref{lemSingLine}. This contradiction shows that $r \leq 1$.

We  assume $k \leq 2$, consider a general quartic $G\in \mbox{I}(\ell)^2$ and put  $Q_g:=V(G)$. Let $\tilde{{Q}}_g$ be the blow-up of $Q_g$ along the line $\ell$.
Since $k \leq 2$,  
the surface $\tilde{{Q}}_g$ has only isolated singularities.  
Moreover,  Bertini implies that the quartic $Q_g$ is smooth away from $\ell$. Finally, by a direct  calculation the blow-up $\tilde{Q}_g$ is smooth along the exceptional divisor 
 for sufficiently general $G$.  The pencil  $|{\mathcal O}_{Q_g}(1) - 2 \ell|$  defines a rational map $(Q_g\setminus \ell) \to \Ps^1$ which can be extended to a morphism $\tilde{Q_g}\to \Ps^1$. 
The generic fiber is an irreducible conic
and, as in the proof of Lemma~\ref{lemSingTwoLines}.1,
one checks that for sufficiently general $G$
there are exactly $8$ singular fibers, each of which is a union of two lines.
Thus we can choose $G$ such that $\tilde{Q_g}$ is smooth and $$\chi(\tilde{Q_g}) =  2 \chi(\Ps^1)+8=12.$$

We put $\tilde{Q}$ (resp. $\sigma: \tilde{\Ps}^3 \rightarrow \Ps^3$) to denote the blow-up of $Q$ (resp. $\Ps^3$) along the line $\ell$. The exceptional divisor of $\sigma$ is denoted by $E$.
As argued above, the birational morphism $\tilde{Q}\to Q$ replaces the line $\ell$ 
 with a curve $E_1$ such that  $\chi(E_1)\geq 0$ (see \eqref{eq-pos-chi}). Hence
\[ \chi(Q)=\chi(\tilde{Q})+\chi(\Ps^1)-\chi(E_1) \leq 2+\chi(\tilde{Q})\]
Moreover, since $\tilde{Q}$, $\tilde{Q}_g \in |\sigma^{*}\mathcal{O}_{\Ps^3}(4)-2E|$, the surface $\tilde{Q}_g$ is smooth and $\tilde{Q}$ has only isolated singularities, we can repeat  the proof of  \cite[Corollary 5.4.4]{Dim} to obtain the equality 
$$\chi(\tilde{Q})=\chi(\tilde{Q}_g)-\mu(\tilde{Q})=12-\mu(\tilde{Q}),$$ 
where $\mu(\tilde{Q})$ denotes the sum of Milnor numbers of singularities 
of $\tilde{Q}$.

Let $s_0$ be the number of the points $P_i$ with $i \leq 4$, such that $Q$ has an (isolated) $A_1$-singularity at $P_i$, 
and let $s_1$ be the number of the isolated singular points of $\tilde{Q}$  that are  not of type $A_1$. Obviously  $\mu(\tilde{Q})\geq s_0+2s_1$. 

\noindent
By definition (see Notation~\ref{notStd}) the quartic $Q$ has $r(Q)+s(Q)$ non-nodal singularities at the triple points of the quintic threefold $X$.
Proposition~\ref{prpBasic}.\ref{p31-two} implies that  at most two of the triple points of $X$ 
are on $\ell$. This yields the inequality $s_1\geq r+s-2$ and we obtain
\[ \chi(Q)+\epsilon \leq \chi(\tilde{Q})+2+\epsilon \leq 14+\epsilon -\mu(\tilde{Q}) \leq 14+s+8r-s_0-2s_1\leq 16+7r-s_0-s_1\]
For $r=0$ we obtain \eqref{eq-maininequality}, so we can assume that $r=1$ and, by Lemma~\ref{lemm-Roneach}, at most one point $P_i$ with $i \leq 4$ belongs to $\ell$. Thus
  $s_0+s_1\geq 3$ and 
we obtain \eqref{eq-maininequality}  when $Q$ is of transversal type $A_k$ with $k \leq 2$ along $\ell$. 

Assume  that $k=3$. By Lemma~\ref{lem-ell-on-pi}  we have $\ell \not \subset \Pi_0$, so $Q$ has at least three singularities away from $\ell$. 
From Lemma~\ref{lem-atworstA3-bis}.1 we infer that the exceptional divisor $E_1$ has at most two components.  Thus Lemma~\ref{lem-atworstA3-bis}.2
implies that the generic fiber of the fibration given by the pencil  $|{\mathcal O}_Q(1) - 2\ell|$ on the variety $(Q \setminus \ell)$ has Euler number $1$ and  
the fibration  has exactly 2 singular fibers, each of  which has Euler number $1$. Thus 
$\chi(Q) = 2 + \chi(Q \setminus \ell) =  2 + 1 =3$. Since $r \leq 12$, we have 
$\chi(Q) + \epsilon  \leq 15$ and the proof of \eqref{eq-maininequality} in the case $\Sigma = \ell$ is complete.


Finally, assume that $\Sigma = \ell \cup \ell_1$,  where $\ell$, $\ell_1$ are coplanar lines.   
Suppose first that $r>0$. Then Lemma~\ref{lemm-Roneach} yields  $r=1$, i.e. $Q$ contains exactly five triple points of $X$, say $P_1$, $\ldots$, $P_5$, and the lines  $\ell$, $\ell_1$ meet in the point
$P_5$. By Proposition~\ref{prpBasic}.\ref{p31-two} the curve $\Sigma$ contains at most three triple points of $X$,  so we can assume that the points $P_3$, $P_4$ do not belong to $\Sigma$. \\
Consider the pencil $|{\mathcal O}_Q(1) - 2\ell|$. By \cite[Proposition 1.8(2)]{TopQua}, 
the quartic $Q$ is not ruled by lines, so the pencil contains a smooth conic (c.f. Remark~\ref{rem-ap2}). It has at least three singular fibers; one of them is $2\ell_1$, each of the other two contains one of the triple points $P_3$, $P_4$
(by assumption, the points $P_1$, $\ldots$, $P_4$ are coplanar, so the plane spanned by $\ell$ and $P_3$ cannot contain the point $P_4$).  By Lemma~\ref{lemSingTwoLines} there are no further singular conics in the pencil in question.

\noindent
If the pencil contains a conic that meets  $\ell$ transversally, then at most two of its members are tangent to $\ell$ (see Lemma~\ref{lemSingTwoLines}.3). Obviously $2 \ell_1$ meets the line $\ell$ in exactly one point.
 Hence $\chi(Q\setminus\ell) \leq 4$ and $\chi(Q)\leq 6$.


\noindent
 Otherwise, every member of the pencil is tangent to $\ell$,  so each singular fiber is double line (recall that each singular fiber contains a double point $\not \in \ell$) and its Euler number coincides with the one of a smooth fiber (i.e. it is $1$). Then we obtain the equalities 
$\chi(Q\setminus \ell)=2$ and $\chi(Q)=4$. Since $r=1$ and $\epsilon \leq 12$, the inequality \eqref{eq-maininequality} follows in both cases.

Assume  $r=0$. We claim that either  the pencil   $|{\mathcal O}_Q(1) - 2\ell|$  or  $|{\mathcal O}_Q(1) - 2\ell_1|$ contains a smooth conic.
Indeed, suppose that no  member of  $|{\mathcal O}_Q(1) - 2\ell|$ is a smooth conic. Then $Q$ is ruled (by lines) and,
by the description 
given in \cite[$\S$~3.2.6]{TopQua}, every line on $Q$ meets $\ell$. Moreover,  
 only finitely many lines on $Q$ meet the line $\ell_1$ (indeed, each such line runs through the point $\ell \cap \ell_1$), so the pencil 
 $|{\mathcal O}_Q(1) - 2\ell_1|$ does contain a smooth conic. \\
Finally,  as before  we can apply Lemma~\ref{lemSingTwoLines} to show that  $\chi(Q)\leq 9$. Since $r=0$ we have $\epsilon \leq 4$
and the proof of \eqref{eq-maininequality}  in the case  $\Sigma = \ell \cup \ell_1$ is complete.
\end{proof}

\noindent
{\bf Acknowledgement:}
This project was partially carried out during ``Workshop on local negativity and positivity on algebraic surfaces" at the Institute of Algebraic Geometry, Leibniz Universit\"at Hannover. We thank Matthias Sch\"utt, Roberto Laface and    Piotr Pokora for very good working conditions.  The authors would  like to thank Duco van Straten and John Ottem for discussions on quintic threefolds.

\bibliographystyle{plain}

\bibliography{quintic-26aug}

\end{document}